\date{}
\def\BState{\State\hskip-\ALG@thistlm}
\newlist{casess}{enumerate}{1}
\setlist[casess]{label=     \textbf{Case} \arabic*:}
\DeclarePairedDelimiter\floor{\lfloor}{\rfloor}
\newcommand*{\rom}[1]{\expandafter\@slowromancap\romannumeral #1@}
\patchcmd{\ttlh@hang}{\parindent\z@}{\parindent\z@\leavevmode}{}{}
\patchcmd{\ttlh@hang}{\noindent}{}{}{}
\definecolor{mygreen}{RGB}{28,172,0} 
\definecolor{mylilas}{RGB}{170,55,241}
\newlist{Assumptions}{enumerate}{1}
\setlist[Assumptions]{label=     \textbf{Assumption} \arabic*:}
\newsavebox{\@brx}
\newcommand{\llangle}[1][]{\savebox{\@brx}{\(\m@th{#1\langle}\)}%
  \mathopen{\copy\@brx\kern-0.5\wd\@brx\usebox{\@brx}}}
\newcommand{\rrangle}[1][]{\savebox{\@brx}{\(\m@th{#1\rangle}\)}%
  \mathclose{\copy\@brx\kern-0.5\wd\@brx\usebox{\@brx}}}
\titleformat{\subsection}[runin]
       {\normalfont\bfseries}
       {\thesubsection}
       {0.5em}
       {}
       [.]
 \newtheorem{thm}{Theorem}[section]
 \newtheorem{cor}[thm]{Corollary}
 \newtheorem{lem}[thm]{Lemma}
 \newtheorem{prop}[thm]{Proposition}
 \theoremstyle{definition}
 \newtheorem{defn}[thm]{Definition}
 \theoremstyle{remark}
 \newtheorem{rem}[thm]{Remark}
 \newtheorem{ass}[thm]{Assumption}
 \newtheorem{ex}[thm]{Example}
 \numberwithin{equation}{section}
\numberwithin{equation}{section}
\def\bu{\bullet}
\newcommand{\A}{\mathfrak{A}}
\newcommand{\B}{\mathfrak{B}}
\newcommand{\X}{\mathcal{X}} 
\newcommand{\C}{\mathbb{C}}
\def\BofH{\mathbb B(\mathcal H)}
\def\N{\mathbb{N}}
\def\T{\mathbb{T}}
\def\Z{\mathbb{Z}}
\def\H{\mathcal H}
\def\R{\mathbb{R}}
\def\Z{\mathbb Z}
\def\e{{\sf e}}
\DeclareMathOperator{\Span}{Span}
\DeclarePairedDelimiterX{\inp}[2]{\langle}{\rangle}{#1, #2}
\newcommand*\bigcdot{\mathpalette\bigcdot@{.5}}
\newcommand*\bigcdot@[2]{\mathbin{\vcenter{\hbox{\scalebox{#2}{$\m@th#1\bullet$}}}}}
\def\d{{\rm d}}
\def\CC{\mathbb C}
\def\<{\langle}
\def\>{\rangle}
\providecommand{\norm}[1]{\lVert#1\rVert}
\numberwithin{equation}{section}
\begin{document}

\title{Harmonic analysis and automatic continuity in the context of generalized differential subalgebras}

\author{Felipe I. Flores
\footnote{
\textbf{2020 Mathematics Subject Classification:} Primary 46K05, Secondary 46H30, 46H40.
\newline
\textbf{Key Words:} generalized differential subalgebra, abstract harmonic analysis, functional calculus, minimal ideals, Wiener property, automatic continuity.}
}

\maketitle

\begin{abstract}\setlength{\parindent}{0pt}\setlength{\parskip}{1ex}\noindent
 For appropriate parameters $k,p,q$, we introduce and systematically study the class of $(k,p,q)$-differential subalgebras. This is a vast class of Banach $^*$-algebras defined by their relation with their $C^*$-envelopes. Some examples are given by normable two-sided $^*$-ideals, domains of closed $^*$-derivations, full Hilbert algebras, and some weighted convolution algebras of various kinds. We prove that this class of algebras possesses various interesting properties, such as closedness under a functional calculus based on smooth functions, $^*$-regularity, Wiener's property $(W)$, and properties of automatic continuity. 
\end{abstract}

\tableofcontents

\section{Introduction}

Let $(\A,\norm{\cdot}_{\A})$, $(\B,\norm{\cdot}_{\B})$ be Banach $^*$-algebras, such that $\A$ is a Banach $^*$-subalgebra of $\B$. We say that $\A$ is a {\it differential subalgebra} of $\B$ if there is a constant $C>0$ such that 
\begin{equation*}
    \norm{ab}_{\A}\leq C(\norm{a}_{\A}\norm{b}_{\B}+\norm{a}_{\B}\norm{b}_{\A}),\quad \textup{ for all } a,b\in\A.
\end{equation*}

The concept of differential subalgebras appeared first in the articles \cite{BC91,KS94}. This property gained some popularity after its appearance in the articles by Gr\"ochenig and Klotz \cite{GK13,GK14}, where it was used to provide norm bounds on the holomorphic functional calculus of the algebra. More concretely, they prove that, if $\A$ is a unital differential subalgebra of a $C^*$-algebra $\B$, then $\A$ admits \emph{norm-controlled inversion} in $\B$. This property can be viewed as a quantitative version of Wiener's Tauberian theorem (see \cite{Wi32}), as we now explain. Namely, $\A$ admits norm-controlled inversion in $\B$ if whenever an element $a\in \A$ is invertible in $\B$, then $a^{-1}$ lies in $\A$ and, moreover, there exists a function $h:\R^+\times \R^+\to \R^+$ that controls the norm of $a^{-1}$, in the sense that 
$$
\norm{a^{-1}}_\A\leq h\big(\norm{a}_\A,\norm{a^{-1}}_\B\big).
$$

Following the breakthrough of Gr\"ochenig and Klotz, various authors have studied examples of inclusions $\A\subset \B$ satisfying the `generalized differential subalgebra condition of order $\theta$' \cite{SaSh19,FS20,ShSu20,FSSCT23}, greatly extending their results. More precisely, given the parameter $\theta\in(0,1)$ and algebras $\A,\B$ as above, the generalized differential subalgebra condition of order $\theta$ is satisfied if there exists a constant $C>0$ such that
\begin{equation}\label{gdiff}
    \norm{a^2}_{\A}\leq C\norm{a}_{\A}^{1-\theta}\norm{a}_{\B}^{1+\theta},\quad \textup{ for all } a\in\A.
\end{equation}
In the future and for the sake of simplicity, we will avoid referencing the parameter $\theta$ and simply refer to these algebras as `generalized differential subalgebras'.

Much of the work on generalized differential subalgebras has been motivated by the goal of obtaining a quantitative version of Wiener's Tauberian theorem. In this regard, we can cite the works of Sun \cite{Su05,Su07,Su08}, of Samei and Shepelska \cite{SaSh19}, of Shin and Sun \cite{ShSu20}, of Fang and Shin \cite{FS20}, and of Fang, Shen, Shin and Tao \cite{FSSCT23}. Many of these works focus on concrete algebras -often matrix algebras-, and have very concrete, practical applications in mind. For example, the works \cite{Su05,Su07} cite motivations coming from the theory of Gabor frames, non-uniform sampling, numerical analysis and the study of pseudodifferential operators.

On the more abstract side, generalized differential subalgebras have played a prominent role in Lafforgue's celebrated work on the rapid decay property and the Baum-Connes conjecture \cite{La00,La02}, and more recently in the study of the K-theory of $L^p$-operator algebras by Austad, Ortega and Palmstr\o m \cite{AOP25}.

However, recent works of Shin and Sun \cite{ShSu20} and of the author \cite{Fl24} have provided new examples of algebras satisfying more general conditions of a differential type. Both examples will be properly introduced in the next section, but let us remark now that the example in \cite{ShSu20} is a noncommutative algebra inspired by Beurling algebras over $\R^d$, whereas the example in \cite{Fl24}, here denoted $\mathfrak E$, is constructed as a weighted (Beurling type) subalgebra of a generalized convolution algebra. In any case, these algebras satisfy the following condition: \begin{equation}\label{4diff}
    \norm{a^4}_{\A}\leq C\norm{a}_{\A}^{p}\norm{a}_{{\rm C^*}(\A)}^{q},\quad \textup{ for all } a\in\mathfrak E,
\end{equation} 
where $p,q,C>0$ and $p+q=4$.

These findings mark the starting point of the present article, as they show that more general differential conditions still appear in nature. Moreover, the Beurling-type nature of these and previous examples supports the idea that the study of differential conditions can have an impact on harmonic analysis and automatic continuity. We understand that this point of view is new and, in order to pursue it, we aim to develop a theory of generalized differential subalgebras. The results within this theory not only recover many results that are already available, but we also provide, as applications, many new examples of interesting harmonic-analytic behavior. In fact, many of our results are already new in the setting of generalized differential subalgebras, or even in the more restrictive setting of differential subalgebras.

To further illustrate the last point, we mention that our results apply to Beurling algebras associated with groups of strong subexponential growth (see Subsection \ref{weighttt}), marking the first time that such results have been obtained for this class of groups. The specific nature of these results will be discussed in the next paragraphs.

Another set of results comes from the theory of automatic continuity. In particular, here we focus on the continuity of intertwining operators, as introduced by Bade and Curtis \cite{BD78}. Intertwining operators are a common generalization of algebra homomorphisms, derivations, and bimodule homomorphisms, making them a natural place to test for automatic continuity. The continuity of intertwining operators has been previously studied in \cite{BD78,Ru96,Ru98,Fl25}. 

In \cite{Fl25}, the author extended Runde's (\cite{Ru96}) criterion for the continuity of intertwining operators to the setting of generalized convolution algebras. In that article, the previously mentioned algebra $\mathfrak E$ played a central role. While those results did not fundamentally rely on $\mathfrak E$ satisfying a differential-type condition, they did depend on its harmonic-analytic structure. 

In this work, we take that observation as an indication that generalized differential subalgebras may lie at the heart of new developments in the theory of automatic continuity, both as algebras to study by themselves and as tools for studying automatic continuity in larger algebras containing them. With this in mind, we apply the harmonic-analytic techniques developed here to establish new automatic continuity results, in the spirit of \cite{Fl25,flores2025der}. We emphasize that this is the first time such results have been obtained for the class of differential subalgebras.

Let us now introduce our object of study, that is, the $(k,p,q)$-differential condition. Let $\A$ be a Banach $^*$-subalgebra of the $C^*$-algebra $\B$ such that the involution is continuous in $\A$. Given $k\in \Z^+\setminus\{1\}$ and $p,q>0$ such that $p+q=k$, we will say that $\A$ is a {\it $(k,p,q)$-differential subalgebra} of $\B$ if there exists a constant $C>0$ such that
\begin{equation*}
    \|{a^{k}}\|_{\A}\leq C\norm{a}_{\A}^p\norm{a}_{\B}^q,\quad \textup{ for all } a\in\A.
\end{equation*}
This condition clearly generalizes \eqref{gdiff} and \eqref{4diff}, making them particular examples of our study. In fact, the class of subalgebras satisfying such a condition is arguably very wide, and it includes examples of seemingly different natures. Concrete examples will be provided in the next section of the paper, which is very much devoted to such a goal, but we anticipate some of them here: normable two-sided $^*$-ideals, domains of unbounded $^*$-derivations, full Hilbert algebras, and some weighted convolution algebras of various kinds.

We remark now that one could study the same condition for general Banach algebras; there is no logical obstruction to study $\A$ and $\B$ just Banach algebras satisfying the identity above. However, we chose to keep $\B$ a $C^*$-algebra to take advantage of the rigidity of $C^*$-norms. Indeed, we hope to convince the reader that $(k,p,q)$-differential subalgebras are `$C^*$-like', and that we can derive for them many of the properties that $C^*$-algebras have. One could then say that our approach is deeply inspired by the theory of $C^*$-algebras.

The first result in this direction is the presence of a functional calculus based on smooth functions. All Banach algebras have a holomorphic functional calculus, whereas $C^*$-algebras have a functional calculus based on continuous functions. In fact, one could argue that this is the defining property that $C^*$-algebras have. It follows that, in some sense, the presence of a bigger functional calculus should suggest that the algebra is closer to a $C^*$-algebra.

In this article we will construct a functional calculus for $(k,p,q)$-differential subalgebras based on the Dixmier-Baillet construction \cite{Di60,MB79}. Other forms of functional calculi have been constructed in \cite{KS94,DLM04,Fl24,carter2025w}. In fact, Kissin and Shul'man constructed a functional calculus for differential subalgebras \cite{KS94} and our construction somehow generalizes theirs. No result like this was available for generalized differential subalgebras or beyond. 

The main difference with the functional calculus of Kissin and Shul'man is that their calculus involves all smooth functions, but, in our case and given the notable increase in generality, we can only guarantee the existence of the calculus for a smaller class of functions. We proceed to describe such functions now.

Given the real parameter $\tau\in(0,1)$, we consider the algebra $\mathfrak D_\tau$, consisting of functions that satisfy
$$
f(x)=\frac{1}{2\pi}\int_{\mathbb R} \widehat{f}(t)e^{itx}\d t,
$$ 
where $ \widehat{f}\in L^1(\R)$ and
$$
    \norm{f}_{\tau}=\int_{\R}|\widehat{f}(t)|e^{|t|^{\tau}}\d t<\infty.
$$ 
Under the norm $\norm{\cdot}_{\tau}$, $\mathfrak D_\tau$ is a Banach subalgebra of $C_0(\R)$. It is an old result of Domar that $\mathfrak D_\tau$ defines a regular commutative Banach algebra \cite{Do56}, meaning that this algebra is robust enough to separate points and closed sets in $\R$. This property makes a functional calculus based on $\mathfrak D_\tau$-functions very robust.

Our first main theorem precisely states the existence of such calculus. In it, we use $\widetilde{\A}$ to denote the minimal unitization of $\A$ and $\A(a)$ to denote the smallest closed subalgebra containing the element $a\in\A$.

\begin{thm}
    Let $\A$ be a $(k,p,q)$-differential subalgebra of the $C^*$-algebra $\B$. Then there exists a $\tau\in(0,1)$ such that, for every $a=a^*\in \A$, one has
    $$
    \norm{e^{ita}}_{\widetilde{\A}}=O(e^{|t|^\tau}),\quad\textup{ as }|t|\to\infty.
    $$ 
    Furthermore, the following are true for all $f\in\mathfrak D_\tau$.
    \begin{enumerate}
        \item[(i)] The following Bochner integral exists in $\widetilde{\A}(a,1)$: \begin{equation*}
            f(a)=\frac{1}{2\pi}\int_{\mathbb R} \widehat{f}(t)e^{ita}\d t.
        \end{equation*} Moreover, if $f(0)=0$, then $f(a)\in \A(a)$.
        \item[(ii)] For any non-degenerate $^*$-representation $\Pi:\widetilde{\A}(a,1)\to\BofH$, we have ${\Pi}\big(f(a)\big)=f\big(\Pi(a)\big)$. 
        \item[(iii)] ${\rm Spec}_{\widetilde{\A}(a,1)}\big(f(a)\big)=f\big({\rm Spec}_{\widetilde{\A}(a,1)}(a)\big)$.
        \item[(iv)] There is a (continuous) $^*$-homomorphism $\varphi_a:\mathfrak D_\tau\to \widetilde{\A}(a,1)$ defined by $\varphi_a(f)=f(a)$, such that $\varphi_a(g)=1$ if $g\equiv 1$ on a neighborhood of ${\rm Spec}_{\widetilde{\A}(a,1)}(a)$ and $\varphi_a(g)=a$ if $g(x)=x$ on a neighborhood of ${\rm Spec}_{\widetilde{\A}(a,1)}(a)$.
    \end{enumerate}
\end{thm}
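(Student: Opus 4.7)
The proof splits naturally into two stages: first establish the preliminary estimate
\[
\norm{e^{ita}}_{\widetilde{\A}}=O(e^{|t|^{\tau}})\quad\text{as }|t|\to\infty,
\]
and second deduce items (i)--(iv) from it. I view the preliminary growth estimate as the main obstacle, since one must show that a one-parameter group which is merely \emph{unitary} in the $C^{*}$-algebra $\widetilde{\B}$ has only \emph{subexponential} norm growth in the Banach $^{*}$-algebra $\widetilde{\A}$; essentially the full force of the $(k,p,q)$-condition is concentrated in this step.

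For the growth estimate, the approach is to iterate the $(k,p,q)$-differential inequality. Fix a self-adjoint $a\in\A$ and rescale so that $\norm{a}_{\B}\le 1$; the hypothesis then gives $\norm{a^{k}}_{\A}\le C\norm{a}_{\A}^{p}$. Applying the same inequality to the self-adjoint elements $a^{k^{j}}$ (which satisfy $\norm{a^{k^{j}}}_{\B}\le 1$) produces the recursion $x_{j+1}\le C x_{j}^{p}$ for $x_{j}:=\norm{a^{k^{j}}}_{\A}$, and resolving it -- together with base-$k$ decomposition of an arbitrary $n$ and repeated submultiplicativity -- yields a subexponential bound
\[
\norm{a^{n}}_{\A}\le M_{a}\,e^{c_{a}\,n^{\sigma}}
\]
for some $\sigma\in(0,1)$ depending on $k$ and $p$. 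From here a Dixmier--Baillet-type argument, in the spirit of \cite{Di60,MB79} and their continuation in \cite{KS94,Fl24}, converts this subexponential control of $(\norm{a^{n}}_{\A})_{n}$ into the asserted bound $\norm{e^{ita}}_{\widetilde{\A}}=O(e^{|t|^{\tau}})$ for a suitable $\tau\in(0,1)$. This last conversion is the delicate point, since a direct Taylor estimate only yields $O(e^{|t|})$; the improvement must exploit that $e^{ita}$ is unitary in $\widetilde{\B}$ in combination with the $(k,p,q)$-inequality.

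Granting the preliminary estimate, part (i) is routine. Since
\[
\int_{\R}|\widehat{f}(t)|\,\norm{e^{ita}}_{\widetilde{\A}}\,\d t\le K\int_{\R}|\widehat{f}(t)|e^{|t|^{\tau}}\,\d t=K\norm{f}_{\tau}<\infty,
\]
the Bochner integral defining $f(a)$ converges absolutely in $\widetilde{\A}$; each $e^{ita}$ lies in the closed unital subalgebra $\widetilde{\A}(a,1)$ as a norm-convergent power series in $a$, so the integral lies there too. For the refinement when $f(0)=0$, use $f(0)=\frac{1}{2\pi}\int_{\R}\widehat{f}(t)\,\d t=0$ to rewrite
\[
f(a)=\frac{1}{2\pi}\int_{\R}\widehat{f}(t)\bigl(e^{ita}-1\bigr)\,\d t,
\]
and observe that $e^{ita}-1=\sum_{n\ge 1}(it)^{n}a^{n}/n!$ lies in $\A(a)$, so $f(a)\in\A(a)$.

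The remaining parts follow formally. For (ii), the representation $\Pi$ is automatically norm-continuous, hence $\Pi(e^{ita})=e^{it\Pi(a)}$ and interchanging $\Pi$ with the Bochner integral yields $\Pi(f(a))=f(\Pi(a))$. For (iii), apply (ii) to the characters of the commutative Banach $^{*}$-algebra $\widetilde{\A}(a,1)$, so that $\chi(f(a))=f(\chi(a))$, and conclude via Gelfand theory. For (iv), linearity of $\varphi_{a}$ is immediate; multiplicativity follows from $\widehat{fg}=\frac{1}{2\pi}\widehat{f}\ast\widehat{g}$ via Fubini; the $^{*}$-property uses $\overline{\widehat{f}(t)}=\widehat{\overline{f}}(-t)$ together with $(e^{ita})^{*}=e^{-ita}$. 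Finally, the identifications $\varphi_{a}(g)=1$ (resp.\ $\varphi_{a}(g)=a$) when $g\equiv 1$ (resp.\ $g(x)=x$) near ${\rm Spec}_{\widetilde{\A}(a,1)}(a)$ follow by comparing the Bochner construction with the continuous functional calculus on the $C^{*}$-algebra $\widetilde{\B}$: under the continuous injection $\widetilde{\A}(a,1)\hookrightarrow\widetilde{\B}$, both calculi are given by the same integral, which reduces the claim to the standard $C^{*}$-identity.
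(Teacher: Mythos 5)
There is a genuine gap at the heart of the argument: the preliminary growth estimate $\norm{e^{ita}}_{\widetilde{\A}}=O(e^{|t|^{\tau}})$ is never actually proved. You iterate the $(k,p,q)$-inequality on the powers of $a$ itself (after normalizing $\norm{a}_{\B}\le 1$) to get $\norm{a^{n}}_{\A}\le M_{a}e^{c_{a}n^{\sigma}}$, and then defer to an unspecified ``Dixmier--Baillet-type argument'' to convert this into subexponential growth of the exponential. But no such conversion exists along that route: since $\norm{a^{n}}_{\A}\ge\norm{a^{n}}_{\B}$ and the Taylor series forces $\sum_{n}\frac{|t|^{n}}{n!}\norm{a^{n}}_{\A}\gtrsim e^{|t|\norm{a}_\B}$ whenever the coefficients are bounded below by $\norm{a}_\B^n$, any estimate on the sequence $(\norm{a^{n}}_{\A})_{n}$ alone can only yield $O(e^{c|t|})$. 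You correctly flag this as ``the delicate point,'' but flagging it is not proving it --- and the Dixmier--Baillet machinery of \cite{Di60,MB79} is what turns the growth estimate into the functional calculus (parts (i)--(iv)); it does not produce the growth estimate.

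The paper's actual mechanism is different and is where the unitarity of $e^{ina}$ in $\B$ enters quantitatively: one applies the $(k,p,q)$-inequality not to $a$ but to the elements $z=u(na)=e^{ina}-1$, for which $\norm{z}_{\B}\le 2$ uniformly in $n$, so that $\norm{z^{k}}_{\A}\le C2^{q}\norm{z}_{\A}^{p}$ with a genuine loss of exponent since $p<k$. Combining this with the algebraic identity $u(kna)=x^{k}-1=z\sum_{\alpha=0}^{k-1}(z+1)^{\alpha}=z^{k}+(\text{terms of degree}\le k-1\text{ in }z)$, one obtains for $a_{n}:=D(\norm{u(na)}_{\A}+1)$ both submultiplicativity $a_{n+m}\le a_{n}a_{m}$ and the functional recursion $a_{kn}\le a_{n}^{\gamma}$ with $\gamma=\max\{k-1,p\}$ (adjusted to exceed $1$); a $k$-adic expansion lemma then gives $a_{n}=O(e^{n^{\tau}})$ for every $\tau>\log_{k}\gamma$, and a final interpolation extends from $n\in\N$ to $t\in\R$. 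Your treatment of (i)--(iv), granting the growth estimate, is essentially the paper's (the paper defers (ii)--(iv) to \cite[Theorem 2.5]{Fl24}), with the caveat that in (iii) you should justify that the characters of $\widetilde{\A}(a,1)$ evaluate $a$ to a \emph{real} number before writing $f(\chi(a))$, since $f\in\mathfrak D_{\tau}$ is only defined on $\R$; this follows from the subexponential growth of $\norm{e^{ita}}_{\widetilde{\A}}$ itself.
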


Let us now describe the many applications this functional calculus has. We will use our functional calculus to study properties of spectral nature like symmetry, ideal-theoretic properties like $^*$-regularity, and representation-theoretic properties like the Wiener property. 

To be concrete, a reduced Banach $^*$-algebra is said to \begin{enumerate}
    \item[(i)] be symmetric if it is closed under the holomorphic functional calculus of its universal $C^*$-completion,
    \item[(ii)] be $^*$-regular if its $^*$-structure space is that of its universal $C^*$-completion (or, more precisely, the canonical map is an homeomorphism),
    \item[(iii)] have the Wiener property $(W)$ if every closed, proper ideal is annihilated by a topologically irreducible $^*$-representation.
\end{enumerate}

These properties have been widely studied in the context of group algebras and their variants. We mention, for example, the works \cite{Le73,Lu80,Ba81,Bo84,Ni99,Lo01,AuTh25} on group algebras. For weighted group algebras (Beurling algebras), see \cite{HKK83,MM98,FGLLM03,DLM04,SaSh19}. More recent generalizations involve the study of weighted $L^p$-algebras \cite{KuMB12} and even Orlicz algebras \cite{OzSa17,carter2025w}. In the setting of generalized convolution algebras, we can also mention the works \cite{Le75,Ki82,Le11,LeNg061,Fl24}. Let us remark that, while many of these articles are devoted to the study of one of the properties from above, it is also often the case that the three properties are studied together (see \cite{FGLLM03,DLM04,Fl24,carter2025w}).

Our next theorem states that $(k,p,q)$-differential subalgebras have all the properties we mentioned above.

\begin{thm}\label{main2}
    Let $\A$ be a $(k,p,q)$-differential subalgebra of the $C^*$-algebra $\B$. Then $\A$ is symmetric, $^*$-regular and has the Wiener property $(W)$.
\end{thm}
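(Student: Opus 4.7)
The plan is to deduce all three properties from Theorem~1.1 and the $(k,p,q)$-differential inequality, in the order symmetric $\to$ $^*$-regular $\to$ Wiener.

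\textbf{Step 1 (Symmetry).} For $a=a^*\in\A$, Theorem~1.1 yields $\|e^{ita}\|_{\widetilde\A}=O(e^{|t|^\tau})$ with $\tau<1$. Any character $\chi$ of the commutative unital Banach algebra $\widetilde{\A}(a,1)$ satisfies $|\chi(e^{ita})|=|e^{it\chi(a)}|=e^{-t\,\mathrm{Im}\,\chi(a)}$, and the growth bound forces $\mathrm{Im}\,\chi(a)=0$. Hence the Gelfand spectrum of $a$ in $\widetilde{\A}(a,1)$ is real, and a fortiori $\sigma_{\widetilde\A}(a)\subseteq\sigma_{\widetilde{\A}(a,1)}(a)\subset\R$. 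This is Hermitianness, equivalent to symmetry in every standard sense by the Shirali--Ford theorem. The stronger statement that $\A$ is closed under the holomorphic functional calculus of $C^*(\A)$ then follows from Theorem~1.1(iv) and Domar's regularity of $\mathfrak{D}_\tau$: for $\lambda\notin\sigma_{C^*(\A)}(a)$ one picks $f\in\mathfrak{D}_\tau$ with $f(x)=(x-\lambda)^{-1}$ on a neighborhood of $\sigma_{C^*(\A)}(a)$, and reads off from $\varphi_a$ that $\varphi_a(f)$ inverts $a-\lambda$ inside $\widetilde{\A}(a,1)$.

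\textbf{Step 2 ($^*$-regularity).} Domar's regularity of $\mathfrak{D}_\tau$ provides, for any disjoint compact/closed pair of subsets of $\R$, an interpolating function $f\in\mathfrak{D}_\tau$ with $f(0)=0$, whose image under $\varphi_a$ is a cutoff element $f(a)\in\A(a)\subset\A$ on $\sigma(a)$. Following the strategy already employed for generalized differential subalgebras (cf.\ \cite{FSSCT23}), these cutoffs are the essential ingredient to prove that the restriction map $I\mapsto I\cap\A$ is a homeomorphism between the $^*$-structure spaces of $C^*(\A)$ and $\A$, which is exactly the definition of $^*$-regularity.

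\textbf{Step 3 (Wiener) and the main obstacle.} For a closed proper $^*$-ideal $J\subsetneq\A$, $^*$-regularity forces its closure $\overline{J}\subset C^*(\A)$ to be proper; $C^*$-algebra theory then supplies a topologically irreducible $^*$-representation $\pi:C^*(\A)\to\BofH$ annihilating $\overline{J}$, and the density of $\A$ in $C^*(\A)$ implies that $\pi|_\A$ is still topologically irreducible and annihilates $J$, establishing $(W)$. The main obstacle in this program is Step~2: Step~1 is a fairly direct consequence of the growth bound in Theorem~1.1, and Step~3 is routine once Steps~1--2 are in place. The difficulty in Step~2 lies in upgrading the pointwise regularity of $\mathfrak{D}_\tau$ to a global identification of $^*$-structure spaces. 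Two technical issues deserve care: the constraint $f(0)=0$ imposed by Theorem~1.1(i), needed to keep cutoffs inside $\A$ rather than only in $\widetilde{\A}(a,1)$; and extending the separation argument from the spectrum of a single self-adjoint element to the full primitive ideal space, typically achieved through approximate units and a careful handling of the hull-kernel topology.
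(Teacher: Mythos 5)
Your Step 1 is correct and takes a genuinely different route from the paper: Theorem \ref{symmetry} proves the stronger spectral invariance ${\rm Spec}_\A(a)={\rm Spec}_\B(a)$ by feeding the iterated $(k,p,q)$-inequality into Gelfand's formula and then invoking the Barnes--Hulanicki lemma, whereas you deduce Hermitianness directly from the subexponential growth of $t\mapsto e^{ita}$ by evaluating characters of $\widetilde\A(a,1)$; both arguments are valid, and by Shirali--Ford either yields symmetry. Step 2, however, is a sketch rather than a proof: you correctly locate the difficulty in passing from regularity of $\mathfrak D_\tau$ to a statement about ${\rm Prim}_*\A$, but you never make that passage. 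The paper's route is concrete and should be named: the growth bound gives the Beurling--Domar condition $\int_\R \log(\norm{u(ta)}_{\widetilde\A})(1+t^2)^{-1}\,\d t<\infty$, so by Shilov's criterion each $\A(a)$, $a\in\A_{\rm sa}$, is regular (local regularity, Theorem \ref{locallyreg}), and a theorem of Barnes then upgrades local regularity to $^*$-regularity. Without citing or reproving a result of that strength, your Step 2 is a gap.

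The decisive error is in Step 3. You claim that $^*$-regularity forces the closure $\overline{J}$ of a proper closed ideal $J\subsetneq\A$ to be proper in ${\rm C^*}(\A)$. It does not: $^*$-regularity (indeed already $C^*$-uniqueness) controls the map $I\mapsto I\cap\A$ from closed ideals of ${\rm C^*}(\A)$ down to $\A$, and says nothing about whether a proper closed ideal of $\A$ can be dense in ${\rm C^*}(\A)$. In fact the assertion ``$\overline{J}$ is proper for every proper closed two-sided ideal $J$ of $\A$'' is \emph{equivalent} to the Wiener property (an irreducible $^*$-representation annihilating $J$ extends to ${\rm C^*}(\A)$, and conversely), so your Step 3 assumes exactly what it must prove. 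This is where the paper does its real work: it constructs the minimal ideal $j(\emptyset)$ with empty hull from the $\mathfrak D_\tau$-functional calculus (Theorem \ref{minideals}) and then uses a self-adjoint bounded approximate identity, via Lemma \ref{appidentity} and the uniform growth estimate of Remark \ref{joingrowth}, to show $j(\emptyset)=\A$, whence any closed ideal with empty hull is all of $\A$. Note that the bounded approximate identity is an explicit hypothesis of Theorem \ref{wienerprop} that your argument never uses, and that the property must be established for arbitrary closed two-sided ideals, not only $^*$-ideals. Calling this step ``routine'' inverts the actual difficulty: it is the one part of the theorem the paper identifies as new at every level of generality.
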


We should now mention that symmetry and $^*$-regularity have been established for differential subalgebras \cite{KS94,GK13} and that symmetry is also known for generalized differential subalgebras \cite{SaSh19}. The fact that the Wiener property $(W)$ holds is new at all levels of generality.

In terms of applications, this theorem immediately recovers many pre-existing results, in a unified way. For instance, when applied to $\ell^1$-algebras of Fell bundles (the setting of Subsection \ref{weightt}), one recovers all the results in \cite{Fl24} that are stated for weighted algebras of discrete groups. In particular, we recover the results of \cite{FGLLM03,DLM04} that are stated for `polynomial weights'.

Of course, we not only recover old results but also produce new ones. Observe, for example, that all the preexisting literature, with the notable exception of \cite{SaSh19}, deals with groups of polynomial growth. However, we provide new results for weighted $\ell^p$-algebras of groups of strong subexponential growth (new even in the case $p=1$) and weighted $\ell^2$-algebras of groups with rapid decay (see Subsection \ref{weightttt}). In the latter case, groups are often nonamenable.

We now describe our results regarding the automatic continuity of intertwining operators. If $\X_1,\X_2$ are weak Banach $\A$-bimodules, then a linear map $\theta:\X_1\to\X_2$ is called an \emph{$\A$-intertwining operator} if for each $b\in\A$, the maps 
$$
\X_1\ni \xi\mapsto \theta(b\xi)-b\theta(\xi)\in \X_2\quad\text{ and }\quad \X_1\ni \xi\mapsto \theta(\xi b)-\theta(\xi)b\in \X_2
$$ 
are continuous. We recall that derivations, algebra homomorphisms, and bimodule homomorphisms are natural examples of intertwining operators.

The continuity of intertwining derivations is featured in works by Bade and Curtis \cite{BC91}, Runde \cite{Ru96,Ru98} and the author \cite{Fl25}. The continuity of particular classes of intertwining operators has been the topic of many other investigations, and we mention some of them now. For example, we have the results on automatic continuity in group algebras obtained by Willis \cite{Wi92}, Runde \cite{Ru00} and Dales \cite{Da00}. 

Another interesting phenomenon is that of automatic continuity on a dense subspace or subalgebra \cite{Wi01}. Runde even stated it as a question to find dense domains of continuity inside group algebras \cite[pag. 26]{Ru94}. Since many of our examples of $(k,p,q)$-differential subalgebras are, in fact, given by algebras of Beurling type, our results can be relevant to this question.

Based on the approach outlined in \cite{Fl25}, and the fact that $(k,p,q)$-differential subalgebras are symmetric, $^*$-regular and have the Wiener property, we are able to prove the following theorems.

\begin{thm}
    Let $\A$ be a $(k,p,q)$-differential subalgebra of the $C^*$-algebra $\B$, with the property that all closed $^*$-ideals of finite codimension have a bounded left approximate identity. Let $\X$ be a weak Banach $\A$-bimodule and $\theta:\A\to\X$ an $\A$-intertwining operator. Then $\theta$ is continuous if and only if $\mathscr I(\theta)$ is closed. In particular, $\theta$ is continuous if $\X$ is a Banach $\A$-bimodule.
\end{thm}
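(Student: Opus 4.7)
The plan is to combine the harmonic-analytic properties guaranteed by Theorem \ref{main2} with the continuity framework developed in \cite{Fl25}, which extended Runde's criterion to the generalized convolution setting. The forward implication is immediate: if $\theta$ is continuous, then $\mathscr I(\theta)=\A$, which is trivially closed. So the substantive content is the converse.

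Assume $\mathscr I(\theta)$ is closed. The first step is to record what the machinery of \cite{Fl25} gives: the separating space $\mathscr S(\theta)\subseteq \X$ is closed (via a standard closed-graph argument), $\mathscr I(\theta)$ is a $^*$-ideal, and continuity of $\theta$ is equivalent to $\mathscr S(\theta)=\{0\}$. The strategy is then to upgrade ``$\mathscr I(\theta)$ closed'' to ``$\mathscr I(\theta)$ has finite codimension'', so that the hypothesis on bounded left approximate identities can be invoked. Finite codimension will be extracted from the three conclusions of Theorem \ref{main2}: symmetry lets us pass to the $C^*$-envelope for spectral purposes, $^*$-regularity identifies the $^*$-structure space of $\A$ with that of its $C^*$-envelope, and Wiener's property $(W)$ ensures that every proper closed $^*$-ideal is annihilated by some topologically irreducible $^*$-representation. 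Analyzing the action of $\A$ on the weak Banach $\A$-bimodule $\mathscr S(\theta)$ and tracking which primitive ideals can fail to contain $\mathscr I(\theta)$ then forces $\A/\mathscr I(\theta)$ to be finite-dimensional.

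Once $\mathscr I(\theta)$ is known to be a closed $^*$-ideal of finite codimension, the hypothesis supplies a bounded left approximate identity $(e_\lambda)$ for it. A Cohen-type factorization argument, applied along $(e_\lambda)$, shows that $\theta$ is continuous on $\mathscr I(\theta)$; continuity then extends to all of $\A$ via the algebraic direct sum decomposition $\A=\mathscr I(\theta)\oplus F$ with $F$ finite-dimensional (hence forced to map continuously into $\X$).

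The main obstacle will be the finite-codimension step, where the three conclusions of Theorem \ref{main2} have to interlock with the module-theoretic data to pin down primitive ideals containing $\mathscr I(\theta)$; this is precisely the $C^*$-like behaviour that motivated the $(k,p,q)$-differential formalism. Finally, the ``in particular'' clause is a direct consequence: when $\X$ is a genuine Banach $\A$-bimodule, the joint continuity of the bimodule action makes the closure of $\mathscr I(\theta)$ automatic, so the general criterion applies without further assumption.
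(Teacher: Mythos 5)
Your outline has the right global shape---reduce to the continuity criterion of \cite{Fl25}, show the continuity ideal has finite codimension, then exploit the bounded approximate identity---but it is missing two of the four hypotheses that the criterion (Theorem \ref{theprop}) actually requires, and these are exactly the ones that do \emph{not} follow from Theorem \ref{main2}. First, condition \emph{(i)} of Theorem \ref{theprop} is \emph{local} regularity: for every $a\in\A_{\rm sa}$ the singly generated commutative algebra $\A(a)$ must be regular in the Gelfand-theoretic sense. This is not the same as $^*$-regularity of $\A$ (a statement about the surjection ${\rm Prim}\,{\rm C^*}(\A)\to{\rm Prim}_*\A$); in the paper it is established separately (Theorem \ref{locallyreg}) from the subexponential growth of $\norm{e^{ita}}_{\widetilde\A}$ via the Domar--Shilov criterion. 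Second, and more seriously, the finite-codimension step cannot be extracted from symmetry, $^*$-regularity and Wiener alone. The argument of \cite{Fl25} yields $\overline{\mathscr I(\theta)}=k\big(h(\mathscr I(\theta))\big)$ \emph{only after} one knows that finite subsets of ${\rm Prim}_*\A$ consisting of finite-codimensional primitive ideals are sets of synthesis (condition \emph{(iii)} of Theorem \ref{theprop}); without synthesis one only gets the inclusion $\overline{\mathscr I(\theta)}\subseteq k\big(h(\mathscr I(\theta))\big)$, which gives no bound on the codimension. In this paper that synthesis statement is Theorem \ref{synth}, and it is itself a consequence of the bounded-approximate-identity hypothesis through the factorization Lemma \ref{barnes}---so the approximate identities enter the proof twice, not only in the final Cohen-type step you describe.

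Beyond this, your proposal explicitly defers the substantive work (``the main obstacle will be the finite-codimension step\dots'') rather than carrying it out, so as written it is a plan rather than a proof. The paper's own argument is a one-line verification: conditions \emph{(i)}--\emph{(iv)} of Theorem \ref{theprop} are supplied by Theorem \ref{locallyreg}, Theorem \ref{wienerprop}, Theorem \ref{synth}, and the standing assumption, respectively. If you prefer to run the Runde-type argument from scratch instead of citing it, you must first prove local regularity and the spectral synthesis property; the rest of your sketch (separating space, factorization along $(e_\lambda)$, the ``in particular'' clause) is consistent with what the cited machinery does.
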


\begin{thm}
    Let $\A$ be a unital $(k,p,q)$-differential subalgebra of the $C^*$-algebra $\B$. Also consider a Banach $\A$-bimodule $\X_1$, a weak Banach $\A$-bimodule $\X_2$, and an $\A$-intertwining operator $\theta:\X_1\to\X_2$. If ${\rm C^*}(\A)$ has no proper closed two-sided ideals with finite codimension, then $\theta$ is continuous.
\end{thm}

Let us now describe a notable example. If one applies the previous theorem to, say, the setting of Subsection \ref{weightttt}, one finds that every intertwining operator over the natural weighted $l^2$-algebra of the free group $\mathbb F_n$ ($n\in\N\cup\{\infty\}$) is continuous. This fact remains true if one changes $\mathbb F_n$ by any nonamenable hyperbolic group, as their reduced $C^*$-algebras do not contain proper closed two-sided ideals with finite codimension. One should note that questions about automatic continuity for $\ell^1(\mathbb F_\infty)$ are central to the theory, as they relate to the (non-)existence of discontinuous homomorphisms \cite[Theorem 4.1]{Ru00}.

\section{The class of \texorpdfstring{$(k,p,q)$}--differential subalgebras}\label{exx}

\begin{defn}
    Let $(\A,\norm{\cdot}_{\A})$ be a Banach $^*$-subalgebra of the $C^*$-algebra  $(\B,\norm{\cdot}_{\B})$ such that the involution is continuous in $\A$. Let $k\in \Z^+\setminus\{1\}$ and $p,q>0$ such that $p+q=k$. We say that $\A$ is a {\it $(k,p,q)$-differential subalgebra} of $\B$ if there exists a constant $C>0$ such that
\begin{equation}\label{kpq}
    \|{a^{k}}\|_{\A}\leq C\norm{a}_{\A}^p\norm{a}_{\B}^q,\quad \textup{ for all } a\in\A.
\end{equation}
\end{defn}

The following lemma shows the result of applying the identity \eqref{kpq} successively.

\begin{lem}
    If $\A$ is a $(k,p,q)$-differential subalgebra of $\B$. Then there exists a constant $C>0$
    \begin{equation*}
        \|a^{k^{n}}\|_\A\leq C^n\norm{a}_\A^{p^n}\norm{a}_\B^{k^{n}-p^n}
    \end{equation*}
    holds for all $a\in\A$ and $n\in \Z^+$.
\end{lem}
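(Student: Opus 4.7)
The plan is to proceed by induction on $n$. The base case $n = 1$ is immediate: with $k^1 = k$, $p^1 = p$, and $k^1 - p^1 = q$, the desired inequality reduces to the defining condition \eqref{kpq}.

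For the inductive step, the key observation is that $a^{k^n}$ is itself an element of $\A$, so the defining inequality may be applied to it. Concretely, one writes
\[
\norm{a^{k^{n+1}}}_\A \;=\; \norm{(a^{k^n})^k}_\A \;\leq\; C\,\norm{a^{k^n}}_\A^{\,p}\,\norm{a^{k^n}}_\B^{\,q},
\]
then substitutes the inductive hypothesis into the $\norm{\cdot}_\A$-factor, and uses submultiplicativity $\norm{a^{k^n}}_\B \leq \norm{a}_\B^{k^n}$ in the Banach algebra $\B$ in the $\norm{\cdot}_\B$-factor. The exponents should collapse cleanly: the power of $\norm{a}_\A$ becomes $p\cdot p^n = p^{n+1}$, and the power of $\norm{a}_\B$ becomes $p(k^n - p^n) + q\,k^n = (p+q)k^n - p^{n+1} = k^{n+1} - p^{n+1}$, exactly matching the desired form.

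The only bookkeeping issue is the constant. The recurrence $C_{n+1} = C\cdot C_n^{\,p}$ with $C_1 = C$ (the constant from the definition) solves to $C_n = C^{(p^n - 1)/(p - 1)}$ when $p \neq 1$, and to $C_n = C^n$ when $p = 1$. To absorb this into a bound of the shape $(C')^n$ as the statement asks, I would take $C' := \max\bigl(C,\, C^{1/(1-p)}\bigr)$ in the regime $p \leq 1$, where the exponent $(p^n-1)/(p-1)$ is either equal to $n$ or uniformly bounded by $1/(1-p)$. This constant-management is the only place where the argument requires a moment's care; the rest is essentially algebraic bookkeeping of exponents, with no substantive obstacle.
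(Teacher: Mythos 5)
Your induction is essentially the paper's own argument; the only cosmetic difference is that you factor $a^{k^{n+1}}=(a^{k^n})^k$ and apply the defining inequality first, whereas the paper factors $a^{k^{m+1}}=(a^k)^{k^m}$ and applies the inductive hypothesis first. Both orderings yield the same exponents on $\norm{a}_\A$ and $\norm{a}_\B$, and both yield the same accumulated constant, namely $C^{1+p+\cdots+p^{n-1}}=C^{(p^n-1)/(p-1)}$ (read as $C^n$ when $p=1$).

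The point you flag at the end is where you are in fact more careful than the paper, but your repair is incomplete. The paper's displayed chain passes from $C^m\,\|a^k\|_\A^{p^m}\cdots$ to $C^{m+1}\norm{a}_\A^{p^{m+1}}\cdots$, which silently replaces $C^{p^m}$ by $C$; this is legitimate only when $p\leq 1$ or $C\leq 1$. Your proposed constant $C'=\max\bigl(C,\,C^{1/(1-p)}\bigr)$ likewise covers only the regime $p\leq 1$. However, the definition permits any $p>0$ with $p+q=k$, and several of the paper's own examples have $p>1$ (the $(2,1+\theta,1-\theta)$ algebras of Samei--Shepelska, the $(4,\tfrac{4p+3}{p+1},\tfrac{1}{p+1})$ Fell-bundle algebras). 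For $p>1$ and $C>1$ the exponent $(p^n-1)/(p-1)=1+p+\cdots+p^{n-1}$ grows like $p^n$, so $C^{(p^n-1)/(p-1)}$ cannot be dominated by $(C')^n$ for any constant $C'$: in that regime neither your argument nor the paper's establishes the bound as stated. The conclusion the induction actually delivers is
\begin{equation*}
\|a^{k^{n}}\|_\A\leq C^{\frac{p^n-1}{p-1}}\norm{a}_\A^{p^n}\norm{a}_\B^{k^{n}-p^n},
\end{equation*}
and the $C^n$ in the statement should either be replaced by this or the claim restricted to $p\leq 1$. So you have not missed a step that the paper supplies; rather, you have half-caught a slip that the paper makes, and you should state explicitly that the case $p>1$ is not covered by the absorption trick.
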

\begin{proof}
    We will prove it by induction. Clearly $C$ is the constant given by the definition of $(k,p,q)$-differential subalgebra. For $n=1$, the identity is precisely the equality in \eqref{kpq}. For $n=m+1$, we have
    \begin{align*}
        \big\|a^{k^{m+1}}\big\|_{\A}&=\big\|(a^k)^{k^m}\big\|_{\A} \\
        &\leq C^m \big\|a^{k}\big\|_\A^{p^m}\big\|a^{k}\big\|_\B^{k^{m}-p^m} \\
        &\leq C^{m+1}\norm{a}_\A^{p^{m+1}}\norm{a}_\B^{p^mq}\norm{a}_\B^{k^{m+1}-kp^m} \\
        &= C^{m+1}\norm{a}_\A^{p^{m+1}}\norm{a}_\B^{k^{m+1}+(q-k)p^m}=C^{m+1}\norm{a}_\A^{p^{m+1}}\norm{a}_\B^{k^{m+1}-p^{m+1}},
    \end{align*} which proves the claim.
\end{proof}

Let us now argue that this class of Banach $^*$-algebras is vast and interesting. Obviously, we plan to do so by providing lots of examples of different natures, starting with what we call `normable ideals'.

\subsection{Normable ideals}\label{normable}

Let $\B$ be a $C^*$-algebra. A two-sided $^*$-ideal $I\subset \B$ is called a \emph{normable ideal} if there exists a norm $\norm{\cdot}_I$ on $I$ that makes it a Banach $^*$-algebra and such that $I\ni a\mapsto a^*\in I$ is continuous and
$$ 
\norm{ax}_I\leq \norm{a}_B\norm{x}_I,\quad\text{ and }\quad\norm{xa}_I\leq \norm{x}_I\norm{a}_\B,\quad \text{ for all }a\in\B,x\in I.
$$ 
If $I\subset \B$ is a normable ideal, then it is obviously a $(2,1,1)$-differential subalgebra. We now present two examples of normable ideals.

\begin{ex} \label{Schatten}
    Given a Hilbert space $\mathcal H$ and $p\in[1,\infty)$, the class of Schatten $p$-operators $S_p(\mathcal H)$ is a dense $^*$-ideal of the $C^*$-algebra of compact operators $\mathbb K(\mathcal H)$; moreover, it is a normable ideal under the standard Schatten $p$-norm $$\norm{T}_{S_p(\mathcal H)}={\rm Tr}(|T|^p)^{1/p}.$$
\end{ex}

\begin{ex} \label{hilbert}
    A proper $H^*$-algebra $\A$ is a Banach $^*$-algebra which is also a Hilbert space under the same norm and where $a^*$ is the unique element in $\A$ satisfying \begin{equation*}
        \langle ab,c\rangle=\langle b,a^*c\rangle\textup{ 
 and  }\langle ba,c\rangle=\langle b,ca^*\rangle\textup{, for all }b,c\in\A.
\end{equation*} 
Then the left regular representation $\A\ni a\mapsto L_a\in \mathbb B(\A)$ is a contractive  $^*$-monomorphism. We then have 
$$
\norm{ab}_{\A}\leq\norm{L_a}_{\mathbb B(\A)}\norm{b}_{\A}, \textup{ for all }a,b\in\A.
$$
Furthermore, the involution map $a\mapsto a^*$ must necessarily be isometric \cite[pg. 1211]{Pa01} and therefore the reversed inequality also holds: 
$$
\norm{ab}_{\A}\leq\norm{a}_{\A}\norm{L_b}_{\mathbb B(\A)}, \textup{ for all }a,b\in\A.
$$
So that $\A$ is a differential subalgebra of $\B=\overline{L(A)}^{\norm{\cdot}_{\mathbb B(\A)}}\cong {\rm C^*}(\A)$. In particular, when $K$ is a compact group, equipped with the normalized Haar measure, $L^2(K)$ becomes a proper $H^*$-algebra, with the operations inherited from $L^1(K)$.
\end{ex}

\subsection{Full Hilbert algebras}

Example \ref{hilbert} can be generalized to the class of Hilbert algebras, and we will do that here. The following definition is classical and taken from \cite[pg. 1214]{Pa01}. For a general treatment of Hilbert algebras and their properties, we recommend \cite[Chapter 11.7]{Pa01}.

\begin{defn}
    A \emph{Hilbert algebra} is a $^*$-algebra $\A$ that is a pre-Hilbert space under the same linear structure and an inner product $\langle\cdot,\cdot\rangle$ satisfying: \begin{itemize}
        \item[(i)] One has $\langle ab,c\rangle=\langle b,a^*c\rangle,$ for all $a,b,c\in\A$.
        \item[(ii)] For every $a\in\A$, there exists a constant $C_a>0$ such that $\norm{ab}_\H\leq C_a\norm{b}_\H$, for all $b\in\A$. Here $\norm{a}_\H=\langle a,a\rangle^{1/2}$.
        \item[(iii)] $\langle a,b\rangle=\langle b^*,a^*\rangle,$ for all $a,b\in\A$.
        \item[(iv)] $\A^2=\Span\{ab\mid a,b\in \A\}$ is dense in $\A$, under the topology given by the inner product.
    \end{itemize}
\end{defn}

Hilbert algebras are a central object of study in von Neumann algebra theory, at least in the setting of type $\mathrm{III}$ factors and Tomita-Takesaki theory (see \cite[Chapter VI]{Ta03}). The most obvious example is precisely $L^2(G)$, for a compact group $G$. The following example takes that idea as a starting point.

\begin{ex}
    let $G$ be an unimodular locally compact group and take $\A=C_{\rm c}(G)$, the set of compactly supported, complex-valued functions on $G$. Then $\A$ is a Hilbert algebra under the operations 
    $$
    \Phi*\Psi(x)=\int_G \Phi(y)\Psi(y^{-1}x)\d y\quad\text{ and }\quad \Phi^*(x)=\Phi(x^{-1})^*,
    $$ 
    valid for all $\Phi,\Psi\in \A$ and $x\in G$. The inner product is obviously given by 
    $$
    \langle\Phi,\Psi\rangle=\int_G \Phi(x)\overline{\Psi(x)}\d x, \quad\forall \Phi,\Psi\in\A.
    $$
\end{ex}

It is obvious from the definition that, on a Hilbert algebra $\A$ whose Hilbert space completion is $\H$, every element $a\in\A$ defines bounded operators $L_a,R_a\in\BofH$ given by the extension of left/right multiplication:
$$
L_a(b)=ab\quad\text{ and }\quad R_a(b)=ba,\quad\forall b\in\B.
$$
The definition of full Hilbert algebras, however, depends on the notion of bounded vectors, which is a slight generalization.

\begin{defn}
    Let $\A$ be a Hilbert algebra with Hilbert space completion $\mathcal H$. For $x\in\H$, define the operators $L_x,R_x:\A\subset \H\to\H$ by 
    $$
L_x(a)=R_a(x)\quad\text{ and }\quad R_x(a)=L_a(x),\quad\forall a\in\B.
$$
    The vector $x\in\H$ is said to be bounded if both $L_x$ and $R_x$ define bounded operators in $\BofH$. The set of bounded vectors in $\H$ is denoted by $\overline{\A}$, and we say that $\A$ is \emph{full} if $\A=\overline{\A}$.
\end{defn}

If $\A$ is a full Hilbert algebra, then endowing it with the norm 
$$
\norm{a}_\A=\norm{a}_\H+\norm{L_a}_{\BofH},
$$
makes $\A$ a Banach $^*$-algebra with isometric involution \cite[Theorem 11.7.10]{Pa01}. It is obvious that $\A$ is a $(2,1,1)$-differential subalgebra of $\B=\overline{\{L_a\mid a\in\A\}}^{\norm{\cdot}_{\BofH}}$. Indeed, one has 
\begin{align*}
    \norm{a^2}_\A&=\norm{a^2}_\H+\norm{L_{a^2}}_{\BofH} \\
    &\leq \norm{L_a}_{\BofH}\norm{a}_\H+\norm{L_a}_{\BofH}^2\leq \norm{L_a}_{\BofH}\norm{a}_\A,
\end{align*}
for all $a\in\A$.

\subsection{Domains of closed \texorpdfstring{$^*$}--derivations}\label{derivations}

Following \cite{BR75,KS93}, let us define (densely defined) unbounded derivations on $C^*$-algebras. In order to distinguish the situation from the (everywhere defined) derivations of Section \ref{autcont}, we will denote them with the symbol $\delta$, instead of just $D$.

Let $\B$ be a $C^*$-algebra. A closed derivation $\delta$ of $\B$ is a linear mapping from a dense subalgebra $\A$ into $\B$ that satisfies \begin{enumerate}
    \item[(i)] $\delta(ab)=a\delta(b)+\delta(a)b$, for all $a,b\in\A$.
    \item[(ii)] If $a_n\in\A$, $a_n\to a$ and $\delta(a_n)\to b$, then $a\in \A$ and $b=\delta(a)$.
\end{enumerate} If, in addition, $x\in \A$ implies $x^*\in\A$ and $\delta(x^*)=\delta(x)^*$, then $\delta$ is called a closed $^*$-derivation. 

If $\delta$ is a closed $^*$-derivation, then $\A$ is a Banach $^*$-algebra when endowed with the norm $$\norm{a}_\A=\norm{a}_\B+\norm{\delta(a)}_\B.$$

The obvious example of a closed $^*$-derivation is given by $\delta(f)=f'$ on $\B=C(\T)$, and in this case, $\A$ is precisely $C^1(\T)$, with its usual norm. For this reason, derivations are usually viewed as a noncommutative differential operator and therefore form a central object in the study of noncommutative geometry \cite{NCG}.

It is easy to observe that $\A$ is a $(2,1,1)$-differential subalgebra of $\B$. Indeed, it is shown by the following simple calculation, valid for all $a\in\A$. 
\begin{align*}
    \norm{a^2}_\A&=\norm{a^2}_\B+\norm{\delta(a^2)}_\B \\
    &\leq \norm{a}_\B^2+\norm{a\delta(a)}_\B+\norm{\delta(a)a}_\B \\
    &\leq \norm{a}_\B\big(\norm{a}_\B+2\norm{\delta(a)}_\B\big)\leq  2\norm{a}_\B\norm{a}_\A.
\end{align*}

\subsection{Weighted \texorpdfstring{$\ell^1$}--algebras of Fell bundles over discrete groups with polynomial growth}\label{weightt}
In this section, $ G$ will be a discrete group with unit $\e$. We recall that $ G$ has \emph{polynomial growth} if, for every finite subset $K\subset G$, there exists a $d\in\N$ such that
$$
\mu(K^n)=O(n^d),\quad\textup{ as }n\to\infty.
$$ 

Let us fix a Fell bundle $\mathscr C\!=\bigsqcup_{x\in G}\mathfrak C_x$ over $G$. The associated algebra of summable cross-sections $\ell^1( G\,\vert\,\mathscr C)$ is a Banach $^*$-algebra and a completion of the space $C_{\rm c}( G\,\vert\,\mathscr C)$ of cross-sections with finite support under the norm 
$$
\norm{\Phi}_1=\sum_{x\in G}\norm{\Phi(x)}_{\mathfrak C_x}.
$$ 
The universal $C^*$-algebra completion of $\ell^1( G\,\vert\,\mathscr C)$ is denoted by ${\rm C^*}(G\,\vert\,\mathscr C)$. For the general theory of Fell bundles, we recommend \cite[Chapter VIII]{FD88}. Here, we only recall the product on $\ell^1( G\,\vert\,\mathscr C)$, given by
\begin{equation*}\label{broduct}
\big(\Phi*\Psi\big)(x)=\sum_{y\in G} \Phi(y)\bu \Psi(y^{-1}x)
\end{equation*}
and its involution
\begin{equation*}\label{inwol}
\Phi^*(x)=\Phi(x^{-1})^\bu\,,
\end{equation*}
in terms of the operations $\big(\bu,^\bu\big)$ on the Fell bundle. Note that the adjoint operation $\Phi\mapsto \Phi^*$ is isometric.

\begin{defn}
    A {\rm weight} on the group $G$ is a function $\nu: G\to [1,\infty)$ satisfying 
\begin{equation*}\label{submultiplicative}
 \nu(xy)\leq \nu(x)\nu(y)\,,\quad\nu(x^{-1})=\nu(x)\,,\quad\forall\,x,y\in G.
\end{equation*} 
In addition, the weight $\nu$ is said to be a \emph{polynomial weight} if there is a constant $C>0$ such that \begin{equation*}
        \nu(xy)\leq C\big(\nu(x)+\nu(y)\big),
    \end{equation*} for all $x,y\in G$.
\end{defn}

If $G$ is of polynomial growth and finitely generated or discrete and locally finite, then it is possible to construct a polynomial weight $\nu$ on $ G$ such that $\nu^{-1}\in \ell^p( G)$, for any $0<p<\infty$ \cite{Py82}.

Given a weight $\nu$ on $G$, one may consider the weighted subalgebra $\ell^{1,\nu}( G\,\vert\,\mathscr C)$, which consists of all $\Phi\in\ell^{1}( G\,\vert\,\mathscr C) $ such that
$$
\norm{\Phi}_{1,\nu}:=\sum_{x\in G}\nu(x)\norm{\Phi(x)}_{\mathfrak C_x}<\infty.
$$
It is a Banach $^*$-subalgebra of $\ell^{1}( G\,\vert\,\mathscr C)$ under the inherited operations and the norm $\norm{\cdot}_{1,\nu}$. Furthermore, in \cite[Proposition 5.15, Lemma 3.4]{Fl24} we proved the following.

\begin{prop}[\cite{Fl24}]\label{Fell}
    Let $\nu$ a polynomial weight on $G$ such that $\nu^{-1}$ belongs to $\ell^p( G)$, for some $0<p<\infty$. Then $\ell^{1,\nu}( G\,\vert\,\mathscr C)$ is a $(4,\tfrac{4p+3}{p+1},\tfrac{1}{p+1})$-differential subalgebra of ${\rm C^*}( G\,\vert\,\mathscr C)$.
\end{prop}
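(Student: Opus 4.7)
The plan is to split the inequality into two independent steps: a convolution estimate relying only on the polynomial nature of the weight (bounding $\norm{\Phi^4}_{1,\nu}$ by a product involving the unweighted $\ell^{1}$-norm), followed by a H\"older interpolation that bounds $\norm{\Phi}_{1}$ above by a combination of $\norm{\Phi}_{1,\nu}$ and the $C^{*}$-norm, exploiting the hypothesis $\nu^{-1}\in\ell^{p}(G)$.

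For the first step, iterating the polynomial-weight inequality $\nu(xy)\leq C_{0}(\nu(x)+\nu(y))$ three times yields a constant $C_{1}>0$ with $\nu(y_{1}y_{2}y_{3}y_{4})\leq C_{1}\sum_{i=1}^{4}\nu(y_{i})$ for every quadruple in $G^{4}$. Expanding the fourfold convolution, applying the triangle inequality, and using submultiplicativity of the Fell-bundle product then give
\[
\norm{\Phi^{4}}_{1,\nu}\leq \sum_{y_{1},y_{2},y_{3},y_{4}\in G}\nu(y_{1}y_{2}y_{3}y_{4})\prod_{i=1}^{4}\norm{\Phi(y_{i})}_{\mathfrak C_{y_{i}}}\leq 4C_{1}\norm{\Phi}_{1,\nu}\norm{\Phi}_{1}^{3}.
\]

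For the second step, I would first invoke the standard Fell-bundle Fourier-coefficient estimate $\norm{\Phi(x)}_{\mathfrak C_{x}}\leq \norm{\Phi}_{{\rm C}^{*}(G\,\vert\,\mathscr C)}$, a basic norm-one fiber-projection statement available through the reduced $C^{*}$-algebra (and a known fact from \cite{Fl24}). Writing $\norm{\Phi(x)}=\norm{\Phi(x)}^{p/(p+1)}\norm{\Phi(x)}^{1/(p+1)}$, dominating the second factor pointwise by $\norm{\Phi}_{{\rm C}^{*}}^{1/(p+1)}$, and applying H\"older's inequality with conjugate exponents $p+1$ and $(p+1)/p$ to $\sum_{x}\nu(x)^{-p/(p+1)}\bigl(\nu(x)\norm{\Phi(x)}\bigr)^{p/(p+1)}$ would produce
\[
\norm{\Phi}_{1}\leq \norm{\nu^{-1}}_{p}^{p/(p+1)}\,\norm{\Phi}_{1,\nu}^{p/(p+1)}\,\norm{\Phi}_{{\rm C}^{*}}^{1/(p+1)}.
\]

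Plugging this estimate into the convolution bound from the first step produces $\norm{\Phi^{4}}_{1,\nu}\leq C\,\norm{\Phi}_{1,\nu}^{(4p+1)/(p+1)}\norm{\Phi}_{{\rm C}^{*}}^{3/(p+1)}$. Since $\nu\geq 1$ forces $\norm{\Phi}_{{\rm C}^{*}}\leq \norm{\Phi}_{1}\leq \norm{\Phi}_{1,\nu}$, I can absorb two copies of $\norm{\Phi}_{{\rm C}^{*}}^{1/(p+1)}$ into $\norm{\Phi}_{1,\nu}^{2/(p+1)}$, landing exactly on the advertised exponents $(4p+3)/(p+1)$ and $1/(p+1)$. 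I expect the interpolation step to be the main obstacle: it is where the $C^{*}$-norm is forced to enter the picture, and the H\"older exponents must be tuned precisely by the integrability parameter $p$, in a way that depends essentially on the Fell-bundle Fourier-coefficient bound.
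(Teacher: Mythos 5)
Your argument is correct and essentially complete. Note first that the paper does not reproduce a proof of this proposition at all: it simply cites \cite[Proposition 5.15, Lemma 3.4]{Fl24}, so the only comparison available is with the strategy encoded in that citation, namely a convolution estimate coming from the polynomial weight combined with an interpolation that brings in the $C^*$-norm through the Fourier-coefficient bound $\norm{\Phi(x)}_{\mathfrak C_x}\leq\norm{\Phi}_{{\rm C^*}(G\,\vert\,\mathscr C)}$ (which is exactly the content of the cited Lemma~3.4). Your proof follows that same two-step skeleton and every step checks out: iterating $\nu(xy)\leq C_0(\nu(x)+\nu(y))$ twice gives the fourfold bound with $C_1=C_0^2$; the expansion of $\Phi^{*4}$ with submultiplicativity of the fibre product yields $\norm{\Phi^4}_{1,\nu}\leq 4C_1\norm{\Phi}_{1,\nu}\norm{\Phi}_1^3$; and the H\"older step with exponents $p+1$ and $(p+1)/p$ is computed correctly, since $\sum_x\nu(x)^{-p}=\norm{\nu^{-1}}_p^p$ and $\sum_x\nu(x)\norm{\Phi(x)}=\norm{\Phi}_{1,\nu}$. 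Combining gives exponents $\bigl(\tfrac{4p+1}{p+1},\tfrac{3}{p+1}\bigr)$, which is in fact \emph{stronger} than the advertised $\bigl(\tfrac{4p+3}{p+1},\tfrac{1}{p+1}\bigr)$; your final absorption step, using the contractive chain $\norm{\Phi}_{{\rm C}^*}\leq\norm{\Phi}_1\leq\norm{\Phi}_{1,\nu}$ (valid because the involution on $\ell^1$ is isometric, so every $^*$-representation is contractive, and $\nu\geq1$), legitimately trades $\tfrac{2}{p+1}$ of the $C^*$-exponent for the $\A$-exponent. The only ingredient you import without proof is the fibrewise bound $\norm{\Phi(x)}_{\mathfrak C_x}\leq\norm{\Phi}_{{\rm C^*}}$, which you correctly identify as the reduced-algebra Fourier-coefficient contractivity; since the paper itself outsources exactly this to \cite{Fl24}, that is an acceptable black box here. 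In short: same approach as the source, carried out correctly, and with a bonus sharpening of the exponents along the way.
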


\subsection{Weighted subalgebras of groups with strong subexponential growth}\label{weighttt}

Let $G$ be a finitely generated group and let $S$ denote a finite, symmetric set of generators. Suppose that $G$ has \emph{strong subexponential growth}, that is, suppose there exists an $\alpha\in(0,1)$ such that 
$$ 
|S^n|=O( e^{n^\alpha}),\quad \text{ as } n\to\infty.
$$
Using the set $S$, one also defines the word-length, which is the function $\tau_S:G\to[0,\infty)$, given by the formula 
\begin{equation*}\label{word-lenght}
    \tau_S(x)=\min\{n\in\N\mid x\in S^n\}.
\end{equation*}
As we did with $\ell^1$ in the previous subsection, one can form weighted versions of the $\ell^p$-spaces. Indeed, the space 
$$
\ell^{p,\nu}(G)=\{\Phi:G\to \CC\mid \sum_{x\in G}\nu(x)^p|\Phi(x)|^p<\infty\}
$$
is a Banach space under the norm 
$$
\norm{f}_{p,\nu}=\Big(\sum_{x\in G}\nu(x)^p|\Phi(x)|^p\Big)^{1/p}.
$$
Given a group of strong subexponential growth, Samei and Shepelska considered the weight given by \begin{equation}\label{SSh}
    \nu_{D,\alpha_0}(x)=De^{\tau_S(x)^{\alpha_0}},
\end{equation} 
where $D>0$ and $\alpha<\alpha_0<1$. Under these conditions, they studied the associated weighted $\ell^p$-space and showed not only that it is a Banach $^*$-algebra under convolution, but that it is a differential subalgebra of ${\rm C^*}(G)$ \cite[Theorem 4.1, Corollary 4.3]{SaSh19}.\footnote{Note that Samei and Shepelska use the notation $\ell^p(G,\nu)$ instead of $\ell^{p,\nu}(G)$. We chose the latter here to stay consistent with the notation in subsections \ref{weightt} and \ref{weightttt}.}

\begin{prop}[Samei-Shepelska \cite{SaSh19}]
    Let $p\in[1,\infty)$, $G$ be a finitely generated group of strong subexponential growth and $\nu=\nu_{D,\alpha_0}$ a weight of the form \eqref{SSh}. Then there exists $\theta\in(0,1)$ such that $\ell^{p,\nu}(G)$ is a $(2,1+\theta,1-\theta)$-differential subalgebra of ${\rm C^*}(G)$.
\end{prop}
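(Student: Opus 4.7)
The plan is to follow the Samei--Shepelska argument \cite{SaSh19}, which rests on three ingredients: submultiplicativity of $\nu$, the subexponential growth bound $|S^n|=O(e^{n^\alpha})$ with $\alpha<\alpha_0$, and a truncation estimate that introduces the ${\rm C^*}$-norm via the pointwise Cauchy--Schwarz bound $\norm{f*f}_\infty\le\norm{f}_2^2$.

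First I would record the structural properties. Since $\alpha_0<1$, the function $u\mapsto u^{\alpha_0}$ is subadditive on $[0,\infty)$, so $\tau_S(xy)\le\tau_S(x)+\tau_S(y)$ yields $\nu(xy)\le D^{-1}\nu(x)\nu(y)$; in particular $\ell^{p,\nu}(G)$ is a Banach $^*$-algebra. Because $\alpha<\alpha_0$, a shell decomposition gives $\sum_n e^{-sn^{\alpha_0}}|S^n\setminus S^{n-1}|\lesssim\sum_n e^{n^\alpha-sn^{\alpha_0}}<\infty$ for every $s>0$, so $\nu^{-1}\in\ell^s(G)$ for all $s\in(0,\infty)$, and H\"older's inequality yields continuous inclusions $\ell^{p,\nu}(G)\hookrightarrow\ell^1(G)\hookrightarrow{\rm C^*}(G)$.

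The main estimate is produced by a word-length truncation. For $a\in\ell^{p,\nu}(G)$ and an integer $N$ to be optimized, write $a=b+c$ with $b=a\cdot\mathbbm{1}_{S^N}$, expand $a^2=b^2+bc+cb+c^2$, and bound each piece in $\norm{\cdot}_{p,\nu}$. For the tail $c$, the lower bound $\nu(x)\ge De^{N^{\alpha_0}}$ on $\mathrm{supp}(c)$ and H\"older give $\norm{c}_1\le\norm{\nu^{-1}\mathbbm{1}_{G\setminus S^N}}_{p'}\norm{a}_{p,\nu}$, and the $\ell^{p'}$-tail of $\nu^{-1}$ decays like $e^{-\eta N^{\alpha_0}}$ for some $\eta>0$; since $\norm{c}_{{\rm C^*}(G)}\le\norm{c}_1$, the three pieces involving $c$ are exponentially small in $N^{\alpha_0}$. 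For the head $b^2$, use $\mathrm{supp}(b^2)\subseteq S^{2N}$, the Cauchy--Schwarz bound $\norm{b*b}_\infty\le\norm{b}_2^2$, and the chain $\norm{b}_2\le\norm{b}_{{\rm C^*}(G)}\le\norm{a}_{{\rm C^*}(G)}+\norm{c}_{{\rm C^*}(G)}$ to obtain
\[
\norm{b^2}_{p,\nu}\le De^{(2N)^{\alpha_0}}|S^{2N}|^{1/p}\norm{b}_2^2\le De^{(2N)^{\alpha_0}+(2N)^\alpha/p}\bigl(\norm{a}_{{\rm C^*}(G)}+\eta' e^{-N^{\alpha_0}}\norm{a}_{p,\nu}\bigr)^2.
\]
Choosing $N$ so that $N^{\alpha_0}\sim\log(\norm{a}_{p,\nu}/\norm{a}_{{\rm C^*}(G)})$ balances both contributions, and the slack $\alpha<\alpha_0$ allows the exponential factor to be absorbed into $\norm{a}_{p,\nu}^{2\theta}\norm{a}_{{\rm C^*}(G)}^{-2\theta}$, yielding $\norm{a^2}_{p,\nu}\le C\norm{a}_{p,\nu}^{1+\theta}\norm{a}_{{\rm C^*}(G)}^{1-\theta}$ with an explicit $\theta\in(0,1)$ depending on $p$ and $\alpha/\alpha_0$.

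The main obstacle is the head estimate: crude bounds like Young's $\norm{b^2}_p\le\norm{b}_1\norm{b}_p$ would leave the ${\rm C^*}$-norm out of the picture, so one must route through $\norm{b^2}_\infty\le\norm{b}_2^2$ to inject the embedding $\norm{b}_2\le\norm{b}_{{\rm C^*}(G)}$. The cost is a factor of $|S^{2N}|^{1/p}$ when converting $\norm{b^2}_\infty$ into $\norm{b^2}_p$, and it is precisely the gap $\alpha<\alpha_0$ that makes this factor subexponential in $N^{\alpha_0}$, hence absorbable after optimization; this is ultimately what produces the strict exponent $\theta<1$.
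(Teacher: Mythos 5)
The paper itself contains no proof of this proposition: it is quoted from Samei--Shepelska \cite[Theorem 4.1, Corollary 4.3]{SaSh19}, so there is no in-paper argument to compare yours against. Judged on its own terms, your sketch gets the preliminaries right (submultiplicativity of $\nu$ from subadditivity of $u\mapsto u^{\alpha_0}$; $\nu^{-1}\in\ell^s(G)$ for all $s>0$, hence $\ell^{p,\nu}(G)\hookrightarrow\ell^1(G)\hookrightarrow{\rm C^*}(G)$), and the head estimate is sound: routing $\norm{b*b}_\infty\le\norm{b}_2^2\le\norm{b}_{{\rm C^*}(G)}^2$ through the finite support $S^{2N}$ is a legitimate way to inject the ${\rm C^*}$-norm, and the gap $\alpha<\alpha_0$ is indeed what makes the factor $e^{(2N)^{\alpha_0}}|S^{2N}|^{1/p}$ absorbable after optimizing $N$.

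The genuine gap is in the cross terms. You assert that the smallness of $\norm{c}_1$ (equivalently of $\norm{c}_{{\rm C^*}(G)}$) makes the pieces $bc$, $cb$, $c^2$ exponentially small \emph{in the norm} $\norm{\cdot}_{p,\nu}$, but no inequality available in $\ell^{p,\nu}(G)$ converts an unweighted $\ell^1$-bound on one factor into a $\norm{\cdot}_{p,\nu}$-bound on a product. The weighted Young inequality gives $\norm{bc}_{p,\nu}\le D^{-1}\norm{b}_{p,\nu}\norm{c}_{1,\nu}$, so the quantity that would have to be small is the \emph{weighted} tail $\norm{c}_{1,\nu}=\sum_{\tau_S(x)>N}\nu(x)|a(x)|$; for $p>1$ this need not even be finite for $a\in\ell^{p,\nu}(G)$, and for $p=1$ it is bounded only by $\norm{a}_{1,\nu}$ with no decay in $N$, since the weighted mass of $a$ may sit entirely outside $S^N$. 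The algebra bound $\norm{bc}_{p,\nu}\le C\norm{b}_{p,\nu}\norm{c}_{p,\nu}$ fares no better, as $\norm{c}_{p,\nu}$ is merely bounded by $\norm{a}_{p,\nu}$. The additive truncation $a=b+c$ with negligible cross terms is the right move for algebras carrying a module bound of the form $\norm{xy}_{\A}\lesssim\norm{x}_{\A}\norm{y}_{\B}$ (normable ideals as in Subsection \ref{normable}, or the solid matrix algebras of Shin--Sun), but $\ell^{p,\nu}(G)$ has no such module structure over ${\rm C^*}(G)$, and this is exactly where the argument breaks. The actual proof in \cite{SaSh19} is organized around pointwise estimates on the weight along the convolution sum itself --- for instance, on the region $\tau_S(y)\ge\tau_S(x)/2$ one has $\nu(x)\le D^{1-2^{\alpha_0}}\nu(y)^{2^{\alpha_0}}$ with $2^{\alpha_0}<2$, which is the source of the exponent $1+\theta$ --- rather than around an additive decomposition of $a$; your cross-term estimate would need to be rebuilt along such lines.
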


\subsection{Sobolev algebras of groups with the rapid decay property}\label{weightttt}
    Let $G$ be a group. A length function $l:G\to[0,\infty)$ such that $l(\e)=0$, $l(x^{-1})=l(x)$ and $l(xy)\leq l(x)+l(y)$, for all $x,y\in G$. Given a length function, we define the ball centered at $y\in G$ and of radius $r\geq 0$ by 
    $$
    B(y,r)=\{x\in G\mid l(xy^{-1})\leq r\}.
    $$
    The left regular representation is defined by convolution with $\ell^2(G)$ functions, that is, for $\Phi\in C_{\rm c}(G)$,
    $$
    \lambda(\Phi)\Psi=\Phi*\Psi,\quad \text{ for all }\Psi\in \ell^2(G).
    $$
    The $C^*$-algebra generated by $\{\lambda(\Phi)\mid \Phi\in C_{\rm c}(G)\}$ is called the \emph{reduced group $C^*$-algebra} and we denote it by ${\rm C_r^*}(G)$. 
    \begin{defn}
        We say that the group $G$ has \emph{rapid decay} or that it has \emph{(RD)} with respect to the length function $l:G\to[0,\infty)$ if there exists a polynomial $P$ such that for every $r\geq 0$ and for every $\Phi\in C_{\rm c}(G)$ with ${\rm Supp}(\Phi)\subset B(\e,r)$, one has
        $$
        \norm{\Phi}_{{\rm C_r^*}(G)}\leq P(r)\norm{\Phi}_{\ell^2(G)}.
        $$
    \end{defn}
    The rapid decay property is enjoyed by various classes of groups (see \cite{Ch17}), notably finitely generated free groups and hyperbolic groups. In \cite{La00}, Lafforgue proved that groups with the rapid decay property have a distinguished algebra with notable K-theoretic properties (see \cite{La02}). The algebra in question is constructed as follows. For large enough $s>0$, consider the weight 
    $$
    \nu_s(x)=(1+l(x))^s.
    $$
    And the algebra in question is, in the notation of the previous section, $\ell^{2,\nu_s}(G)$.\footnote{Lafforgue uses the notation $H^s(G)$, since this algebra is historically inspired by the space of Sobolev functions. However, we wanted to keep consistency with the notation of the previous subsections.} The fact that this $\ell^2$-space is an algebra is not obvious.

    \begin{thm}[\textcite{La00}]
        Let $G$ be a group with the rapid decay property with respect to the length function $l:G\to[0,\infty)$. Then there exists a constant $s>0$ such that $\ell^{2,\nu_s}(G)$ is a $(2,1,1)$-differential subalgebra of ${\rm C^*_r}(G)$.
    \end{thm}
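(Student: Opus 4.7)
The plan is to adapt Lafforgue's shell-decomposition argument, with more careful bookkeeping to reach the sharper $(2,1,1)$-differential bound $\|\Phi * \Phi\|_{2, \nu_s} \leq C\|\Phi\|_{2,\nu_s}\|\Phi\|_{{\rm C_r^*}}$ rather than the mere Banach-algebra estimate it yields naively. It suffices to work with $\Phi \in C_{\rm c}(G)$ and extend by continuity. Partition $G$ into the length shells $B_n := \{x \in G : n \leq l(x) < n+1\}$ and decompose $\Phi = \sum_n \Phi_n$ with $\Phi_n := \Phi \chi_{B_n}$; since $\nu_s(x) \asymp (1+n)^s$ on $B_n$, one has $\|\Phi\|_{2,\nu_s}^2 \asymp \sum_n (1+n)^{2s}\|\Phi_n\|_2^2$, and the (RD) hypothesis furnishes a polynomial $P$ of degree $N$ with $\|\Psi\|_{{\rm C_r^*}} \leq P(r)\|\Psi\|_2$ whenever $\mathrm{Supp}(\Psi) \subset B(\e, r)$.

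The first key estimate comes from the triangle inequality $1+l(x) \leq (1+l(y))(1+l(y^{-1}x))$, which gives $\nu_s(x) \leq C(1+n)^s \nu_s(y^{-1}x)$ for $y \in B_n$, hence the pointwise bound $\nu_s(x)|(\Phi_n * \Phi)(x)| \leq C(1+n)^s (|\Phi_n| * (\nu_s|\Phi|))(x)$. Controlling the resulting convolution by the left-regular ${\rm C_r^*}$-norm of $|\Phi_n|$ and applying (RD) to $|\Phi_n|$ yields
$$
\|\Phi_n * \Phi\|_{2,\nu_s} \leq C(1+n)^{s+N}\|\Phi_n\|_2 \|\Phi\|_{2,\nu_s}.
$$
A complementary estimate uses the right-regular representation. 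Decomposing $\Phi_n * \Phi$ into output shells $(\Phi_n * \Phi)|_{B_k} = (\Phi_n * \Phi_{[|k-n|-1,\,k+n+1]})|_{B_k}$, applying Young's inequality $\|\Phi_n * \Phi_{[a,b]}\|_2 \leq \|\Phi_n\|_2 \|\Phi_{[a,b]}\|_{{\rm C_r^*}}$, and using (RD) on the ball-supported $\Phi_{[a,b]}$ together with $\|\Phi_{[a,b]}\|_2 \leq \|\Phi\|_2 \leq \|\Phi\|_{{\rm C_r^*}}$, gives
$$
\|(\Phi_n * \Phi)|_{B_k}\|_2 \leq P(k+n+2)\|\Phi_n\|_2 \|\Phi\|_{{\rm C_r^*}}.
$$

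The main obstacle is that neither estimate alone suffices: summing the first in $n$ via Cauchy--Schwarz yields only a Banach-algebra-style bound $\|\Phi * \Phi\|_{2,\nu_s} \leq C\|\Phi\|_{2,\nu_s}\|\Phi\|_{2,\nu_{s+N+1}}$, with a shifted weight, while the second diverges when summed over $k$ with the weights $(1+k)^{2s}$. The idea is to combine the two by case analysis (say, $k \leq n$ versus $k > n$), in the spirit of the Kato--Ponce commutator technique: in each regime one factor supplies the local (RD) polynomial, which is absorbed into $\nu_s$ by taking $s$ larger than a universal linear function of $N$, while the other factor either produces $\|\Phi\|_{{\rm C_r^*}}$ directly or is controlled by Cauchy--Schwarz through $\|\Phi\|_{2,\nu_s}$ without multiplicative loss. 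The delicate point is organizing the double sum over $(n,k)$ so that the polynomial weights cancel exactly against the sizes of the overlapping shells. Closure of $\ell^{2,\nu_s}(G)$ under involution with continuous involution, required by the definition of a Banach $^*$-subalgebra, is immediate from $\nu_s(x^{-1}) = \nu_s(x)$.
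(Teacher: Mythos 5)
The paper does not prove this statement; it is quoted from Lafforgue's work, so the only question is whether your argument stands on its own. Your two preliminary estimates are both correct: the pointwise bound $\nu_s(x)\leq C(1+n)^s\nu_s(y^{-1}x)$ for $y\in B_n$ does give $\norm{\Phi_n*\Phi}_{2,\nu_s}\leq C(1+n)^{s+N}\norm{\Phi_n}_2\norm{\Phi}_{2,\nu_s}$, and the right-convolution bound combined with (RD) on the annular truncation does give $\norm{(\Phi_n*\Phi)|_{B_k}}_2\leq P(k+n+2)\norm{\Phi_n}_2\norm{\Phi}_{{\rm C^*_r}}$. The gap is the final combination step, which you describe but do not carry out, and which does not close in the way you indicate. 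The branch that uses your first estimate is structurally incapable of contributing to the target inequality: summed over $n$ it produces $\norm{\Phi}_{2,\nu_s}\norm{\Phi}_{2,\nu_{s+N+1}}$, and no factor of $\norm{\Phi}_{{\rm C^*_r}}$ ever appears, whereas the inequality $\norm{\Phi*\Phi}_{2,\nu_s}\leq C\norm{\Phi}_{2,\nu_s}\norm{\Phi}_{{\rm C^*_r}}$ requires every term of the final bound to carry a ${\rm C^*_r}$-factor (a term $\norm{\Phi}_{2,\nu_s}\norm{\Phi}_{2,\nu_{s'}}$ only yields the Banach-algebra property). The branch that uses your second estimate does produce $\norm{\Phi}_{{\rm C^*_r}}$, but the polynomial $P(k+n+2)$, once multiplied by $(1+k)^{2s}$ and summed over the roughly $2n$ admissible output shells $k$, shifts the weight on the surviving $\ell^2$-factor from $\nu_s$ to $\nu_{s+N+O(1)}$. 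Your proposed remedy, ``taking $s$ larger than a universal linear function of $N$,'' cannot repair this: the target inequality has the same $s$ on both sides, so enlarging $s$ enlarges the left-hand side in lockstep, and an estimate of the form $\norm{\Phi*\Phi}_{2,\nu_s}\leq C\norm{\Phi}_{2,\nu_{s+N}}\norm{\Phi}_{{\rm C^*_r}}$ never self-improves to the unshifted one, since $\norm{\cdot}_{2,\nu_{s+N}}$ strictly dominates $\norm{\cdot}_{2,\nu_s}$.

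Concretely, split the output shell $B_k$ according to which input factor carries the larger length. The half where the left factor is long can be handled cleanly (with a dyadic rather than unit-shell decomposition one gets exactly $C\norm{\Phi}_{2,\nu_s}\norm{\Phi}_{{\rm C^*_r}}$ from $\norm{f*g}_2\leq\norm{f}_2\norm{g}_{{\rm C^*_r}}$ applied to the tail of $\Phi$). But the half where the right factor is long is precisely where your scheme breaks: estimating $\norm{\lambda(\text{short factor})}$ via (RD) costs $\norm{\Phi}_{2,\nu_{N+1}}$ rather than $\norm{\Phi}_{{\rm C^*_r}}$, while putting the ${\rm C^*_r}$-norm on the short factor forces (RD) onto the long annular piece at cost $P(2^k)$, shifting the weight by $N$. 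Closing this case is the actual content of Lafforgue's theorem and requires an additional idea beyond the two estimates you list; as written, the proof establishes only the Banach-algebra property of $\ell^{2,\nu_s}(G)$ and a weight-shifted differential estimate, not the $(2,1,1)$-differential condition.
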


\subsection{Matrix algebras with polynomial off-diagonal decay}
    The following are examples of algebras of matrices which are generalized differential subalgebras of $\mathbb B(\ell^2(\Z))$. Our main reference for this subsection is \cite{ShSu20}. See also \cite{Su05,GL06,Su07}.

Given $p\in [1,\infty]$ and $\alpha\geq 0$, we define the Gr\"ochenig-Schur family of infinite matrices by
 \begin{equation*}
{\mathcal A}_{p,\alpha}=\Big\{ A=(a(i,j))_{i,j \in \Z} \mid  \|A\|_{{\mathcal A}_{p,\alpha}}<\infty\Big\},
\end{equation*}
the Baskakov-Gohberg-Sj\"ostrand family of infinite matrices by
\begin{equation*}
{\mathcal C}_{p,\alpha}=\Big\{ A= (a(i,j))_{i,j \in \Z} \mid  \|A\|_{{\mathcal C}_{p,\alpha}}<\infty\Big\},
\end{equation*}
and the Beurling family of infinite matrices
\begin{equation*}
{\mathcal B}_{p,\alpha}=\Big\{ A= (a(i,j))_{i,j \in \Z} \mid  \|B\|_{{\mathcal A}_{p,\alpha}}<\infty\Big\}.
\end{equation*}
The norms involved above are given by
 \begin{equation*}
 \|A\|_{{\mathcal A}_{p,\alpha}}
 = \max \Big\{ \sup_{i \in \Z} \big\|\big(a(i,j) \nu_\alpha(i, j)\big)_{j\in \Z}\big\|_{\ell^p(\Z)}, \ \ \sup _{j \in \Z}
 \big\|\big(a(i,j) \nu_\alpha(i, j)\big)_{i\in \Z}\big\|_{\ell^p(\Z)}
 \Big\},
\end{equation*}
\begin{equation*}
\|A\|_{{\mathcal C}_{p,\alpha}} = \Big\| \Big(\sup_{i-j=k} |a(i,j)| \nu_\alpha(i, j)\Big)_{k\in \Z} \Big\|_{\ell^p(\Z)},
\end{equation*}
and
\begin{equation*}
\|A\|_{{\mathcal B}_{p,\alpha}} = \Big\| \Big(\sup_{|i-j|\ge |k| } |a(i,j)| \nu_\alpha(i, j)\Big)_{k\in \Z} \Big\|_{\ell^p(\Z)},
\end{equation*}
where $\nu_\alpha(i, j)=(1+|i-j|)^\alpha$, is a polynomial weight on $\Z^2$. It is clear that
\begin{equation*}
{\mathcal B}_{p,\alpha} \subset {\mathcal C}_{p,\alpha} \subset {\mathcal A}_{p,\alpha}, \quad\text{ for all }1\leq p\leq \infty  \text{ and all }\alpha\geq 0,
\end{equation*}
where the inclusions are proper as long as $p\not=\infty$. In the case $p=\infty,$ one actually has 
$$
{\mathcal B}_{\infty,\alpha}={\mathcal C}_{\infty,\alpha}={\mathcal A}_{\infty,\alpha},
$$
and the resulting algebra is called the Jaffard algebra, and it is usually denoted by ${\mathcal J}_{\alpha}$. The following proposition is taken from \cite[Theorem 2.5]{ShSu20}.

\begin{prop}[Shin-Sun \cite{ShSu20}]
Let $1\leq p\leq\infty$ and $\alpha>1-1/p$. Denote $$f(p,\alpha)=\frac{\alpha+1/p-1}{\alpha+1/p-1/2}\in (0, 1).$$ Then  ${\mathcal A}_{p,\alpha}$, ${\mathcal C}_{p,\alpha}$ and ${\mathcal B}_{p, \alpha}$ are $(2,2-f(p,{\alpha}),f(p,{\alpha}))$-differential subalgebras of ${\mathbb B}(\ell^2(\Z))$.
\end{prop}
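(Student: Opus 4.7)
My plan is to recognize this proposition as a direct translation of \cite[Theorem 2.5]{ShSu20} into the $(k,p,q)$-differential vocabulary, so the task reduces to checking that Shin--Sun's estimate has the exact shape
\[
\|A^2\|_{\mathcal X} \leq C \|A\|_{\mathcal X}^{2-f(p,\alpha)} \|A\|_{\mathcal B(\ell^2(\Z))}^{f(p,\alpha)}, \qquad \mathcal X \in \{\mathcal A_{p,\alpha},\mathcal C_{p,\alpha},\mathcal B_{p,\alpha}\}.
\]
Since $(2-f(p,\alpha))+f(p,\alpha)=2$, this automatically realises the $(2,2-f(p,\alpha),f(p,\alpha))$-condition, and no further conceptual work is required beyond reading off the exponents.

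To re-derive the underlying estimate from scratch, I would start from the pointwise identity $(A^2)(i,j)=\sum_k a(i,k)\,a(k,j)$ combined with the weak submultiplicativity of the polynomial weight $\nu_\alpha(i,j)=(1+|i-j|)^\alpha$, namely $\nu_\alpha(i,j)\leq 2^{\alpha}\bigl(\nu_\alpha(i,k)+\nu_\alpha(k,j)\bigr)$ for all $k$. This bounds the weighted entries of $A^2$ by two convolution-type sums, each of which carries the weight on only one of the two factors. Applying H\"older's and Young's inequalities adapted to the corresponding Schur-type norm provides an intermediate bound; the three algebras can be treated in parallel because the chain $\mathcal B_{p,\alpha}\subset\mathcal C_{p,\alpha}\subset\mathcal A_{p,\alpha}$ allows most estimates to transfer, the essential ingredients being the row/column $\ell^p$-sums of the weighted entries.

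To pin down the precise exponent $f(p,\alpha)=\tfrac{\alpha+1/p-1}{\alpha+1/p-1/2}$, I would then run a truncation/optimization argument. Fix a threshold $N\in\mathbb N$ and split $A=A_N+R_N$, where $A_N$ retains only the entries with $|i-j|\leq N$. The near-diagonal block $A_N$ is controlled via Schur's test and thus by $\|A\|_{\mathcal B(\ell^2)}$ (up to polynomial loss in $N$), while the far-diagonal remainder $R_N$ has small weighted contribution because $\nu_\alpha(i,j)\geq (1+N)^\alpha$ on its support. Expanding $A^2=A_N^2+A_NR_N+R_NA_N+R_N^2$ and balancing the cut-off $N$ against the ratio $\|A\|_{\mathcal X}/\|A\|_{\mathcal B(\ell^2)}$ yields the exponent $f(p,\alpha)$.

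The main obstacle will be the sharpness of $f(p,\alpha)$ together with the admissibility condition $\alpha>1-1/p$: the latter is precisely what guarantees that the weighted row/column $\ell^p$-sums control the $\ell^1$-sums needed in Schur's test, and at the critical value $\alpha=1-1/p$ the exponent $f(p,\alpha)$ vanishes, so the differential improvement over plain submultiplicativity disappears. Handling the Beurling norm $\mathcal B_{p,\alpha}$ carefully -- where the ``sup over the tail'' replaces the pointwise weight -- is the most delicate bookkeeping, but once the argument is set up for $\mathcal A_{p,\alpha}$, the other two cases follow by essentially the same interpolation.
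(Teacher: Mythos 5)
Your proposal is correct and takes the same route as the paper, which offers no independent proof but simply quotes \cite[Theorem 2.5]{ShSu20} and observes that the exponents $2-f(p,\alpha)$ and $f(p,\alpha)$ are positive and sum to $2$, so the estimate is literally the $(2,2-f(p,\alpha),f(p,\alpha))$-differential condition. Your additional sketch of the truncation argument (splitting $A$ into a near-diagonal band controlled by Schur's test and a far-diagonal remainder controlled by the weight, then optimizing the cut-off) is a faithful account of how Shin and Sun actually establish that estimate, but it is not reproduced in this paper.
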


\subsection{Algebras of localized integral operators} The same paper of Shin and Sun \cite{ShSu20} also provides us with an example of an algebra satisfying \eqref{4diff}. This algebra is of a different nature than the previous ones, and so it deserves to be discussed by itself. The algebra in question was first introduced in \cite{Su08}.

Fix $1\leq p\leq \infty, \alpha\geq 0$ and $\gamma\in [0, 1)$, we define the norm
of a kernel  $K:\R\times\R\to\C$ by
\begin{equation*}
\|K\|_{{\mathcal W}_{p,\alpha}^\gamma}=
\left\{
\begin{array}{ll}
 \max\Big(\sup_{x\in \R} \big\|K(x,\cdot)\nu_\alpha(x,\cdot)\big\|_{L^p(\R)},\
\sup_{y\in \R} \big\|K(\cdot,y)\nu_\alpha(\cdot,y)\big\|_{L^p(\R)}\Big)  & {\rm  if }\ \gamma =0
\\
\|K\|_{{\mathcal W}_{p,\alpha}^0}+\sup_{0<\delta\le 1} \delta^{-\gamma}
\|\omega_\delta(K)\|_{{\mathcal W}_{p,\alpha}^0} & {\rm if } \ 0 < \gamma <1,
\end{array}
\right.
\end{equation*}
where  the modulus of continuity of the kernel $K$ is defined by
\begin{equation*}
\omega_\delta(K)(x,y):=\sup_{|x^\prime|\leq \delta, |y^\prime|\le \delta}|K(x+x^\prime, y+y^\prime)-K(x,y)|,\quad \text{ for all } x, y\in \R,
\end{equation*}
and $\nu_\alpha(x, y)= (1+|x-y|)^\alpha,  x, y\in \R$. Consider the set ${\mathcal W}_{p,\alpha}^\gamma$ of integral operators
\begin{equation*}
Tf(x)=\int_{{\R}} K_T(x,y) f(y) dy, \quad f \in L^2(\R),
\end{equation*}
whose kernels $K_T$ satisfy $\|K_T\|_{{\mathcal W}^\gamma_{p, \alpha}}<\infty$, and define
$$
\|T\|_{{\mathcal W}_{p,\alpha}^\gamma}:=
\|K_T \|_{{\mathcal W}_{p,\alpha}^\gamma}, \ T\in {\mathcal W}_{p,\alpha}^\gamma.
$$
Integral operators in ${\mathcal W}_{p, \alpha}^\gamma$ have their kernels being H\"older continuous of order $\gamma$ and having off-diagonal polynomial decay of order $\alpha$. For $1\leq p\leq \infty$, $0\leq \gamma<1$ and $\alpha>1-1/p$, one may verify that ${\mathcal W}_{p, \alpha}^\gamma$ are Banach $^*$-subalgebras of ${\mathcal B}(L^2)$ under operator composition. Furthermore, thanks to \cite[Theorem 3.3]{ShSu20}, we have the following.

\begin{prop}[Shin-Sun \cite{ShSu20}]
    Let $1\leq p\leq \infty$, $0\leq \gamma<1$, and $\alpha>1-1/p$. Denote $$f(p,\gamma,\alpha)=\frac{\alpha+\gamma+1/p}{(1+\gamma)(\alpha+1/p).}$$ Then ${\mathcal W}_{p, \alpha}^\gamma$ is a $(4,4-f(p,\gamma,\alpha),f(p,\gamma,\alpha))$-differential subalgebra of $\mathbb B(L^2(\R))$.
\end{prop}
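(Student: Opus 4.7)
The statement is essentially a restatement of \cite[Theorem 3.3]{ShSu20}: after identifying $\|T^4\|_{{\mathcal W}_{p,\alpha}^\gamma}$ as a single-term instance of their bound, the $(4,4-f,f)$-differential inequality
\begin{equation*}
\|T^4\|_{{\mathcal W}_{p,\alpha}^\gamma} \leq C\, \|T\|_{{\mathcal W}_{p,\alpha}^\gamma}^{4-f(p,\gamma,\alpha)} \|T\|_{\mathbb B(L^2(\R))}^{f(p,\gamma,\alpha)}
\end{equation*}
is precisely their content. The plan is to verify the estimate by adapting the kernel-level analysis carried out in that article.

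The first step is to write the kernel of $T^4$ as the triple integral
\begin{equation*}
K_{T^4}(x,y) = \int\!\!\!\int\!\!\!\int K_T(x,u_1)K_T(u_1,u_2)K_T(u_2,u_3)K_T(u_3,y)\, du_1\, du_2\, du_3,
\end{equation*}
and to absorb the polynomial weight using sub-additivity, namely $\nu_\alpha(x,y)\leq \nu_\alpha(x,u_1)\nu_\alpha(u_1,u_2)\nu_\alpha(u_2,u_3)\nu_\alpha(u_3,y)$. This reduces the weighted pointwise estimate to a four-fold convolution of weighted kernels. The second step is a diagonal-strip truncation: for a parameter $R>0$, decompose $T=T_R+T^R$ where the kernel of $T_R$ is supported on $\{|x-y|\leq R\}$. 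The tail contributes a factor of $R^{-(\alpha+1/p-1)}\|T\|_{{\mathcal W}_{p,\alpha}^0}$ coming from the weight, while the band admits a Schur-type bound of the form $\|T_R\|_{{\mathcal W}_{p,\alpha}^0}\lesssim R^{\alpha+1/p}\|T\|_{\mathbb B(L^2(\R))}$ obtained by applying Cauchy--Schwarz strip-by-strip. Expanding $T^4=(T_R+T^R)^4$ and optimizing over $R$ produces the critical exponent $f(p,\gamma,\alpha)$.

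For the Hölder piece involving $\omega_\delta(K_{T^4})$, I would transfer the increment onto one of the outer factors: the difference $K_T(x+x',\cdot)-K_T(x,\cdot)$ is controlled by $\delta^\gamma$ times a localized kernel, directly from the definition of $\|\cdot\|_{{\mathcal W}_{p,\alpha}^\gamma}$. The remaining three factors and the middle convolutions are then handled exactly as in the $\gamma=0$ case, and the same truncation--and--optimization scheme applies with a shifted exponent.

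The main obstacle is the precise bookkeeping in the optimization over $R$, so that the exponent of $\|T\|_{\mathbb B(L^2(\R))}$ lands on the specific fraction $\tfrac{\alpha+\gamma+1/p}{(1+\gamma)(\alpha+1/p)}$. The factor $(1+\gamma)$ in the denominator encodes an additional gain from the Hölder regularity, traded against the $R^{\alpha+1/p}$ growth of the band norm. Since this arithmetic is worked out in full generality in the proof of \cite[Theorem 3.3]{ShSu20}, the cleanest route in this article is to invoke their inequality directly and rewrite it in the $(k,p,q)$-framework of the previous section.
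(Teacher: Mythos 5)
The paper offers no proof of this proposition at all --- it is quoted verbatim from \cite[Theorem 3.3]{ShSu20} --- and your bottom line, namely to invoke that theorem directly and record it in the $(k,p,q)$-language of Section \ref{exx}, is exactly what the paper does. Your intermediate sketch is a reasonable reconstruction of Shin and Sun's argument, but note that the band estimate $\|T_R\|_{{\mathcal W}_{p,\alpha}^0}\lesssim R^{\alpha+1/p}\|T\|_{\mathbb B(L^2(\R))}$ cannot be obtained by Cauchy--Schwarz alone for integral operators (unlike the matrix case, the sup-norm of a kernel is not dominated by the operator norm of the associated operator); it is precisely the H\"older continuity of the kernel that bridges the sup-norm and the operator norm, and this comparison --- not the treatment of the $\omega_\delta$ term --- is where the factor $(1+\gamma)$ in the exponent actually originates.
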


In particular, the algebra ${\mathcal W}_{p, \alpha}^\gamma$ is an example of \eqref{4diff}.

\section{Aspects of harmonic analysis}

Now that we have established the relevant definition and shown a plenitude of examples, it is time to develop the systematic study that we promised in the introduction.

\subsection{Symmetry}\label{symm} 

In what follows, $\widetilde \A$ will denote the minimal unitization of $\A$ (if $\A$ was already unital, we simply set $\widetilde \A=\A$). Symmetry is a spectral property, so it seems natural to fix our notations for the spectrum and the spectral radius now. As usual, 
$$
{\rm Spec}_{\A}(a)=\{\lambda\in\CC\mid a-\lambda1\textup{ is not invertible in }\widetilde\A\}
$$
will denote the spectrum of an element $a\in\A$, while 
$$
\rho_\A(a)=\sup\{|\lambda|\mid \lambda\in {\rm Spec}_{\A}(a) \}
$$ 
denotes its spectral radius. Gelfand's formula for the spectral radius also gives the following, very useful, equality
$$
\rho_\A(a)=\lim_{n\to\infty} \norm{a^{n}}_{\A}^{1/n}.
$$ 
\begin{defn} \label{symmetric}
A Banach $^*$-algebra $\mathfrak A$ is called {\it symmetric} if the spectrum of $a^*a$ is non-negative for every $a\in\mathfrak A$.
\end{defn}

In fact, the celebrated Shilari-Ford theorem says that a Banach $^*$-algebra $\mathfrak A$ is symmetric if and only if the spectrum of any self-adjoint element is real \cite[Theorem 11.4.1]{Pa01}. 

The idea now is to prove that $(k,p,q)$-differential subalgebras are symmetric. For that purpose, we recall the famous lemma of Barnes-Hulanicki \cite{Ba90}.

\begin{lem}[Barnes-Hulanicki]\label{BH}
Let $\A$ be a Banach $^*$-algebra, $S$ a $^*$-subalgebra of $\A$ and $\pi : \A\to \BofH$ a faithful $^*$-representation. If $\A$ is unital, we assume that the inclusion $\pi(\A)\subset \BofH$ is unital.
If
$$
\rho_\A(a)=\|\pi(a)\|_{\BofH},
$$
for all $a\in S_{\rm sa}$, then
$$
{\rm Spec}_\A(a)={\rm Spec}_{\BofH}(\pi(a))
$$
for every $a\in S$.
\end{lem}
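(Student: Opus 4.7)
The plan is to prove the two set inclusions separately. The containment ${\rm Spec}_{\BofH}(\pi(a)) \subseteq {\rm Spec}_\A(a)$ is standard, as the canonical unital extension $\widetilde\pi: \widetilde\A \to \BofH$ is a unital $^*$-homomorphism and hence maps invertibles to invertibles. For the reverse containment, I would first reduce to self-adjoint elements: given $a \in S$ and $\lambda \notin {\rm Spec}_{\BofH}(\pi(a))$, both $(\pi(a)-\lambda)^*(\pi(a)-\lambda)$ and $(\pi(a)-\lambda)(\pi(a)-\lambda)^*$ are positive and invertible in $\BofH$. Since $(a-\lambda)^*(a-\lambda) - |\lambda|^2 = a^*a - \lambda a^* - \overline{\lambda} a \in S_{\rm sa}$ (and similarly for the mirrored product), it suffices to prove that ${\rm Spec}_\A(b) = {\rm Spec}_{\BofH}(\pi(b))$ for every $b \in S_{\rm sa}$. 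Applied to these shifted self-adjoint elements, the self-adjoint statement delivers left and right inverses for $a-\lambda$ in $\widetilde\A$, and hence full invertibility.

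For the self-adjoint reduction, the key observation is that for every real-coefficient polynomial $p$ with $p(0)=0$ one can write $p(b)=b\cdot q(b)$ for some real-coefficient polynomial $q$, so $p(b)$ is a product of commuting self-adjoint elements and therefore $p(b)\in S_{\rm sa}$. The hypothesis, combined with the spectral mapping theorem, then yields
\[
\max_{z\in {\rm Spec}_\A(b)}|p(z)| \;=\; \rho_\A(p(b)) \;=\; \|\pi(p(b))\|_{\BofH} \;=\; \max_{t\in {\rm Spec}_{\BofH}(\pi(b))}|p(t)|.
\]
If $\lambda_0\in {\rm Spec}_\A(b)\setminus{\rm Spec}_{\BofH}(\pi(b))$ is real and nonzero, a Weierstrass approximation of a bump that vanishes at $0$ and on ${\rm Spec}_{\BofH}(\pi(b))$ but is large at $\lambda_0$ produces a real-coefficient polynomial with $p(0)=0$ that contradicts the displayed equality. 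A non-real $\lambda_0$ reduces to the real case: since $b-\lambda_0$ and $b-\overline{\lambda_0}$ commute, non-invertibility of $b-\lambda_0$ forces non-invertibility of $(b-\lambda_0)(b-\overline{\lambda_0})=b^2-2\mathrm{Re}(\lambda_0)b+|\lambda_0|^2$, which places $-|\lambda_0|^2$ into ${\rm Spec}_\A(b^2-2\mathrm{Re}(\lambda_0)b)$, while the corresponding element in $\BofH$ has spectrum contained in $[-\mathrm{Re}(\lambda_0)^2,\infty)$ and strictly excludes $-|\lambda_0|^2$, contradicting the real, nonzero case.

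The main obstacle is the residual boundary case $\lambda_0=0$, i.e., when $\pi(b)$ is invertible in $\BofH$ but $b$ itself fails to be invertible in $\widetilde\A$. Every admissible polynomial above vanishes at $0$, so the separation-by-polynomials strategy cannot distinguish the two algebras at this point. Here I would invoke the faithfulness of $\pi$ together with its automatic boundedness (derivable from the hypothesis via the $C^*$-estimate $\|\pi(a)\|^2=\|\pi(a^*a)\|=\rho_\A(a^*a)\leq \|a^*a\|_\A$): if $b$ were a topological zero divisor in $\widetilde\A$, realized by a sequence of unit vectors $(x_n)$ with $\|bx_n\|_\A\to 0$, then boundedness of $\pi(b)^{-1}$ would force $\pi(x_n)\to 0$ in $\BofH$, and a spectral-permanence argument inside the commutative $^*$-subalgebra of $\widetilde\A$ generated by $b$ would combine with faithfulness of $\pi$ to produce the required contradiction. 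This boundary case is where I expect the technical delicacy of the proof to concentrate.
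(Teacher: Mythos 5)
The paper offers no proof of this lemma at all---it is quoted verbatim from Barnes \cite{Ba90}---so your proposal can only be measured against the standard argument in the literature, which it essentially reproduces: the easy inclusion ${\rm Spec}_{\BofH}(\pi(a))\subseteq{\rm Spec}_\A(a)$ via the unital extension $\widetilde\pi$, the reduction of a general $a\in S$ to the self-adjoint elements $a^*a-\lambda a^*-\overline{\lambda}a\in S_{\rm sa}$, and the separation of ${\rm Spec}_{\widetilde\A}(b)$ from the complement of $K={\rm Spec}_{\BofH}(\pi(b))$ by real polynomials with $p(0)=0$, combined with the spectral mapping theorem and the hypothesis $\rho_\A(p(b))=\|\pi(p(b))\|$. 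All of that, including the treatment of non-real spectral values through $(b-\lambda_0)(b-\overline{\lambda_0})$, is correct and is the standard route; it delivers exactly ${\rm Spec}_{\widetilde\A}(b)\subseteq K\cup\{0\}$ for $b\in S_{\rm sa}$.

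The genuine gap is the point $0$, which you rightly flag but do not close, and the sketch you offer would not close it as written. A topological-zero-divisor argument produces norm-one elements $x_n$ with $\|bx_n\|_\A\to0$ and hence $\pi(x_n)\to0$; but faithfulness of $\pi$ does not make $\pi$ bounded below, so $\|x_n\|_{\widetilde\A}=1$ together with $\pi(x_n)\to0$ is not yet a contradiction---this is precisely the failure mode that has to be excluded, and ``spectral permanence plus faithfulness'' is not an argument. A way to finish that actually works: since ${\rm Spec}_{\widetilde\A}(b)\subseteq K\cup\{0\}$ with $K$ compact and $0\notin K$, the point $0$ is isolated in ${\rm Spec}_{\widetilde\A}(b)$, so the Riesz idempotent $e=\tfrac{1}{2\pi i}\oint_{|z|=\varepsilon}(z-b)^{-1}\,\d z$ is a nonzero element of $\widetilde\A$ commuting with $b$; because $\pi$ is automatically continuous (as you note), $\widetilde\pi$ passes through the contour integral and $\widetilde\pi(e)=0$, since $\pi(b)$ has no spectrum in $|z|\le\varepsilon$. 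Writing $e=x+\mu 1$ with $x\in\A$: if $\mu=0$ then $e\in\A$ and faithfulness forces $e=0$, a contradiction; if $\mu\neq0$ then $\pi(-\mu^{-1}x)=I$, and faithfulness shows $-\mu^{-1}x$ is a two-sided unit for $\A$, so $\A$ is unital, $\widetilde\A=\A$, the inclusion is unital by hypothesis, and the same idempotent lies in $\A$ and is killed by the faithful $\pi$---again a contradiction. So the lemma is true as stated, but your boundary case needs this (or an equivalent) argument rather than the zero-divisor heuristic.
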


\begin{thm}\label{symmetry}
    Let $(k,p,q)$-differential subalgebra of $\B$. Then
    $$
{\rm Spec}_\A(a)={\rm Spec}_{\B}(a)
$$
for all $a\in\A$. In particular, $\A$ is symmetric.
\end{thm}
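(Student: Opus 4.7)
The strategy is to apply the Barnes--Hulanicki lemma (Lemma \ref{BH}) with $S=\A$ and $\pi$ obtained by composing the inclusion $\A\hookrightarrow\B$ with a faithful $^*$-representation $\B\to\BofH$ (which exists by Gelfand--Naimark, since $\B$ is a $C^*$-algebra). Under this choice $\norm{\pi(a)}_{\BofH}=\norm{a}_\B$ for all $a\in\A$, so the substantive task is to verify the Barnes--Hulanicki hypothesis: $\rho_\A(a)=\norm{a}_\B$ for every self-adjoint $a\in\A$. Once that is established, the lemma gives ${\rm Spec}_\A(a)={\rm Spec}_\B(a)$ for all $a\in\A$, and symmetry drops out immediately, since for any $a\in\A$ the element $a^*a$ is self-adjoint and ${\rm Spec}_\B(a^*a)\subset[0,\infty)$ by the $C^*$-property of $\B$.

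For the upper bound on $\rho_\A(a)$, I would feed the iterated inequality from the preceding lemma into Gelfand's formula. Taking the $k^n$-th root gives
$$
\norm{a^{k^n}}_\A^{1/k^n}\leq C^{n/k^n}\,\norm{a}_\A^{(p/k)^n}\,\norm{a}_\B^{1-(p/k)^n}.
$$
Because $k\geq 2$ we have $n/k^n\to 0$, and because $q>0$ forces $p<k$ we have $(p/k)^n\to 0$. Hence the right-hand side converges to $\norm{a}_\B$. Since Gelfand's formula asserts $\rho_\A(a)=\lim_n\norm{a^n}_\A^{1/n}$, the value along the subsequence $\{k^n\}$ is also $\rho_\A(a)$, yielding
$$
\rho_\A(a)\leq\norm{a}_\B\qquad\text{for every }a\in\A.
$$
This step is the main content of the proof and the place where the $(k,p,q)$-differential hypothesis is actually used.

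For the matching lower bound, I would invoke two general facts: first, since $\A\subset\B$ as Banach $^*$-algebras with common unit (after unitization if needed), any element invertible in $\widetilde\A$ is invertible in $\widetilde\B$, so ${\rm Spec}_\B(a)\subset{\rm Spec}_\A(a)$ and therefore $\rho_\B(a)\leq\rho_\A(a)$; second, $\B$ being a $C^*$-algebra gives $\rho_\B(a)=\norm{a}_\B$ whenever $a=a^*$. Combining these with the upper bound produces $\rho_\A(a)=\norm{a}_\B$ for every self-adjoint $a\in\A$, which is exactly the Barnes--Hulanicki hypothesis.

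With the equality $\rho_\A(a)=\norm{\pi(a)}_{\BofH}$ verified on $\A_{\rm sa}$, Lemma \ref{BH} yields ${\rm Spec}_\A(a)={\rm Spec}_\B(a)$ for every $a\in\A$, which is the first conclusion. For symmetry, specialize to $a^*a$: since $a^*a$ is self-adjoint in $\B$ and $\B$ is a $C^*$-algebra, ${\rm Spec}_\B(a^*a)\subset[0,\infty)$, and the spectral equality transports this to $\A$. The only mild care needed throughout is to keep the unitization explicit when $\A$ is nonunital, but since the iterated inequality and Gelfand's formula both pass to $\widetilde\A$ without change, this is routine.
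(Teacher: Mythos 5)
Your proposal is correct and follows essentially the same route as the paper: establish $\rho_\A(a)=\rho_\B(a)$ (equivalently $\rho_\A(a)=\norm{a}_\B$ on self-adjoint elements) via Gelfand's formula and the differential inequality, then invoke the Barnes--Hulanicki lemma and the positivity of ${\rm Spec}_\B(a^*a)$. The only cosmetic difference is that you feed the iterated inequality $\|a^{k^n}\|_\A\le C^n\norm{a}_\A^{p^n}\norm{a}_\B^{k^n-p^n}$ into the limit along the subsequence $k^n$, whereas the paper applies the single-step inequality to the powers $a^n$ and extracts $\rho_\A(a)\le\rho_\A(a)^{p/k}\rho_\B(a)^{q/k}$; both computations are valid and give the same conclusion.
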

\begin{proof}
    By the GNS construction, we can find a (necessarily isometric) faithful $^*$-representation $\pi : \B\to \BofH$ and so we identify $\B$ with $\pi(\B)$. Then we use the definition of differential subalgebra to compute the spectral radius using Gelfand's formula. Indeed, for $a\in\A$ and $n\in \Z^+$, one has
    $$
    \|a^{kn}\|_\A\leq C\|a^n\|_\A^{p}\|a^n\|_\B^{{k}-p},
    $$
    so 
    $$
    \rho_\A(a)=\lim_{n\to\infty} \|a^{kn}\|_{\A}^{1/(kn)}\leq \lim_{n\to\infty}C^{1/(kn)}\|a^n\|_\A^{p/(kn)}\|a^n\|_\B^{q/(kn)}=\rho_\A(a)^{p/k}\rho_\B(a)^{q/k},
    $$
    implying that $\rho_\A(a)^{1-p/k}=\rho_\A(a)^{q/k}\leq \rho_\B(a)^{q/k}$ and $\rho_\A(a)\leq \rho_\B(a)$. The reverse inequality trivially holds, so we get $\rho_\A(a)=\rho_\B(a)$. For a self-adjoint element $a$, one has $\rho_\B(a)=\|a\|_\B$, so the proof finishes by invoking the Barnes-Hulanicki Lemma. 
\end{proof}

The result we just obtained says that the spectrum of an element $a\in\A\subset \B$ is independent of the algebra of reference. This implies that the holomorphic functional calculi of both algebras coincide. The idea behind the following section is to prove the same about a slightly bigger functional calculus.

\subsection{Stability under the functional calculus of \texorpdfstring{$\B$}-}\label{funcalc}

As in \cite{Fl25,Fl24}, the growth of $1$-parameter unitary groups plays a major role in the development of our results. In order to include the non-unital case, we are forced to consider the entire function $u:\mathbb C\to \mathbb C$, given by 
\begin{equation*}\label{functions}
    u(z)=e^{iz}-1=\sum_{k=1}^{\infty} \frac{i^k z^k}{k!}
\end{equation*} 
and replace $e^{ia}$ by $u(a)$ to avoid unnecessary unitizations. 

The next lemma was inspired by \cite[Lemma 3]{Py82} and \cite[Lemma 4.7]{Fl25}, which are related to the cases $k=2$ and $k=4$, respectively. In fact, those lemmas were made for slightly worse-behaved algebras and are slightly less pleasant to prove (compare condition \emph{(ii)}).

\begin{lem}\label{asymp}
    Let $k\in\Z^+$, $1<\gamma<k$ and let $\{a_n\}_{n=1}^\infty$ be a sequence of non-negative real numbers such that \begin{enumerate}
        \item[(i)] $a_{n+m}\leq a_na_m$ and
        \item[(ii)] $a_{kn}\leq a_n^\gamma,$
    \end{enumerate} for all $n$. Then for all $\tau>\log_{k}(\gamma)$, one has $a_n=O(e^{n^\tau}),\textup{ as }n\to\infty$.
\end{lem}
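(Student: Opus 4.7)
The plan is to exploit condition (ii) iteratively along the geometric subsequence $n=k^j$, and then use the submultiplicativity condition (i) to extend the resulting estimate to every positive integer via its base-$k$ expansion. First I would dispose of the degenerate case $a_1<1$: in that situation (i) forces $a_n\leq a_1^n\to 0$, and the conclusion is trivial. So assume $a_1\geq 1$, set $A=\log a_1\geq 0$, and observe that a one-line induction using (ii) yields $a_{k^j}\leq a_1^{\gamma^j}$, i.e.\ $\log a_{k^j}\leq A\gamma^j$ for every $j\geq 0$.

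Next, for an arbitrary positive integer $n$, I would write $n=\sum_{i=0}^{J} d_i k^i$ in base $k$, so that $d_i\in\{0,\dots,k-1\}$ and $J=\lfloor \log_k n\rfloor$. Subadditivity of $\log a_n$ (an immediate consequence of (i)) combined with the previous step gives
\[
\log a_n \;\leq\; \sum_{i=0}^{J} d_i \log a_{k^i} \;\leq\; A(k-1)\sum_{i=0}^{J}\gamma^i \;\leq\; \frac{A(k-1)\gamma}{\gamma-1}\,\gamma^{J} \;\leq\; C\, n^{\log_k\gamma},
\]
with $C=A(k-1)\gamma/(\gamma-1)$; the last inequality uses $\gamma^{J}\leq \gamma^{\log_k n}=n^{\log_k\gamma}$. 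Finally, for any $\tau>\log_k\gamma$, the function $n^{\tau-\log_k\gamma}$ tends to infinity, so $C\,n^{\log_k\gamma}\leq n^{\tau}$ for all $n$ sufficiently large, which promotes the preceding estimate to $a_n\leq e^{n^\tau}$ eventually, i.e.\ $a_n=O(e^{n^\tau})$.

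This is essentially the classical Pytlik-style argument and I do not foresee a real obstacle; the $k=2$ and $k=4$ variants are recorded as \cite[Lemma 3]{Py82} and \cite[Lemma 4.7]{Fl25}. The only point requiring some care is that the geometric sum $\sum_i\gamma^i$ collapses cleanly into the single power $n^{\log_k\gamma}$, which is exactly what forces the mild $\varepsilon$-loss in the exponent: one exchanges $\log_k\gamma$ for any strictly larger $\tau$ precisely in order to absorb the multiplicative constant $C$.
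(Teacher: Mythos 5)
Your argument is correct and is essentially the same as the paper's: iterate (ii) to get $a_{k^i}\leq a_1^{\gamma^i}$, then use the base-$k$ expansion of $n$ together with submultiplicativity, and absorb the resulting constant into the strictly larger exponent $\tau$. The only (minor) differences are that you sum the geometric series exactly, giving $\log a_n\leq C\,n^{\log_k\gamma}$ in place of the paper's slightly cruder bound with an extra $\log_k n$ factor, and that you explicitly dispose of the case $a_1<1$, where the paper's displayed chain of inequalities would otherwise reverse direction.
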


\begin{proof}
Because of \emph{(ii)}, we have, for $i\in\N$,
$$
a_{k^i}\leq a_{1}^{\gamma^i}.
$$ 
For a general $n\in\N$, we consider its $k$-adic expansion $n=\sum_{i=0}^m \epsilon_i k^i$, where $\epsilon_i\in\{0,\ldots, k-1\}$ and $\log_{k}(n)\leq  m<1+\log_{k}(n)$. Denoting $A=\ln(a_1)$, we see that 
\begin{equation}\label{boundd}
    a_n\leq \prod_{i=0}^m a_{k^i}^{\epsilon_i}\leq \prod_{i=0}^m a_1^{\epsilon_i\gamma^i}\leq a_1^{({k}-1)(m+1)\gamma^m}\leq e^{A(k-1)(2+\log_{k}(n))kn^{\log_{k}(\gamma)}}, 
\end{equation}
from which the desired property follows.
\end{proof}

\begin{prop}\label{growth}
    Let $\A$ be a $(k,p,q)$-differential subalgebra of $\B$. Then for every $a=a^*\in \A$, one has 
    $$
    \norm{u(ta)}_{\A}=O(e^{|t|^\tau}),\textup{ as }|t|\to\infty,
    $$ 
    for all $\tau>\log_{k}(\max\{k-1,p\})$.
\end{prop}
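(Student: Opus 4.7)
The plan is to reduce the continuous-time growth estimate to a discrete recursion that fits the hypotheses of Lemma \ref{asymp}. Fix $a=a^*\in\A$ and set $b_n=1+\|u(na)\|_\A$ for $n\in\Z^+$. First, the identity $e^{i(n+m)a}=e^{ina}e^{ima}$ translates via $u(z)=e^{iz}-1$ to
$$
u((n+m)a)=u(na)+u(ma)+u(na)u(ma),
$$
which immediately gives the submultiplicative estimate $b_{n+m}\le b_n b_m$, i.e., condition \emph{(i)} of Lemma \ref{asymp}.

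Next I would establish a bound of the form $b_{kn}\le D\, b_n^\gamma$ with $\gamma=\max\{k-1,p\}$ and a constant $D>0$. Expanding $u(kna)=(1+u(na))^k-1=\sum_{j=1}^{k}\binom{k}{j}u(na)^j$ and using that $e^{ina}$ is unitary in $\widetilde\B$ (so $\|u(na)\|_\B\le 2$), the $(k,p,q)$-differential condition controls the top-degree term:
$$
\|u(na)^k\|_\A\le C\,\|u(na)\|_\A^p\|u(na)\|_\B^q\le C\cdot 2^q\, b_n^p.
$$
The remaining terms are handled by plain submultiplicativity: $\|u(na)^j\|_\A\le b_n^j\le b_n^{k-1}$ for $j\le k-1$, since $b_n\ge 1$. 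Adding everything up and absorbing $b_n^p$ and $b_n^{k-1}$ into $b_n^\gamma$ yields the desired $b_{kn}\le D\,b_n^\gamma$ valid for every $n$.

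To eliminate the multiplicative constant $D$ from this recursion, I would rescale to $c_n:=Mb_n$ with $M=\max\{1,D^{1/(\gamma-1)}\}$ (assuming $\gamma>1$; the boundary case is handled in the next paragraph). A direct computation gives $c_{n+m}\le c_n c_m$ and $c_{kn}\le c_n^\gamma$, so Lemma \ref{asymp} applies and yields $c_n=O(e^{n^\tau})$ for every $\tau>\log_{k}(\gamma)$, whence $\|u(na)\|_\A=O(e^{n^\tau})$.

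Finally, to pass from integer to real $t$, I would write $t=n+s$ with $n=\lfloor|t|\rfloor$ and $s\in[0,1)$, apply the addition identity once more, and use $\sup_{s\in[0,1]}\|u(sa)\|_\A<\infty$ (continuity of $s\mapsto u(sa)$); for $t<0$, the identity $u(ta)^*=u(-ta)$ together with the assumed continuity of the involution on $\A$ transfers the estimate. The main delicate point is the boundary case $\gamma=1$ (which occurs precisely when $k=2$ and $p\le 1$): Lemma \ref{asymp} requires $\gamma>1$, but replacing $\gamma$ by any $\gamma'\in(1,k)$ preserves the bound $b_{kn}\le D\,b_n^{\gamma'}$ (again because $b_n\ge 1$), and letting $\gamma'\downarrow 1$ recovers every $\tau>0=\log_{k}(1)$.
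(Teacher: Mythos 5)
Your proposal is correct and follows essentially the same route as the paper: the same submultiplicativity of $1+\|u(na)\|_\A$, the same expansion of $u(kna)$ into powers of $u(na)$ with the $(k,p,q)$-condition applied to the top term and plain submultiplicativity to the rest, the same rescaling to absorb the constant before invoking Lemma \ref{asymp}, and the same reduction from real $t$ to integer $n$. Your version is in fact slightly cleaner in two spots: working with $b_n\ge 1$ avoids the paper's case split on $\|u(na)\|_\A\lessgtr 1$, and you explicitly handle the boundary case $\gamma=\max\{k-1,p\}=1$ by perturbing to $\gamma'>1$, a point the paper passes over more quickly.
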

\begin{proof}
    We will, of course, make use of Lemma \ref{asymp}. For that matter, let $\gamma>1$ be such that $\max\{k-1,p\}\leq\gamma\leq k$. Consider the sequence $a_n=D(\norm{u(na)}_{\A}+1)$, with $D\geq 1$ to be determined later. One then has 
    \begin{align*}
        a_{n+m}&=D(\norm{u(na)u(ma)+u(na)+u(ma)}_{\A}+1) \\
        &\leq D(\norm{u(na)}_{\A}+1)(\norm{u(ma)}_{\A}+1) \leq a_na_m.
    \end{align*} 
    Proving part \emph{(ii)} is a lot more involved. We need to make some heavy computations, so let us introduce the following auxiliary notations 
    $$
    z:=u(na)\quad\text{ and }\quad  x:=e^{ina}=z+1.
    $$
    Also note that
    \begin{equation}\label{combinatorics}
        \sum_{\beta=0}^{k-2}\binom{k-1}{\beta}+ \sum_{\alpha=0}^{k-2}\sum_{\beta=0}^\alpha\binom{\alpha}{\beta}=2^{k}-2.
    \end{equation}
    Using the above-mentioned notation, we see that
    \begin{align*}
        u(kna)&=(e^{ina})^{k}-1=x^{k}-1=(x-1)\sum_{\alpha=0}^{k-1}x^\alpha  \\
        &=z\sum_{\alpha=0}^{k-1}(z+1)^\alpha=\sum_{\alpha=0}^{k-1}\sum_{\beta=0}^{\alpha} \binom{\alpha}{\beta}z^{\beta+1}  \\
        &=\sum_{\beta=0}^{k-1} \binom{k-1}{\beta}z^{\beta+1}+\sum_{\alpha=0}^{k-2}\sum_{\beta=0}^{\alpha} \binom{\alpha}{\beta}z^{\beta+1} \\
        &=z^{k}+\sum_{\beta=0}^{k-2} \binom{k-1}{\beta}z^{\beta+1}+\sum_{\alpha=0}^{k-2}\sum_{\beta=0}^{\alpha} \binom{\alpha}{\beta}z^{\beta+1}
    \end{align*}
    Now we consider separately the cases $\norm{u(na)}_{\A}\leq 1$ and $\norm{u(na)}_{\A}>1$. In the first case, we have
    \begin{align*}
        a_{kn}&=D(\|u(kna)\|_{\A}+1) \\
        &= D\Big(\Big\|z^{k}+\sum_{\beta=0}^{k-2} \binom{k-1}{\beta}z^{\beta+1}+\sum_{\alpha=0}^{k-2}\sum_{\beta=0}^{\alpha} \binom{\alpha}{\beta}z^{\beta+1}\Big\|_{\A}+1\Big)  \\
        &\leq D\Big(2+\sum_{\beta=0}^{k-2} \binom{k-1}{\beta}+\sum_{\alpha=0}^{k-2}\sum_{\beta=0}^{\alpha} \binom{\alpha}{\beta}\Big)\\
        &\overset{\eqref{combinatorics}}{=} 2^{k}D\\
        &\leq 2^{k}D(\norm{u(na)}_{\A}+1)^{\gamma}.
    \end{align*}
    Now, if $\norm{u(na)}_{\A}> 1$, then we have
    \begin{align*}
        a_{kn}&=D\Big(\Big\|z^{k}+\sum_{\beta=0}^{k-2} \binom{k-1}{\beta}z^{\beta+1}+\sum_{\alpha=0}^{k-2}\sum_{\beta=0}^{\alpha} \binom{\alpha}{\beta}z^{\beta+1}\Big\|_{\A}+1\Big) \\
        &\leq D\Big(\|{u(na)^{k}}\|_\A+\sum_{\beta=0}^{k-2} \binom{k-1}{\beta}\|{u(na)}\|_\A^{\beta+1}+\sum_{\alpha=0}^{k-2}\sum_{\beta=0}^{\alpha} \binom{\alpha}{\beta}\norm{u(na)}_\A^{\beta+1}+1\Big)  \\
        &\overset{\eqref{combinatorics}}{\leq } D\Big(\|{u(na)^{k}}\|_\A+(2^{k}-2)\norm{u(na)}_\A^{k-1}+ 1\Big)  \\
        &\overset{\eqref{kpq}}{\leq } D(2^qC\norm{u(na)}_{\A}^p+(2^{k}-1)\norm{u(na)}_\A^{k-1}) \\
        &\leq D(2^qC+2^{k}-1)\norm{u(na)}_\A^\gamma.
    \end{align*} 
    Therefore, setting $D^{\gamma-1}\geq \max\{2^{k}, 2^qC+2^{k}-1\}$, one concludes that $a_{kn}\leq a_n^\gamma$, irrespective of the value of $\norm{u(na)}_{\A}$. By invoking Lemma \ref{asymp}, we obtain the conclusion
    \begin{equation}\label{expan}
        \norm{u(na)}_{\A}=O(e^{n^\tau}),\textup{ as }n\to\infty.
    \end{equation}
    Now that we have the desired result for every $n\in\N$, it remains to argue that the general result follows from that. Indeed, for $t>0$ (the case $t<0$ is analogous), we take $n=\floor{t}$ and observe that 
    \begin{align}
        \norm{u(ta)}_\A&=\norm{u\big((t-n)a+na\big)}_\A \nonumber \\
        &=\norm{u((t-n)a)u(na)+u((t-n)a)+u(na)}_{\A} \nonumber \\
        &\leq \norm{u((t-n)a)}_\A\norm{u(na)}_\A+\norm{u(na)}_\A+\norm{u((t-n)a)}_\A \nonumber \\
        &\leq (e^{\norm{a}_\A}+2)\norm{u(na)}_\A+e^{\norm{a}_\A}+1, \label{expan1}
    \end{align}
    so $\norm{u(ta)}_\A$ is $O(\norm{u(\floor{t}a)}_\A)=O(e^{{\floor{t}}^\tau})=O(e^{t^\tau})$, as $t\to\infty$. The proof is finished.    \end{proof}

\begin{rem}\label{joingrowth}
    The above proposition shows that, for a fixed $a\in \A_{\rm sa}$ and a large enough $|t|$, there is a constant $B>0$ so that
    \begin{equation}\label{multiple}
        \norm{u(ta)}_{\A}\leq Be^{|t|^{\tau}}
    \end{equation}
    We remark that the proofs of Proposition \ref{growth} and Lemma \ref{asymp} reveal that the constant $B$ only depends on the quantity $\norm{a}_{\A}$. Indeed, the asymptotics \eqref{expan} are obtained from the usage of Lemma \ref{asymp}. However, if one looks at the proof of Lemma \ref{asymp}, the only constant that is not fixed by the data $k,p,q,C$ is the $A$ in \eqref{boundd}. But in the context of Proposition \ref{growth}, one has 
    $$
    A=\ln(\|u(a)\|_\A+1)+\ln(D)\leq \ln(e^{\|a\|_\A}+1)+\ln(D).
    $$
    The passage to a general $t\in \R$ done in \eqref{expan1} only depends on $\|a\|_\A$ and $\norm{u(n a)}_\A$.
    
    It therefore follows, from the same reasoning, that the growth conclusion \eqref{multiple} can be obtained simultaneously for a family of elements $a\in F\subset\A_{\rm sa}$, provided that $\sup_{a\in F} \norm{a}_{\A}<\infty$.
\end{rem}

Before introducing our theorem about functional calculus, let us introduce some notation. Let $\A$ be a Banach algebra. $\A(a_1,\ldots, a_n)$ denotes the closed subalgebra of $\A$ generated by the elements $a_1,\ldots, a_n\in\A$. If $\A$ is a commutative Banach algebra with spectrum $\Delta$, then $\hat a\in C_0(\Delta)$ denotes the Gelfand transform of $a\in \A$. In particular, the Fourier transform of a complex function $f:\mathbb R\to\CC$ is 
$$
\widehat{f}(t)=\int_{\mathbb R}f(x)e^{-itx}\d x.
$$

\begin{defn}
    Let $\A$ be a semisimple commutative Banach algebra with spectrum $\Delta$. $\A$ is called {\it regular} if for every closed set $X\subset \Delta$ and every point $\omega\in \Delta\setminus X$, there exists an element $a\in \A$ such that $\hat a(\varphi)=0$ for all $\varphi\in X$ and $\hat a(\omega)\not=0$.
\end{defn}

Regularity means, quite literally, that the Banach algebra $\A$, when identified as a subalgebra of $C_0(\Delta)$, contains enough functions to separate points from closed subsets. At an intuitive level, this means that a functional calculus based on functions in $\A$ will be very rich. It is proven in \cite[Proposition 4.1.18]{Da00} that regular algebras are also `normal'. This is a very useful feature, so we record it in the next remark.

\begin{rem}\label{normal}
    Let $\A$ be a regular commutative Banach algebra with spectrum $\Delta$. Then, for every closed set $X\subset \Delta$ and every compact subset $Y\subset\Delta\setminus X$, there exists an element $a\in \A$ such that $\hat a\equiv 0$ in $X$ and $\hat a\equiv1$ in $Y$.
\end{rem}

\begin{ass}
    From now on and until the end of the paper, given the data $(k,p,q)$, $\tau$ will always denote a positive number such that $1>\tau>\log_{k}(\max\{k-1,p\})$. Furthermore, $\mathfrak D_\tau$ will denote the algebra of functions 
$$
f(x)=\frac{1}{2\pi}\int_{\mathbb R} \widehat{f}(t)e^{itx}\d t,
$$ 
where $ \widehat{f}\in L^1(\R)$ and 
$$
    \norm{f}_{\tau}=\int_{\R}|\widehat{f}(t)|e^{|t|^{\tau}}\d t<\infty.
$$ 
Under the norm $\norm{\cdot}_{\tau}$, $\mathfrak D_\tau$ is a Banach subalgebra of $C_0(\R)$. As mentioned in the introduction, Domar proved that $\mathfrak D_\tau$ is a regular commutative Banach algebra \cite{Do56}.
\end{ass}

The following theorem is based on the smooth functional calculus construction attributed to Dixmier \cite[Lemme 7]{Di60} and Baillet \cite[Théorème 1]{MB79}. See also \cite[Theorem 2.5]{Fl24}.

\begin{thm}\label{DixBai}
    Let $\A$ be a $(k,p,q)$-differential subalgebra of $\B$. Let $1>\tau>\log_{k}(\max\{k-1,p\})$, $a=a^*\in \A$, and $f\in\mathfrak D_\tau$. Then the following are true.
    \begin{enumerate}
        \item[(i)] The following Bochner integral exists in $\widetilde{\A}(a,1)$: \begin{equation*}
            f(a)=\frac{1}{2\pi}\int_{\mathbb R} \widehat{f}(t)e^{ita}\d t.
        \end{equation*} Moreover, if $f(0)=0$, then $f(a)\in \A(a)$.
        \item[(ii)] For any non-degenerate $^*$-representation $\Pi:\widetilde{\A}(a,1)\to\BofH$, we have ${\Pi}\big(f(a)\big)=f\big(\Pi(a)\big)$. 
        \item[(iii)] ${\rm Spec}_{\widetilde{\A}(a,1)}\big(f(a)\big)=f\big({\rm Spec}_{\widetilde{\A}(a,1)}(a)\big)$.
        \item[(iv)] There is a (continuous) $^*$-homomorphism $\varphi_a:\mathfrak D_\tau\to \widetilde{\A}(a,1)$ defined by $\varphi_a(f)=f(a)$, such that $\varphi_a(g)=1$ if $g\equiv 1$ on a neighborhood of ${\rm Spec}_{\widetilde{\A}(a,1)}(a)$ and $\varphi_a(g)=a$ if $g(x)=x$ on a neighborhood of ${\rm Spec}_{\widetilde{\A}(a,1)}(a)$.
    \end{enumerate}
\end{thm}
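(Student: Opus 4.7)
The plan is to hinge everything on the growth estimate $\|e^{ita}\|_{\widetilde\A} \leq B e^{|t|^\tau}$, which is immediate from Proposition \ref{growth} together with the identity $e^{ita} = 1 + u(ta)$. Combined with $\|f\|_\tau < \infty$, this makes the integrand $t \mapsto \widehat f(t)\, e^{ita}$ norm-integrable in $\widetilde\A$, so the Bochner integral defining $f(a)$ converges absolutely. For (i), I will observe that each $e^{ita}$ lies in the unital closed subalgebra $\widetilde\A(a,1)$, being the norm-limit of its Taylor polynomials, so $f(a)$ belongs to this closed subspace. If $f(0) = 0$, Fourier inversion evaluated at $0$ gives $\int_\R \widehat f(t)\, \d t = 2\pi f(0) = 0$, which lets me rewrite $f(a) = \frac{1}{2\pi}\int \widehat f(t)\, u(ta)\, \d t$, and since $u(ta) \in \A(a)$ for every $t$, the integral lies in $\A(a)$.

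For (ii), any non-degenerate $^*$-representation $\Pi$ of the Banach $^*$-algebra $\widetilde\A(a,1)$ into $\BofH$ is automatically continuous, so $\Pi$ may be pulled inside the Bochner integral. Continuity also yields $\Pi(e^{ita}) = e^{it\Pi(a)}$ from the defining power series, and the resulting integral $\frac{1}{2\pi}\int \widehat f(t)\, e^{it\Pi(a)}\, \d t$ is exactly the formula expressing $f(\Pi(a))$ via the continuous functional calculus applied to the bounded self-adjoint operator $\Pi(a)$ (an immediate consequence of the spectral theorem and Fourier inversion).

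For the algebra structure in (iv), continuity reads $\|\varphi_a(f)\|_{\widetilde\A} \leq \frac{B}{2\pi}\|f\|_\tau$; multiplicativity $(fg)(a) = f(a)\, g(a)$ follows from $\widehat{fg} = (\widehat f * \widehat g)/(2\pi)$ via Fubini, which the bound $|s+r|^\tau \leq |s|^\tau + |r|^\tau$ (valid since $\tau < 1$) justifies; the $^*$-property reduces to $\widehat{\overline f}(t) = \overline{\widehat f(-t)}$ after the substitution $t \mapsto -t$. Part (iii) is then obtained by noting that $\widetilde\A(a,1)$ is itself a $(k,p,q)$-differential subalgebra of its $C^*$-closure $\mathcal C \subset \widetilde\B$ (the inequality \eqref{kpq} restricts cleanly). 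Theorem \ref{symmetry} applies, so $\widetilde\A(a,1)$ is symmetric and commutative, every character is a $^*$-character, and its spectrum equals the range of the Gelfand transform; applying (ii) to the one-dimensional representations $\chi$ produces $\chi(f(a)) = f(\chi(a))$, which is the desired spectral mapping.

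For the final clauses of (iv), I would pass to the $C^*$-closure $\mathcal C$ and pick a faithful $^*$-representation $\Pi$ of it. By (ii), $\Pi(g(a)) = g(\Pi(a))$, and the $C^*$-functional calculus on $\Pi(a)$ only sees values of $g$ on ${\rm Spec}(\Pi(a)) = {\rm Spec}_{\widetilde\A(a,1)}(a)$, so $\Pi(g(a)) = 1$ or $\Pi(a)$ in the two cases respectively; injectivity of $\widetilde\A(a,1) \hookrightarrow \mathcal C \hookrightarrow \BofH$ then upgrades this to $g(a) = 1$ or $g(a) = a$. The main obstacle I anticipate is the careful verification that $\widetilde\A(a,1)$ inherits the $(k,p,q)$-differential condition (including the possible unitization) and that the characters of $\widetilde\A(a,1)$ are controlled by those of $\mathcal C$ through symmetry; the remainder is essentially bookkeeping of Fourier-analytic identities and interchanges of integrals.
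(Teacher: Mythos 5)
Your proposal is correct and follows essentially the same route as the paper: part (i) is exactly the paper's argument (the growth estimate of Proposition \ref{growth} makes the integrand Bochner-integrable, and $f(0)=0$ lets one replace $e^{ita}$ by $u(ta)\in\A(a)$), while the paper simply defers (ii)--(iv) to the Dixmier--Baillet arguments of \cite[Theorem 2.5]{Fl24}, which are the same representation-pulled-inside-the-integral, convolution-theorem, and character computations you reproduce. The one obstacle you flag --- verifying that $\widetilde{\A}(a,1)$ inherits the $(k,p,q)$-differential condition --- can be sidestepped entirely: the inequality \eqref{kpq} restricts verbatim to $\A(a)$, Gelfand's formula is intrinsic to the norm, so Theorem \ref{symmetry} already gives symmetry and spectral invariance of $\A(a)$ in $\overline{\A(a)}^{\norm{\cdot}_\B}$, and symmetry passes to the unitization for free since ${\rm Spec}_{\widetilde{\A}(a,1)}(b+\lambda 1)={\rm Spec}(b)+\lambda$; that is all the character argument in (iii) and the faithful-representation argument in (iv) require.
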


\begin{proof} \emph{(i)} Because of Proposition \ref{growth}, there is a constant $A>0$ such that 
$$
\|e^{ita}\|_{\widetilde\A}\leq A(1+e^{|t|^{\tau}}),\quad \textup{ for all }t\in\mathbb R,
$$ 
while the hypothesis on $f$ implies that 
\begin{equation*}
    \int_{\mathbb R} \|\widehat{f}(t)e^{ita}\|_{\widetilde\A}\d t \leq A\int_{\mathbb R} |\widehat{f}(t)|(1+e^{|t|^{\tau}})\d t=A(\|\widehat f\|_{L^1(\R)}+\|f\|_\tau)\leq 2A\|f\|_\tau , 
\end{equation*} so $f(a)\in \widetilde\A$. If $f(0)=0$, then $0=\int_{\mathbb R} \widehat{f}(t)\d t$ so 
$$
f(a)=f(a)-\frac{1}{2\pi}\int_{\mathbb R} \widehat{f}(t)\d t=\frac{1}{2\pi}\int_{\mathbb R} \widehat{f}(t)(e^{ita}-1)\d t\in\A,
$$ 
since $e^{itb}-1\in\A$. 

The proofs of \emph{(ii)}, \emph{(iii)} and \emph{(iv)} work exactly as in \cite[Theorem 2.5]{Fl24}, so we avoid writing them again. \end{proof}

The above mentioned theorem should be interpreted like this: \emph{(i)} guarantees that the functions of $\mathfrak D_\tau$ act on the elements of $\A$, while \emph{(ii)} states that this action coincides with the continuous functional calculus in any $C^*$-completion of $\A$. \emph{(iii)} is the spectral mapping theorem and \emph{(iv)} is a useful observation.

\begin{defn}
    A reduced Banach $^*$-algebra $\A$ is called {\it locally regular} if there is a subset $R\subset \A_{\rm sa}$, dense in $\A_{\rm sa}$ and such that $\A(b)$ is regular, for all $a\in R$.
\end{defn}

\begin{thm}\label{locallyreg}
    Let $\A$ be a $(k,p,q)$-differential subalgebra of $\B$. Then $\A$ is locally regular.
\end{thm}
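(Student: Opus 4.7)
The plan is to prove the stronger conclusion that $\A(a)$ is a regular commutative Banach algebra for every $a \in \A_{\rm sa}$; local regularity of $\A$ will then follow immediately by taking $R = \A_{\rm sa}$.

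First I would establish a spectral picture. Since the $\B$-norm restricts to a $C^*$-norm on $\A(a)$, the algebra $\A(a)$ is itself a $(k,p,q)$-differential subalgebra of its $\B$-closure $\B_a := \cls{\A(a)}^{\|\cdot\|_\B}$, which is a commutative $C^*$-algebra. Theorem \ref{symmetry} applied to this inclusion shows that $\A(a)$ is symmetric and that $K := {\rm Spec}_{\A(a)}(a)$ coincides with the compact real set ${\rm Spec}_{\B_a}(a)$. Standard Gelfand theory then identifies the character space $\Delta(\A(a))$, via $\chi \mapsto \chi(a)$, with $K$ when $\A(a)$ is unital and with $K \setminus \{0\}$ otherwise, carrying in both situations the subspace topology from $\R$; in particular, whenever $X \subset \Delta(\A(a))$ is closed, the set $X \cup \{0\}$ is closed in $\R$.

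Next I would run the regularity argument. Fix a closed set $X \subset \Delta(\A(a))$ and a point $\omega \in \Delta(\A(a)) \setminus X$; observe that $\omega \neq 0$ in the non-unital case, because $0 \notin \Delta(\A(a))$ there. Since $\mathfrak D_\tau$ is regular by Domar's theorem, Remark \ref{normal} produces some $f \in \mathfrak D_\tau$ with $f \equiv 0$ on $X \cup \{0\}$ and $f(\omega) = 1$. The condition $f(0) = 0$ allows Theorem \ref{DixBai}(i) to place $f(a)$ inside $\A(a)$. Each $\chi \in \Delta(\A(a))$ extends to a character of the commutative symmetric Banach $^*$-algebra $\widetilde{\A}(a,1)$ by $\chi(1) = 1$, which is automatically a $^*$-representation, so Theorem \ref{DixBai}(ii) yields $\chi(f(a)) = f(\chi(a))$. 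Consequently the Gelfand transform $\widehat{f(a)}$ vanishes on $X$ and equals $1$ at $\omega$, establishing regularity of $\A(a)$.

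The main obstacle I anticipate is the bookkeeping surrounding the non-unital case: one must verify the topological identification $\Delta(\A(a)) \cong K \setminus \{0\}$, confirm that adjoining $\{0\}$ to $X$ preserves closedness in $\R$ without costing $\omega$, and check the symmetry of $\widetilde{\A}(a,1)$. Once these routine facts are secured, Domar's normality does all the analytic heavy lifting and the Dixmier-Baillet calculus from Theorem \ref{DixBai} transports the conclusion into the algebraic setting of $\A(a)$.
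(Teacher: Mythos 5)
Your argument is correct, but it takes a genuinely different route from the paper. The paper's proof is a three-line application of the Shilov--Domar non-quasi-analyticity criterion: Proposition \ref{growth} gives $\norm{u(ta)}_{\A}=O(e^{|t|^\tau})$ with $\tau<1$, hence $\int_{\R}\log(\norm{e^{ita}}_{\widetilde\A})(1+t^2)^{-1}\,\d t<\infty$, and a classical result (cited from \cite{Ne92}) then yields regularity of $\A(a)$ directly. You instead construct the separating Gelfand transforms by hand: regularity (normality) of $\mathfrak D_\tau$ supplies an $f$ vanishing on $X\cup\{0\}$ with $f(\omega)=1$, Theorem \ref{DixBai}(i) places $f(a)$ in $\A(a)$, and hermitian characters (via symmetry) let Theorem \ref{DixBai}(ii) evaluate $\chi(f(a))=f(\chi(a))$. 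Both proofs ultimately rest on the same growth estimate and on Domar's work; the paper's is shorter and outsources the analysis to an external criterion, while yours stays entirely inside the machinery already built in Sections \ref{symm}--\ref{funcalc} and exhibits explicit witnesses for regularity, at the cost of the character-theoretic bookkeeping you flag. One simplification for that bookkeeping: since $\A(a)$ is the closed algebra generated by $a$ alone (no unit adjoined), any character $\chi$ with $\chi(a)=0$ annihilates the dense set of polynomials in $a$ without constant term and hence vanishes identically; so $\Delta(\A(a))$ is identified with $K\setminus\{0\}$ in \emph{all} cases, $\omega\neq 0$ holds unconditionally, and your unital/non-unital case distinction (where the unital branch as you state it is slightly off when $0$ is an isolated point of $K$) can be dispensed with. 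You should also record that $\A(a)$ is semisimple (immediate from $\rho_{\A(a)}=\rho_{\B_a}$ and faithfulness of the $C^*$-norm), since the paper's definition of regularity presupposes it.
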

\begin{proof}
    We will prove that $\A(b)$ is regular, for all $a\in \A_{\rm sa}$, which is a little stronger than what we need. Indeed, using Proposition \ref{growth}, we get that $\norm{u(ta)}_{\A}=O(e^{{|t|}^\tau})$ as $|t|\to\infty$. This implies that, for some $C>0$, 
    $$
    \int_{\mathbb R}\frac{\log(\norm{u(ta)}_{\widetilde{\A}})}{1+t^2}\d t\leq C\int_{\mathbb R}\frac{|t|^\tau}{1+t^2}\d t<\infty.
    $$ 
    Hence, by a classical criterion of Shilov (see \cite[Example 2.4]{Ne92} for a proof written in English), $\widetilde{\A}(a)=\A(a)$ is regular.
\end{proof}

\subsection{Preservation of spectra}

Let $\Pi:\A\to \mathbb B(\mathcal X)$ be a representation of $\A$ on the Banach space $\mathcal X$. The idea of this subsection is to understand the spectrum of $\Pi(a)$, at least for a self-adjoint $a$. This is particularly important (and also easier) in the case of $^*$-representations, as it allows us to understand the $C^*$-norms in $\A$. We also provide applications to the ideal theory of $\A$. 

If $\A$ is a reduced Banach $^*$-algebra, then $\iota_\A:\A\to {\rm C^*}(\A)$ will denote its canonical embedding into the enveloping $C^*$-algebra ${\rm C^*}(\A)$. The spaces ${\rm Prim}\, {\rm C^*}(\A)$, ${\rm Prim}\,\A$ and ${\rm Prim}_*\A$ denote, respectively, the space of primitive ideals of ${\rm C^*}(\A)$, the space of primitive ideals of $\A$ and the space of kernels of topologically irreducible
$^*$-representations of $\A$, all of them equipped with the Jacobson topology. It is known that $\iota_\A$ induces a continuous surjection ${\rm Prim}\, {\rm C^*}(\A)  \to{\rm Prim}_*\A$ \cite[Corollary
10.5.7]{Pa01}. We recall that for a subset $S\subset \A$, its hull corresponds to 
$$
h(S)=\{I\in {\rm Prim}_*\A\mid S\subset I\},
$$ 
while the kernel of a subset $F\subset {\rm Prim}_*\A$ is 
$$
k(F)=\bigcap\{I\mid I\in F\}.
$$ 

\begin{defn}
    Let $\A$ be a reduced Banach $^*$-algebra. \begin{enumerate}
        \item[(i)] $\A$ is called $C^*$-unique if there is a unique $C^*$-norm on $\A$.
        \item[(ii)] $\A$ is called $^*$-regular if the surjection ${\rm Prim}\, {\rm C^*}(\A)\to {\rm Prim}_*\A$ is a homeomorphism.
    \end{enumerate}
\end{defn} 

It is well-known that $^*$-regularity implies $C^*$-uniqueness. In fact, the latter may be rephrased as the following: $\A$ is $C^*$-unique if and only if, for every closed ideal $I\subset {\rm C^*}(\A)$, one has $I\cap \A\not=\{0\}$. It might also be referred to as the 'ideal intersection property'. The next theorem shows that $^*$-regularity is much stronger.

\begin{thm}\label{regularity}
    Let $\A$ be a $(k,p,q)$-differential subalgebra of $\B$. Then $\A$ is $^*$-regular. In particular, the following are true. \begin{enumerate}
        \item[(i)] For any pair of \,$^*$-representations $\Pi_1, \Pi_2$ of $\A$ the inclusion ${\rm ker}\,\Pi_1\subset {\rm ker}\,\Pi_2$ implies $\norm{\Pi_2(a)}\leq \norm{\Pi_1(a)}$, for all $a\in \A$.
        \item[(ii)] Let $I$ be a closed ideal of $\B$, then $\overline{\A\cap I}^{\norm{\cdot}_{\B}}=I$.
        \item[(iii)] If $I\in {\rm Prim}\, \A$, then $\overline{I}^{\norm{\cdot}_{\B}}\in {\rm Prim}\, \B$. 
    \end{enumerate}
\end{thm}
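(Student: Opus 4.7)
The plan is to first establish $^*$-regularity of $\A$ and then deduce (i)--(iii) as standard consequences. For the main claim, I would combine the symmetry of $\A$ (Theorem \ref{symmetry}) with its local regularity (Theorem \ref{locallyreg}) and invoke a classical criterion (cf.\ \cite[Chapter 11]{Pa01}): a symmetric Banach $^*$-algebra in which $\A(a)$ is a regular commutative Banach algebra for every self-adjoint $a$ is $^*$-regular. The underlying mechanism is that the $\mathfrak D_\tau$-functional calculus from Theorem \ref{DixBai}, together with the regularity of each $\A(a)$, supplies enough elements in $\A$ to separate hulls of ideals in the same way continuous functions do in $C^*(\A)$; symmetry then forces the canonical continuous surjection ${\rm Prim}\,C^*(\A) \twoheadrightarrow {\rm Prim}_*\A$ to be injective and open, hence a homeomorphism.

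For (i), I would extend each $\Pi_i$ uniquely to a $^*$-representation $\widetilde\Pi_i$ of $C^*(\A)$. Under $^*$-regularity, the inclusion $\ker\Pi_1 \subset \ker\Pi_2$ lifts to $\ker\widetilde\Pi_1 \subset \ker\widetilde\Pi_2$, and the standard $C^*$-norm comparison on quotients yields $\|\widetilde\Pi_2(x)\| \leq \|\widetilde\Pi_1(x)\|$ for all $x \in C^*(\A)$, which restricts to the claimed inequality on $\A$. For (ii), after replacing $\B$ by $\overline{\A}^{\|\cdot\|_\B}$ I may assume $\A$ is $\|\cdot\|_\B$-dense in $\B$. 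The quotient $q:C^*(\A) \twoheadrightarrow \B$ pulls any closed ideal $I \subset \B$ back to a closed ideal $\widetilde I \subset C^*(\A)$; $^*$-regularity gives the density $\widetilde I = \overline{\widetilde I \cap \iota_\A(\A)}^{\|\cdot\|_{C^*(\A)}}$, and pushing forward under $q$ yields $I = \overline{\A \cap I}^{\|\cdot\|_\B}$. For (iii), symmetry forces every primitive ideal of $\A$ to be a $^*$-ideal (cf.\ \cite[Chapter 11]{Pa01}) and hence to lie in ${\rm Prim}_*\A$; $^*$-regularity then assigns to $I \in {\rm Prim}\,\A$ a primitive ideal of $C^*(\A)$, whose image under $q$ is the primitive ideal $\overline{I}^{\|\cdot\|_\B}$ of $\B$.

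The main obstacle is the first step: passing from symmetry plus local regularity to $^*$-regularity. Although classical, this requires a delicate spectral-theoretic argument—one must verify that any two distinct primitive ideals of $\A$ can be separated by an element of $\A$ in a spectrally controlled way, using Shilov regularity of each $\A(a)$ together with the reality of self-adjoint spectra from symmetry, and then upgrade this local separation to a full homeomorphism of primitive ideal spaces. The remaining deductions (i)--(iii) are then formal corollaries of the homeomorphism and standard $C^*$-algebra theory.
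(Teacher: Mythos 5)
Your proposal follows essentially the same route as the paper: the paper derives $^*$-regularity by combining the already-established symmetry (Theorem \ref{symmetry}) and local regularity (Theorem \ref{locallyreg}) with the classical Barnes--Palmer criterion (\cite[Theorem 4.2]{Ba81} together with \cite[Theorem 10.5.18]{Pa01}), and then obtains (i)--(iii) as standard consequences of $^*$-regularity via citations (\cite[Satz 2]{BLSV78}, \cite[Theorem 10.5.19]{Pa01}, and \cite{Fl24}). Your more detailed sketches of (i)--(iii) are consistent with those references, so the argument is sound and matches the paper's approach.
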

\begin{proof}
    $^*$-regularity follows from \cite[Theorem 4.2]{Ba81} and \cite[Theorem 10.5.18]{Pa01}. The rest of the assertions are consequences of $^*$-regularity. Indeed, \emph{(i)} follows from \cite[Satz 2]{BLSV78} and \emph{(ii)} from \cite[Theorem 10.5.19]{Pa01}. For \emph{(iii)}, see \cite{Fl24}.
\end{proof}

\begin{rem}
    If $\A$ is a $(k,p,q)$-differential subalgebra of $\B$, then, by the previous theorem, $\A$ must admit a unique $C^*$-norm. This means that 
$$
\|a\|_\B=\|a\|_{{\rm C^*}(\A)},\quad \text{ for all }a\in\A.
$$
In particular, the inclusion $\A\subset \B$ must lift to an isometric $^*$-monomorphism ${\rm C^*}(\A)\to\B$ and 
\begin{equation}\label{rigidity}
    \|{a^{2^k}}\|_{\A}\leq C\norm{a}_{\A}^p\norm{a}_{{\rm C^*}(\A)}^q,\quad \textup{ for all } a\in\A.
\end{equation}
This makes the concept of $(k,p,q)$-differential subalgebra actually very intrinsic to $\A$. For that reason, in the future we might refer to $\A$ as a $(k,p,q)$-differential subalgebra, without any explicit mention to $\B$.
\end{rem}

We finish the subsection with a result that shows that any unital homomorphism preserves the spectral radius. For $C^*$-algebras, this is a classical result of Rickart \cite{Ri53}.
\begin{thm}
    Let $\A$ be a $(k,p,q)$-differential subalgebra. Let $\B$ be a unital Banach algebra and $\varphi:\widetilde{\A}\to\B$ a continuous unital homomorphism. Set $I={\rm ker}\,\varphi$. \begin{enumerate}
        \item[(i)] If $a \in \widetilde{\A}$ is normal, $${\rm Spec}_{\widetilde{\A}/I}(a+I)={\rm Spec}_{\B}\big(\varphi(a)\big).$$ 
        \item[(ii)] For a general $a \in \widetilde{\A}$, $${\rm Spec}_{\widetilde{\A}/I}(a+I)={\rm Spec}_{\B}\big(\varphi(a)\big)\cup \overline{{\rm Spec}_{\B}\big(\varphi(a^*)\big)}.$$
    \end{enumerate}
\end{thm}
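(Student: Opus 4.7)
The argument is in the spirit of Rickart's theorem for $C^*$-algebras: the induced continuous unital injection $\widetilde\varphi:\widetilde{\A}/I\to\B$ provides the automatic inclusion ${\rm Spec}_\B(\varphi(a))\subseteq{\rm Spec}_{\widetilde{\A}/I}(a+I)$ in both parts, and all the work is in the reverse direction.

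The engine is the self-adjoint case of (i). Fix $a=a^*\in\widetilde{\A}$ and $\lambda\notin{\rm Spec}_\B(\varphi(a))$; we must invert $a-\lambda+I$ in $\widetilde{\A}/I$. Symmetry (Theorem \ref{symmetry}) gives ${\rm Spec}_{\widetilde{\A}}(a)\subset\R$, hence ${\rm Spec}_\B(\varphi(a))\subset\R$, so the case $\lambda\notin\R$ is settled inside $\widetilde{\A}$ itself. For $\lambda\in\R$, the regularity and normality of $\mathfrak D_\tau$ (Remark \ref{normal}) furnish $\psi\in\mathfrak D_\tau$ with $\psi\equiv 1$ on a neighborhood of ${\rm Spec}_\B(\varphi(a))$ and $\psi\equiv 0$ on a neighborhood of $\lambda$; then $f(x):=\psi(x)/(x-\lambda)$ still lies in $\mathfrak D_\tau$, and Theorem \ref{DixBai} gives $f(a)(a-\lambda)=\psi(a)$ in $\widetilde{\A}(a,1)$. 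It suffices to check $\psi(a)-1\in I$, i.e.\ $\varphi(\psi(a))=1$ in $\B$. Since $\|e^{it\varphi(a)}\|_\B=O(e^{|t|^\tau})$ by Proposition \ref{growth} applied through $\varphi$, one has
\[
\varphi(\psi(a))=\frac{1}{2\pi}\int_\R \widehat\psi(t)\,e^{it\varphi(a)}\d t
\]
as a Bochner integral in $\B$; interchanging it with the resolvent contour representation $e^{itz}=(2\pi i)^{-1}\oint_\gamma e^{itw}(w-\varphi(a))^{-1}\d w$ identifies the result with the holomorphic functional calculus of $\psi$ at $\varphi(a)$, which equals $1$ because $\psi\equiv 1$ near ${\rm Spec}_\B(\varphi(a))$. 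Thus $f(a)+I$ is a two-sided inverse of $a-\lambda+I$.

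For the normal case of (i), the commutation of $a-\lambda$ and $a^*-\overline\lambda$ in $\widetilde{\A}$ produces the self-adjoint product $|a-\lambda|^2=(a^*-\overline\lambda)(a-\lambda)$. The elementary fact that, for commuting $u,v$ in a unital ring, $uv$ is invertible iff both $u,v$ are (with explicit two-sided inverses $(uv)^{-1}v$ and $(uv)^{-1}u$), combined with Step 1 applied to $|a-\lambda|^2$, lets one translate invertibility questions freely between $(a-\lambda)+I$, $|a-\lambda|^2+I$, $\varphi(|a-\lambda|^2)$, and the pair $\{\varphi(a-\lambda),\varphi(a^*-\overline\lambda)\}$. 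A bivariate analogue of the $\mathfrak D_\tau$-calculus applied to the commuting self-adjoint pair $(\operatorname{Re}a,\operatorname{Im}a)$ then reruns the Step 1 argument directly in $\C$ rather than in $\R$, separating $\lambda$ from ${\rm Spec}_\B(\varphi(a))$ by a two-variable bump and eliminating the spurious contribution of $\varphi(a^*)$; this yields ${\rm Spec}_{\widetilde{\A}/I}(a+I)={\rm Spec}_\B(\varphi(a))$.

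For (ii), where $a$ need not be normal, I would first establish the inclusion ${\rm Spec}_{\widetilde{\A}/I}(a+I)\subseteq{\rm Spec}_\B(\varphi(a))\cup\overline{{\rm Spec}_\B(\varphi(a^*))}$: if $\varphi(a-\lambda)$ and $\varphi(a^*-\overline\lambda)$ are both invertible in $\B$, then so are the two products $\varphi((a-\lambda)(a^*-\overline\lambda))$ and $\varphi((a^*-\overline\lambda)(a-\lambda))$, so by Step 1 applied to the self-adjoint elements $(a-\lambda)(a-\lambda)^*$ and $(a-\lambda)^*(a-\lambda)$, both of these are invertible modulo $I$, and their inverses $z_1,z_2$ yield the explicit right inverse $(a^*-\overline\lambda)z_1+I$ and left inverse $z_2(a^*-\overline\lambda)+I$ of $(a-\lambda)+I$. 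The remaining inclusion $\overline{{\rm Spec}_\B(\varphi(a^*))}\subseteq{\rm Spec}_{\widetilde{\A}/I}(a+I)$ is the subtlest step, since $\widetilde{\A}/I$ is not a $^*$-algebra; my plan is to apply Step 1 to the anti-homomorphism $a\mapsto\varphi(a^*)$ into $\B^{\mathrm{opp}}$, whose kernel is the closed ideal $I^*$, to obtain a Rickart-type conclusion inside $\widetilde{\A}/I^*$, and then transfer it back to $\widetilde{\A}/I$ using the $^*$-regularity of $\widetilde{\A}$ (Theorem \ref{regularity}) via the closed $^*$-ideal $J:=I\cap I^*$. The principal obstacle is exactly this transfer between $I$ and $I^*$; it is where the combined rigidity provided by the $(k,p,q)$-differential condition and by $^*$-regularity is indispensable.
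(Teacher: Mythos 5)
Your strategy (reduce everything to the self-adjoint case via the $\mathfrak D_\tau$-calculus, then bootstrap to normal and general elements) diverges completely from the paper, which disposes of the theorem in one line: it cites Barnes's abstract result \cite[Theorem 2.2]{Ba87}, with symmetry supplied by Theorem \ref{symmetry} and the required $^*$-quotient inverse closedness supplied by local regularity via \cite[Theorem 2.1]{Ba87}. Measured against a complete proof, your write-up has three genuine gaps. First, in Step 1 the identification of $\varphi(\psi(a))$ with ``the holomorphic functional calculus of $\psi$ at $\varphi(a)$'' by interchanging the Bochner integral with a resolvent contour does not work: since $a=a^*$ and $\A$ is symmetric, ${\rm Spec}_{\B}(\varphi(a))$ is real, so any admissible contour $\gamma$ must leave the real axis, and then $|e^{itw}|=e^{-t\,{\rm Im}\,w}$ grows exponentially in $|t|$ while $\widehat\psi$ is only guaranteed to decay like $e^{-|t|^{\tau}}$ with $\tau<1$; the double integral is not absolutely convergent and Fubini fails. (A repair exists --- run the Domar/Shilov argument inside the commutative Banach algebra $\B\big(\varphi(a),1\big)$, noting that a compact subset of $\R$ has connected complement, so the subalgebra spectrum of $\varphi(a)$ coincides with ${\rm Spec}_{\B}(\varphi(a))$ --- but that is a different argument from the one you wrote.)

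Second, the normal case of (i) rests on ``a bivariate analogue of the $\mathfrak D_\tau$-calculus,'' which is nowhere constructed in the paper and is not a routine extension: one would need joint subexponential growth of $e^{i(s\,{\rm Re}\,a+t\,{\rm Im}\,a)}$, a two-variable Domar algebra together with its regularity, and a two-variable spectral mapping theorem. Without it, your commuting-product trick only yields ${\rm Spec}_{\widetilde{\A}/I}(a+I)={\rm Spec}_{\B}\big(\varphi(a)\big)\cup\overline{{\rm Spec}_{\B}\big(\varphi(a^*)\big)}$ for normal $a$, i.e.\ the formula of (ii), not the stronger equality claimed in (i). Third, in (ii) the inclusion $\overline{{\rm Spec}_{\B}\big(\varphi(a^*)\big)}\subseteq{\rm Spec}_{\widetilde{\A}/I}(a+I)$ is left as a ``plan'' with a self-acknowledged ``principal obstacle'' (the passage between $I$ and $I^*$); as written, this half of the equality is simply not proved. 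If you want a self-contained argument rather than the citation, the honest route is to reproduce Barnes's proof, whose essential input is precisely the quotient inverse closedness that local regularity (Theorem \ref{locallyreg}) provides.
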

\begin{proof}
    This is an application of \cite[Theorem 2.2]{Ba87} and Theorem \ref{symmetry}. $\A$ is $^*$-quotient inverse closed because of local regularity (see \cite[Theorem 2.1]{Ba87}). 
\end{proof}

\subsection{Minimal ideals of a given hull}

Now we turn our attention to the following problem: Let $\A$ be a $(k,p,q)$-differential subalgebra. Show the existence of minimal ideals of $\A$ with a given hull in ${\rm Prim }_*\A$. 

The main application of such a property will be to show the Wiener property, but we postpone that until the next subsection. In order to prove the existence of minimal ideals of a given hull, we follow the approach given in \cite{Lu80}. 

Let $\A$ be a $(k,p,q)$-differential subalgebra. If $F\subset {\rm Prim }_*\A$ is any closed subset, we set 
\begin{equation*}
    \norm{a}_{F}=\sup _{{\rm ker}\,\Pi \in F}\norm{\Pi(a)},\quad \text{ for all }a\in\A,
\end{equation*} 
with the convention that $\norm{a}_\emptyset=0$. Set also
\begin{align*}
    m( F)=&\{f(a) \mid a \in \A_{\rm sa},\norm{a}_{\A} \leq 1, f\in \mathfrak D_\tau, f\equiv 0 \text{ on a neighborhood of }[-\norm{a}_{F},\norm{a}_{F}]\}.
\end{align*}
We let $j(F)$ be the closed two-sided ideal of $\A$ generated by $m(F)$. Note that for $F=\emptyset$, we have
\begin{align*}
m(\emptyset)=&\{f(a) \mid a \in\A_{\rm sa},\norm{a}_{\A} \leq 1, f\in \mathfrak D_\tau, f\equiv 0 \text{ on a neighborhood of }0\}.
\end{align*}

\begin{lem}
    Let $\A$ be $(k,p,q)$-differential subalgebra and $F\subset{\rm Prim }_*\A $ be closed. Then the hull of $j(F)$ is $F$.
\end{lem}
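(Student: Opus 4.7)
The goal is to verify the two inclusions $F \subset h(j(F))$ and $h(j(F)) \subset F$. The first is a direct application of the functional calculus from Theorem \ref{DixBai}, while the second leverages the regularity of $\mathfrak D_\tau$ together with the fact that closed subsets of ${\rm Prim}_*\A$ are characterized as hulls of their kernels.

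For $F \subset h(j(F))$, fix $I = \ker\Pi \in F$ with $\Pi$ a topologically irreducible $^*$-representation, and take any generator $f(a)$ of $m(F)$, with $a \in \A_{\rm sa}$, $\|a\|_\A \leq 1$, and $f \in \mathfrak D_\tau$ vanishing on a neighborhood of $[-\|a\|_F, \|a\|_F]$. Because $I \in F$, we have $\|\Pi(a)\| \leq \|a\|_F$, so ${\rm Spec}_{\BofH}(\Pi(a)) \subset [-\|a\|_F,\|a\|_F]$ (self-adjoint element in a $C^*$-algebra). By Theorem \ref{DixBai}(ii), $\Pi(f(a)) = f(\Pi(a))$, and the continuous functional calculus in $\BofH$ yields $f(\Pi(a)) = 0$ since $f$ vanishes on a neighborhood of the spectrum. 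Thus $f(a) \in \ker\Pi = I$, so $m(F) \subset I$, and since $I$ is a closed two-sided ideal we get $j(F) \subset I$, i.e.\ $I \in h(j(F))$.

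For the reverse inclusion, suppose $I = \ker\Pi \in h(j(F)) \setminus F$. Since $F$ is closed in the Jacobson topology, $F = h(k(F))$, so $I \notin h(k(F))$ means there exists $b \in k(F)$ with $b \notin I$, i.e.\ $\Pi(b) \neq 0$. Replacing $b$ by $b^*b \in k(F)$ (since every element of $F$ is a $^*$-ideal, hence so is $k(F)$), we obtain a self-adjoint element $a \in k(F)_{\rm sa}$ with $\Pi(a) = \Pi(b)^*\Pi(b) \neq 0$; after rescaling, we assume $\|a\|_\A \leq 1$. Since $\Pi'(a) = 0$ for every $\Pi'$ with $\ker\Pi' \in F$, we have $\|a\|_F = 0$, so $[-\|a\|_F,\|a\|_F] = \{0\}$. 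The self-adjoint operator $\Pi(a)$ is nonzero, hence its spectrum contains some $\lambda \neq 0$. By Domar's theorem $\mathfrak D_\tau$ is a regular commutative Banach algebra with spectrum $\R$, so by Remark \ref{normal} there exists $f \in \mathfrak D_\tau$ with $f \equiv 0$ on a neighborhood of $\{0\}$ and $f \equiv 1$ on a neighborhood of the compact set $\{\lambda\}$. Then $f(a) \in m(F)$, and by Theorem \ref{DixBai}(ii) together with the $C^*$-spectral mapping theorem, $\Pi(f(a)) = f(\Pi(a))$ contains $1$ in its spectrum and is therefore nonzero. This contradicts $f(a) \in j(F) \subset I = \ker\Pi$.

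The main technical ingredient, rather than an obstacle, is the interplay between the functional calculus of Theorem \ref{DixBai}, which ensures $\Pi(f(a)) = f(\Pi(a))$ with the right-hand side computed in a $C^*$-algebra, and the normality property of $\mathfrak D_\tau$ inherited from its regularity. Once these are in place, the argument is a clean duplication of the classical $C^*$-algebra proof that minimal ideals with a prescribed hull exist, adapted here to the smaller algebra $\A$.
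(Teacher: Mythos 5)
Your proof is correct. The first inclusion $F\subset h(j(F))$ is handled exactly as in the paper: Theorem \ref{DixBai}\emph{(ii)} reduces everything to the continuous functional calculus of the self-adjoint operator $\Pi(a)$, whose spectrum sits inside $[-\norm{a}_F,\norm{a}_F]$ where $f$ vanishes. For the reverse inclusion you take a genuinely different route. The paper invokes Theorem \ref{regularity} ($^*$-regularity) to produce, for $\ker\Pi\notin F$, an element $a$ with $\norm{a}_F<\norm{\Pi(a)}$, and then separates the interval $[-\norm{a}_F,\norm{a}_F]$ from the point $\norm{\Pi(a)}$ of the spectrum. You instead use only the definition of the Jacobson topology: $F$ closed means $F=h(k(F))$, so $\ker\Pi\notin F$ yields $b\in k(F)\setminus\ker\Pi$, and passing to $a=b^*b$ (rescaled) gives a self-adjoint element with $\norm{a}_F=0$ but $\Pi(a)\neq 0$; the separating function then only needs to vanish near $\{0\}$ and be nonzero at some nonzero spectral point of $\Pi(a)$. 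Your version is more elementary in that it does not call on the $^*$-regularity theorem at all, only on the hull-kernel formalism, the functional calculus, and the normality of $\mathfrak D_\tau$ (Remark \ref{normal}); the paper's version produces the separating element in one stroke but at the cost of citing a heavier result. Both arguments are valid, and the degenerate interval $[-\norm{a}_F,\norm{a}_F]=\{0\}$ in your setup is perfectly compatible with the definition of $m(F)$.
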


\begin{proof}
    We first prove that $F \subset h(j(F))$. Indeed, let $\Pi$ be a $^*$-representation with ${\rm ker}\,\Pi \in F$ and $f(a) \in m(F)$. Then $\Pi\big(f(a)\big)=f\big(\Pi(a)\big)=0$, since $f\equiv 0$ on the spectrum of $\Pi(a)$. Hence $m(F)\subset j(F) \subset {\rm ker}\, \Pi$ and therefore ${\rm ker}\, \Pi \in h(j(F))$ and $F \subset h(j(F))$.

    On the other hand, let $\Pi$ be a $^*$-representation with ${\rm ker}\,\Pi\in {\rm Prim }_*\A\setminus F$, then, because of Theorem \ref{regularity} there exists $a \in \A$ such that $\norm{a}_{F}<\norm{\Pi(a)}$. In fact, $a$ can be chosen to be self-adjoint and have $\norm{a}_{\A} \leq 1$. In particular, and because of Remark \ref{normal}, we can find a function $f\in \mathfrak D_\tau$ such that $f\equiv 0$ on a neighborhood of $[-\norm{a}_{F},\norm{a}_{F}]$ and $f(\norm{\Pi(a)})\not= 0$. Since $a$ is self-adjoint, $\norm{\Pi(a)}$ lies in the spectrum of $\Pi(a)$ and 
    $$
    \Pi\big(f(a)\big)=f\big(\Pi(a)\big)\not=0,
    $$ 
    from which it follows that ${\rm ker}\,\Pi\not \in h(j({F}))$.
\end{proof}

\begin{thm}\label{minideals}
    Let $\A$ be $(k,p,q)$-differential subalgebra and $F\subset{\rm Prim }_*\A $ be closed. There exists a closed two-sided ideal $j({F})$ of $\A$, with $h(j({F}))={F}$, which is contained in every two-sided closed ideal I with $h(I) \subset {F}$.
\end{thm}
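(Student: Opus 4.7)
The preceding lemma already yields $h(j(F))=F$, so only the minimality assertion requires attention: for every closed two-sided ideal $I$ with $h(I) \subset F$, I must show $j(F) \subset I$. My first step is to note that $j(F)$ is actually a $^*$-ideal, because $m(F)$ is stable under involution --- indeed $f(a)^* = \bar{f}(a)$ and $\bar{f}$ vanishes on the same neighborhood as $f$. Since kernels of $^*$-representations are $^*$-ideals, $h(I) = h(I^*)$, so replacing $I$ by $\overline{I + I^*}$ leaves the hull unchanged, and I may therefore assume $I$ is itself a closed $^*$-ideal. It then suffices to show $m(F) \subset I$, so I fix $f(a) \in m(F)$ with $a = a^*$, $\norm{a}_\A \leq 1$, and $f$ vanishing on a neighborhood of $[-\norm{a}_F,\norm{a}_F]$.

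The main idea is to rerun the Dixmier-Baillet construction of Theorem \ref{DixBai} inside the quotient Banach $^*$-algebra $\A/I$. Writing $q:\A\to\A/I$ for the quotient $^*$-homomorphism, contractivity of $q$ together with Proposition \ref{growth} gives
$$
\norm{e^{itq(a)}}_{\widetilde{\A/I}} \leq \norm{e^{ita}}_{\widetilde{\A}} = O(e^{|t|^\tau}),
$$
so the Bochner integral $f(q(a)) := \tfrac{1}{2\pi}\int_{\R} \widehat{f}(t)\,e^{itq(a)}\,\d t$ converges in $\widetilde{\A/I}$ and, by continuity of $q$, equals $q(f(a))$. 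For any topologically irreducible $^*$-representation $\pi$ of $\A/I$, the composition $\Pi:=\pi\circ q$ is a topologically irreducible $^*$-representation of $\A$ whose kernel contains $I$, hence lies in $h(I)\subset F$. Therefore $\norm{\Pi(a)}\leq \norm{a}_F$, the spectrum of the self-adjoint operator $\Pi(a)$ sits inside $[-\norm{a}_F,\norm{a}_F]$, and $f$ vanishes on a neighborhood of it. Continuous functional calculus in $\BofH$ gives $f(\Pi(a))=0$, and the standard interchange of Bochner integral with a continuous representation (exactly as in the proof of Theorem \ref{DixBai}(ii)) then produces
$$
\pi(q(f(a))) = f(\pi(q(a))) = f(\Pi(a)) = 0.
$$

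The final and hardest step is to conclude that $q(f(a))=0$ from the fact that every topologically irreducible $^*$-representation of $\A/I$ annihilates it; equivalently, I need the $^*$-radical of $\A/I$ to be trivial, i.e.\ $I = k(h(I))$. My plan is to extract this from $^*$-regularity (Theorem \ref{regularity}): the homeomorphism ${\rm Prim}\,{\rm C^*}(\A) \cong {\rm Prim}_*\A$, together with the density statement \ref{regularity}(ii) which recovers any closed ideal of ${\rm C^*}(\A)$ from its intersection with $\A$, allows one to transfer the $C^*$-algebraic fact that closed two-sided ideals are intersections of the primitive ideals containing them back down to closed $^*$-ideals of $\A$. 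Once this is in place, $f(a) \in I$ holds for every $f(a) \in m(F)$, giving $j(F) \subset I$ as required.
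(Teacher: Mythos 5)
There is a genuine gap, and it sits exactly at the step you yourself flag as the hardest. Your argument shows that every topologically irreducible $^*$-representation $\Pi$ of $\A$ with $\ker\Pi\supseteq I$ annihilates $f(a)$; that is, it shows $f(a)\in k(h(I))$, hence only $j(F)\subseteq k(h(I))$. But since $h(I)\subseteq F$ gives $k(h(I))\supseteq k(F)\supseteq j(F)$ automatically (the preceding lemma already yields $j(F)\subseteq k(F)$), this reproves something trivial and never reaches the actual assertion $j(F)\subseteq I$. The missing inclusion $k(h(I))\subseteq I$ is precisely the statement that $h(I)$ is a set of synthesis for the ideal $I$, and your claim that $^*$-regularity supplies it is false: if $I=k(h(I))$ held for every closed $^*$-ideal, then every closed subset of ${\rm Prim}_*\A$ would be a set of synthesis, the theorem would collapse to the tautology $j(F)=k(F)$, and Theorem \ref{synth} together with hypothesis \emph{(iii)} of Theorem \ref{theprop} would be vacuous. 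Theorem \ref{regularity}\emph{(ii)} recovers closed ideals of the $C^*$-algebra $\B$ from their traces on $\A$; it says nothing about recovering a closed ideal of $\A$, in the $\A$-norm, from its hull --- $I$ and $k(h(I))$ can have the same closure in ${\rm C^*}(\A)$ while differing in $\A$. A secondary defect: your reduction to $^*$-ideals replaces $I$ by the \emph{larger} ideal $\overline{I+I^*}$, and proving $j(F)\subseteq\overline{I+I^*}$ does not give $j(F)\subseteq I$; the reduction would have to go to $I\cap I^*$ (though in fact no reduction is needed, since $h(I)$ is defined via $^*$-representations regardless of whether $I$ is $^*$-closed).

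The paper avoids this trap entirely. Its proof does not pass through representations of $\A/I$ at all: it uses Remark \ref{normal} (normality of the regular algebra $\mathfrak D_\tau$) to produce, for each $f(a)\in m(F)$, a second element $g(a)\in m(F)$ with the factorization $f(a)=f(a)g(a)$, and then invokes Ludwig's \cite[Lemma 2]{Lu80}. That lemma is exactly the device that converts such an idempotent-type factorization inside $m(F)$ into the minimality statement $j(F)\subseteq I$; it is the ingredient your proposal is missing, and it cannot be replaced by an appeal to $^*$-semisimplicity of the quotient.
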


\begin{proof}
    Take a non-zero $f(a) \in m({F})$ and, invoking Remark \ref{normal}, choose $g\in \mathfrak D_\tau$ such that $g\equiv 0$ in $[-\norm{a}_{F},\norm{a}_{F}]$ and $g\equiv 1$ in $f({\rm Spec}_{\A}(a))\cap {\rm Supp}(f)$ (which is compact). Thus 
    $$
    f(a)=(f\cdot g)(a)=f(a)g(a)
    $$ 
    and $g(a)\in m({F})$. This implies that $\A$ satisfies the conditions of \cite[Lemma 2]{Lu80} and the conclusion follows.
\end{proof}

\subsection{The Wiener property}\label{wiener}

We now consider the following property, which is intended as an abstract generalization of Wiener's Tauberian theorem.

\begin{defn}
    Let $\A$ be a Banach $^*$-algebra. We say that $\A$ has the Wiener property $(W)$ if for every proper closed two-sided ideal $I\subset\A$, there exists a topologically irreducible $^*$-representation $\Pi:\A\to\BofH$, such that $I \subset {\rm ker}\,\Pi$. 
\end{defn}

In a moment, we will assume the existence of bounded approximate identities. The next remark is extremely useful if one wants to minimize assumptions.
\begin{rem}\label{onesidedunit}
    Let $\A$ be a Banach $^*$-algebra with a continuous involution. If $\A$ contains a bounded left approximate identity, then it contains a self-adjoint (two-sided) bounded approximate identity. 
    
    Indeed, if $(a_i)_{i\in I}$ is a bounded left approximate identity, then $(a^*_i+a_j-a^*_ia_j)_{i,j\in I^2}$ is a two-sided bounded approximate identity. If $(b_\lambda)_{\lambda\in \Lambda}$ is a two-sided bounded approximate identity, then $(b^*_\lambda b_\lambda)_{\lambda\in \Lambda}$ is a self-adjoint bounded approximate identity. 
\end{rem}

\begin{lem}\label{appidentity}
    Let $\A$ be a $(k,p,q)$-differential subalgebra and let $f\in \mathfrak D_\tau$ such that $f(0)=0$ and $f(1)=1$. Suppose that $(b_{\alpha})_{\alpha}\subset\A$ is a self-adjoint bounded approximate identity. Then 
    $$
    \lim_{\alpha} \,\norm{f(b_{\alpha})a-a}_{\A}=0,
    $$
    for all $a\in \A$.
\end{lem}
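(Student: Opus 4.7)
The plan is to realize both $f(b_\alpha)a$ and $a$ as Bochner integrals arising from the smooth calculus of Theorem~\ref{DixBai} and reduce the claim to a pair of dominated-convergence arguments. Set $M:=\sup_\alpha \|b_\alpha\|_\A<\infty$. By Theorem~\ref{DixBai}(i), $f(b_\alpha)=\frac{1}{2\pi}\int_{\R}\widehat f(t)\,e^{itb_\alpha}\,\d t$ as a Bochner integral in $\widetilde\A(b_\alpha,1)$, and right-multiplication by $a$ gives $f(b_\alpha)a=\frac{1}{2\pi}\int_\R \widehat f(t)\, e^{itb_\alpha}\,a\,\d t$. On the other hand, Fourier inversion yields the scalar identity $f(1)=\frac{1}{2\pi}\int_\R \widehat f(t)\,e^{it}\,\d t=1$, so $a=\frac{1}{2\pi}\int_\R \widehat f(t)\,e^{it}\,a\,\d t$. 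Since $1_{\widetilde\A}$ commutes with $b_\alpha$, one may factor $e^{itb_\alpha}=e^{it}\,e^{it(b_\alpha-1)}$, and subtracting the two integrals gives
\[
f(b_\alpha)\,a - a \;=\; \frac{1}{2\pi}\int_\R \widehat f(t)\, e^{it}\bigl(e^{it(b_\alpha-1)} - 1\bigr)\,a\,\d t.
\]

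Next, for each fixed $t\in\R$, I would show $\bigl(e^{it(b_\alpha-1)}-1\bigr)a\to 0$ in $\A$. Expanding the exponential as an absolutely convergent power series in $\widetilde\A$ and multiplying by $a$ on the right,
\[
\bigl(e^{it(b_\alpha-1)}-1\bigr)\,a \;=\; \sum_{n=1}^{\infty} \frac{(it)^n}{n!}\,(b_\alpha-1)^n\,a,
\]
and one has the uniform estimate $\|(b_\alpha-1)^n a\|_\A\le(M+1)^n\|a\|_\A$. A one-line induction shows $b_\alpha^{\,k} a\to a$ in $\A$ for every $k\ge 0$ (the base case $k=1$ is the approximate-identity assumption, and the inductive step uses $\|b_\alpha(b_\alpha^{\,k}a-a)\|_\A\le M\|b_\alpha^{\,k}a-a\|_\A$ together with $b_\alpha a\to a$). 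The binomial expansion $(b_\alpha-1)^n a=\sum_{k=0}^n\binom{n}{k}(-1)^{n-k} b_\alpha^{\,k}a$ then yields $(b_\alpha-1)^n a\to (1-1)^n a=0$ in $\A$ for every $n\ge 1$. Dominated convergence in the series (with summable dominator $|t|^n(M+1)^n\|a\|_\A/n!$) establishes the pointwise-in-$t$ claim.

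It then remains to pass this pointwise convergence through the outer Bochner integral. Bound the integrand by
\[
\bigl\|\widehat f(t)\,e^{it}\bigl(e^{it(b_\alpha-1)}-1\bigr)a\bigr\|_\A \;\le\; |\widehat f(t)|\bigl(1+\|e^{itb_\alpha}\|_{\widetilde\A}\bigr)\|a\|_\A.
\]
The key point — and where I expect the main subtlety lies — is obtaining an exponential bound $\|e^{itb_\alpha}\|_{\widetilde\A}=O(e^{|t|^\tau})$ that is uniform in the index $\alpha$. This is precisely the content of Remark~\ref{joingrowth}: since $\sup_\alpha\|b_\alpha\|_\A\le M$, there is a constant $B>0$ (independent of $\alpha$) with $\|e^{itb_\alpha}\|_{\widetilde\A}\le 1+Be^{|t|^\tau}$ for all $t\in\R$. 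Thus the integrand is dominated by the integrable function $|\widehat f(t)|(2+Be^{|t|^\tau})\|a\|_\A$ (integrability being the defining condition of $\mathfrak D_\tau$), and the dominated convergence theorem yields $\|f(b_\alpha)a - a\|_\A \to 0$, as required.
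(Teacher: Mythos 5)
Your integral decomposition is exactly the paper's: since $e^{it}\bigl(e^{it(b_\alpha-1)}-1\bigr)=e^{itb_\alpha}-e^{it}$, your representation of $f(b_\alpha)a-a$ coincides with the one in the paper's proof, and you correctly isolate the two essential ingredients (the $\alpha$-uniform growth bound of Remark~\ref{joingrowth}, and $b_\alpha^{\,k}a\to a$). The gap is in the last step. A bounded approximate identity is in general a \emph{net}, and the dominated convergence theorem is false for nets: pointwise convergence $g_\alpha(t)\to 0$ for every $t$ together with a fixed integrable dominator does not imply $\int g_\alpha\to 0$ (index by the finite subsets $F\subset\R$ ordered by inclusion and take $g_F=h\,(1-\chi_F)$ for a positive integrable $h$; each $g_F(t)\to 0$, yet $\int g_F=\int h$ for all $F$). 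So the sentence ``the dominated convergence theorem yields $\|f(b_\alpha)a-a\|_\A\to 0$'' is not a valid inference. (Your inner ``dominated convergence in the series'' is harmless, because after truncating at $N$ only finitely many terms remain and a finite sum of net-limits converges; the integral over $t$ does not reduce to finitely many terms in this way.)

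The repair is the explicit $\epsilon/2$ splitting that the paper performs, and your own estimates already supply what is needed. Choose $R$ so that $\int_{|t|>R}|\widehat f(t)|\bigl(2+Be^{|t|^\tau}\bigr)\|a\|_\A\,\d t<\epsilon/2$, which is possible uniformly in $\alpha$ by integrability of your dominator. On $|t|\le R$ you must upgrade pointwise convergence to a bound that is uniform in $t$: writing $(b_\alpha-1)^n a=\sum_{k=0}^n\binom{n}{k}(-1)^{n-k}\bigl(b_\alpha^{\,k}a-a\bigr)$ and using the telescoping estimate $\|b_\alpha^{\,k}a-a\|_\A\le kM^{k-1}\|b_\alpha a-a\|_\A$ (with $M\ge 1$ without loss of generality) gives $\bigl\|\bigl(e^{it(b_\alpha-1)}-1\bigr)a\bigr\|_\A\le |t|\,e^{|t|(1+M)}\,\|b_\alpha a-a\|_\A$, so the integral over $|t|\le R$ is at most $\|\widehat f\|_{L^1(\R)}\,R\,e^{R(1+M)}\|b_\alpha a-a\|_\A\to 0$. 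With this modification your argument closes and is essentially identical to the paper's proof of Lemma~\ref{appidentity}.
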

\begin{proof}
    Let $B\geq\sup \norm{b_\alpha}_\A $ and without loss of generality, we assume that $B\geq1$. Since $0=f(0)=\int_{\mathbb R}\widehat{f}(t)\d t$ and $1=f(1)=\frac{1}{\sqrt{2\pi}}\int_{\mathbb R}\widehat{f}(t)e^{it}\d t$, one sees that 
    \begin{align*}
    \norm{f(b_{\alpha})a-a}_{\A}&= \frac{1}{\sqrt{2\pi}}\Big\|\int_{\mathbb R}\widehat{f}(t)\big(u(tb_{\alpha})-e^{it}\big)a\, \d t\Big\|_{\A}  \\
    &= \frac{1}{\sqrt{2\pi}}\Big\|{\int_{\mathbb R}\widehat{f}(t)\big(e^{itb_{\alpha}}-e^{it}\big)a\, \d t\Big\|}_{\A}.
\end{align*} 
Let $R>0$ be fixed and let us argue that
\begin{equation}\label{goal2}
    \lim_{\alpha}\Big\|\int_{|t|\leq R}\widehat{f}(t)\big(e^{itb_{\alpha}}-e^{it}\big)a\, \d t\Big\|_{\A} =0.
\end{equation}
Indeed, the desired convergence follows from the computation 
\begin{align*}
    \Big\|\int_{|t|\leq R}\widehat{f}(t)\big(e^{itb_{\alpha}}-e^{it}\big)a\, \d t\Big\|_{\A}&\leq  \|\widehat{f}\|_{C_0(\R)}\int_{|t|\leq R}\big\|\big(e^{itb_{\alpha}}-e^{it}\big)a\big\|_{\A}\d t \\
    &\leq \|\widehat{f}\|_{C_0(\R)}\int_{|t|\leq R}\sum_{k\in\mathbb N} \frac{t^k}{k!}\big\| b_{\alpha}^k a-a\big\|_{\A}\d t \\
    &\leq 2R \|\widehat{f}\|_{C_0(\R)} \sum_{k\in\mathbb N} \frac{R^k}{k!}\|b_{\alpha}^ka-a\|_{\A} \\
    &\leq 2 R \|\widehat{f}\|_{C_0(\R)} \|b_{\alpha}a-a\|_{\A}\sum_{k\in\mathbb N}\frac{R^k}{k!}\sum_{j=0}^{k-1} B^j \\
    &\leq 2 BR^2e^{BR} \|\widehat{f}\|_{C_0(\R)}\|b_{\alpha}a-a\|_{\A}.
\end{align*}
We now turn our attention to the rest of the integral. We see that
\begin{align*}
\Big\|\int_{|t|>R}\widehat{f}(t)\big(e^{itb_{\alpha}}&-e^{it}\big)a\, \d t\Big\|_{\A}\leq  \int_{|t|>R}|\widehat{f}(t)|\|e^{itb_{\alpha}}a \|_{\A}\d t+\|a\|_{\A}\Big|\int_{|t|>R}\widehat{f}(t)e^{it}\, \d t\Big|.
\end{align*}
So it is enough to find $R>0$ such that, given any $\alpha$ and any $\epsilon>0$, one has
\begin{equation}\label{goal}
    \int_{|t|>R}|\widehat{f}(t)|\|e^{itb_{\alpha}}a\|_{\A}\d t<\frac{\epsilon}{2}.
\end{equation}
That is because, in such a case, it is easy to enlarge $R$ to also guarantee
$$
\Big|\int_{|t|>R}\widehat{f}(t)e^{it}\, \d t\Big|<\frac{\epsilon}{2\|a\|_{\A}}
$$
and that, combined with \eqref{goal2}, will yield the result.

Indeed, because of Remark \ref{joingrowth}, one can find a constant $A>0$ such that, independently of $\alpha$, 
$$
\norm{u(tb_\alpha)}_{\A}\leq Ae^{|t|^{\tau}}, \quad\text{ for all }\alpha.
$$
We use this bound to see that
\begin{align*}
    \int_{\R}|\widehat{f}(t)|\|e^{itb_{\alpha}}a \|_{\A}\d t &\leq \|a\|_\A\int_{\R}|\widehat{f}(t)|(1+\|u(tb_{\alpha})\|_{\A})\d t \\
    &\leq \|a\|_\A\|\widehat{f}\|_{L^1(\R)}+A\|a\|_\A\| f\|_{\tau}<\infty.
\end{align*}
Since the integral $\int_{\R}|\widehat{f}(t)|\|e^{itb_{\alpha}}a \|_{\A}\d t$ converges independently of $\alpha$, one can find $R>0$ big enough such that \eqref{goal} holds. This finishes the proof. \end{proof}

\begin{rem}
    If one compares Lemma \ref{appidentity} with its analog in \cite{Fl24}, one will see that our proof here is a lot simpler. The difference, of course, is the fact that the subexponential growth obtained in Proposition \ref{growth} only depends on the norm of the corresponding element $a\in\A$ (Remark \ref{joingrowth}), whereas the polynomial growth estimate obtained in \cite{Fl24} depends on the support and, more critically, on the $L^2$-norm of the corresponding cross-sections. It seems unlikely that the bounded approximate unit fixed in \cite{Fl24} could be bounded in $L^2$-norm, making the application of something like Remark \ref{joingrowth} impossible and resulting in a more involved proof.
\end{rem}

With Lemma \ref{appidentity} at hand, we are finally able to derive the Wiener property for $\A$.

\begin{thm}\label{wienerprop}
    Let $\A$ be a $(k,p,q)$-differential subalgebra that contains a self-adjoint bounded approximate identity. Then $\A$ has the Wiener property.
\end{thm}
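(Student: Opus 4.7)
The plan is to argue by contradiction. Suppose $I \subset \A$ is a proper closed two-sided ideal with $h(I) = \emptyset$, so that no topologically irreducible $^*$-representation of $\A$ annihilates $I$. By Theorem \ref{minideals} applied with $F = \emptyset$, the minimal ideal $j(\emptyset)$ is then contained in $I$, so it suffices to prove that $j(\emptyset) = \A$: this forces $I = \A$ and contradicts the properness of $I$.

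To produce abundant elements of $j(\emptyset)$, fix an auxiliary parameter $\tau' \in (\tau,1)$; this automatically satisfies $\tau' > \log_k(\max\{k-1,p\})$. By Domar's theorem $\mathfrak D_{\tau'}$ is a regular commutative Banach algebra, so Remark \ref{normal} provides $f \in \mathfrak D_{\tau'}$ vanishing on a neighborhood of $0$ and identically $1$ on a neighborhood of $1$. In particular $f(0)=0$, $f(1)=1$, and the inclusion $\mathfrak D_{\tau'} \subset \mathfrak D_\tau$ gives $f \in \mathfrak D_\tau$ as well. Lemma \ref{appidentity} then yields
\[
\lim_\alpha \norm{f(b_\alpha)a - a}_{\A} = 0 \qquad \text{for every } a \in \A.
\]

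The main remaining step is to verify that $f(b_\alpha) \in j(\emptyset)$ for every $\alpha$. Set $B := \sup_\alpha \norm{b_\alpha}_{\A}$ and $g(x) := f(Bx)$. A direct computation gives $\widehat g(t) = B^{-1}\widehat f(t/B)$, and hence
\[
\norm{g}_\tau = \int_\R |\widehat f(s)|\,e^{B^\tau |s|^\tau}\,\d s.
\]
Because $\tau < \tau'$, the function $s \mapsto B^\tau |s|^\tau - |s|^{\tau'}$ is bounded above on $\R$, so there is a constant $C>0$ with $\norm{g}_\tau \leq C\norm{f}_{\tau'} < \infty$; in particular $g \in \mathfrak D_\tau$. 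Moreover $g$ vanishes on a neighborhood of $0$, and the change of variable $t \mapsto sB$ in the Bochner integral of Theorem \ref{DixBai}(i) shows that $g(b_\alpha/B) = f(b_\alpha)$. Since $b_\alpha/B$ is self-adjoint with $\norm{b_\alpha/B}_{\A} \leq 1$, the definition of $m(\emptyset)$ immediately gives $f(b_\alpha) = g(b_\alpha/B) \in m(\emptyset) \subset j(\emptyset)$.

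Combining the two previous paragraphs, the net $\big(f(b_\alpha)a\big)_\alpha$ lies in the closed two-sided ideal $j(\emptyset)$ and converges to $a$ in $\A$-norm, whence $a \in j(\emptyset)$ for every $a \in \A$. Therefore $j(\emptyset) = \A$, yielding the desired contradiction. The main technical hurdle is the scaling estimate ensuring $g \in \mathfrak D_\tau$; it is what forces us to pick $f$ from the strictly smaller algebra $\mathfrak D_{\tau'}$, and it uses in an essential way the flexibility $\tau < 1$ built into the standing Assumption.
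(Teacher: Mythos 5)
Your proof is correct and follows essentially the same route as the paper: Theorem \ref{minideals} with $F=\emptyset$ reduces everything to showing $j(\emptyset)=\A$, which follows from Lemma \ref{appidentity}. Your rescaling argument (passing to $g(x)=f(Bx)$ and to $\mathfrak D_{\tau'}$ with $\tau'>\tau$) carefully justifies the membership $f(b_\alpha)\in m(\emptyset)$ despite the normalization $\norm{a}_{\A}\le 1$ in the definition of $m(\emptyset)$ — a detail the paper's two-line proof leaves implicit.
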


\begin{proof}
    By Lemma \ref{appidentity}, $j(\emptyset)$ contains all of $\A$. Now, if $I$ is a closed two-sided ideal of $\A$ such that $h(I)=\emptyset$, then $\A=j(\emptyset) \subset I$ by Theorem \ref{minideals}.
\end{proof}

\section{Intertwining operators and automatic continuity}\label{autcont}

Let $\A$ be a Banach algebra. A Banach space $\mathcal X$ which is also an $\A$-bimodule is called a \emph{Banach $\A$-bimodule} if the maps 
$$
\A\times \mathcal X\ni(a,\xi)\mapsto a\xi \in\mathcal X \quad\text{ and }\quad \X\times\A\ni(\xi,a)\mapsto \xi a \in\mathcal X
$$ 
are jointly continuous. If we only have the continuity of the maps 
$$
\mathcal X\ni\xi\mapsto a\xi \in\mathcal X\quad\text{ and }\quad\mathcal X\ni\xi\mapsto \xi a \in\mathcal X
$$ 
for each $a\in\A$, then $\mathcal X$ is called a \emph{weak Banach $\A$-bimodule}.

\begin{defn}
    Let $\A$ be a Banach algebra and $\X_1,\X_2$ be weak Banach $\A$-bimodules. A linear map $\theta:\X_1\to\X_2$ is called an \emph{$\A$-intertwining operator} if for each $a\in\A$, the maps 
    $$
    \X_1\ni \xi\mapsto \theta(a\xi)-a\theta(\xi)\in \X_2\quad\text{ and }\quad \X_1\ni \xi\mapsto \theta(\xi a)-\theta(\xi)a\in \X_2
    $$ 
    are continuous.
\end{defn}

We mentioned in the introduction that intertwining operators generalize algebra homomorphisms, derivations, and bimodule homomorphisms. We will make this precise in the following example.
\begin{ex}\label{ex-inter}\begin{enumerate}
        \item[(i)] Every $\A$-bimodule homomorphism between weak Banach $\A$-bimodules is an $\A$-intertwining operator.
        \item[(ii)] If $\theta:\B\to\A$ is an algebra homomorphism, then $\A$ can be made into a weak Banach $\B$-bimodules with the actions 
        $$
        ab=a\theta(b)\quad\text{ and }\quad ba=\theta(b)a, \quad\text{for } a\in \A, b\in \B.
        $$ 
        With respect to this actions, $\theta$ is a $\B$-intertwining operator.
        \item[(iii)] Let $\X$ a weak Banach $\A$-bimodule. A derivation is a linear map $D:\A\to \X$ satisfying 
        $$
        D(ab)=D(a)b+aD(b).
        $$ 
        Every derivation is an $\A$-intertwining operator. 
    \end{enumerate}
\end{ex}

\begin{defn}
    Let $\A$ be a Banach algebra, and $\theta:\X_1\to\X_2$ an $\A$-intertwining operator between weak Banach $\A$-bimodules. Then 
    $$
    \mathscr I(\theta)=\{a\in\A\mid \text{the maps } \X_1\ni \xi\mapsto \theta(a\xi)\in \X_2,\,\X_1\ni \xi\mapsto \theta(\xi a)\in \X_2 \text{ are continuous } \forall a\in\A\}
    $$ 
    is the \emph{continuity ideal} of $\theta$.
\end{defn}

Note that $\mathscr I(\theta)$ is not necessarily closed, unless $\X_2$ is a Banach $\A$-bimodule. In order to introduce the next theorem, we now recall the notion of a set of synthesis.

\begin{defn}
    A closed subset $C\subset {\rm Prim}_*\A$ will be called a \emph{set of synthesis} if $k(C)$ is the unique closed two-sided ideal $I\subset \A$ such that $h(I)=C$.
\end{defn}

The following theorem was proven in \cite{Fl25} as a generalization of \cite[Theorem 2.3, Theorem 3.2]{Ru96}. The reader should note that we have already shown that our algebras automatically satisfy most of the assumptions, so the theorem can be applied with great generality. Any Banach algebra $\A$ will be considered a bimodule over itself with left/right multiplication, as described in Example \ref{ex-inter}.

\begin{thm}[\cite{Fl25}]\label{theprop}
    Let $\A$ be a reduced Banach $^*$-algebra, $\X_1$ a Banach $\A$-bimodule, $\X_2$ a weak Banach $\A$-bimodule and $\theta:\X_1\to\X_2$ an $\A$-intertwining operator. Further suppose that 
    \begin{enumerate}
        \item[(i)] For all $a\in\A_{\rm sa}$, $\A(a)$ is a regular Banach algebra.
        \item[(ii)] $\A$ has the Wiener property $(W)$.
        \item[(iii)] Every finite subset $F\subset{\rm Prim}_*\A$ such that all $P\in F$ has finite codimension is a set of synthesis for $\A$.
    \end{enumerate} Then $\overline{\mathscr I(\theta)}^{\norm{\cdot}_{\A}}$ has finite codimension in $\A$. Furthermore, if $\X_1=\A$ and $\A$ also satisfies \begin{enumerate}
        \item[(iv)] Every closed two-sided $^*$-ideal $I\subset \A$ of finite codimension has a bounded left approximate identity.
    \end{enumerate} 
    Then $\theta$ is continuous if and only if $\mathscr I(\theta)$ is closed. \end{thm}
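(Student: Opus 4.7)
The plan is to follow the Bade--Curtis--Sinclair tradition in automatic continuity, organized around the \emph{separating space}
$$
\mathfrak S(\theta)=\{y\in\X_2\mid \exists\,(x_n)\subset\X_1,\ x_n\to 0\text{ and }\theta(x_n)\to y\}.
$$
This is a closed subspace of $\X_2$, and by a standard closed-graph argument $\theta$ is continuous if and only if $\mathfrak S(\theta)=\{0\}$. Moreover, because $\theta$ is an $\A$-intertwining operator, a short diagram chase shows that $\mathfrak S(\theta)$ is a weak Banach $\A$-subbimodule of $\X_2$ and, for each $a\in\A$, the maps $\xi\mapsto \theta(a\xi)-a\theta(\xi)$ and $\xi\mapsto \theta(\xi a)-\theta(\xi)a$ are continuous; so $a\in\mathscr I(\theta)$ iff these defect maps alone suffice, which happens iff left/right multiplication by $a$ annihilates $\mathfrak S(\theta)$ modulo the continuous defects. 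I would use this translation to identify $\mathscr I(\theta)$ with the annihilator (in the bimodule sense) of $\mathfrak S(\theta)$, up to continuous perturbation.

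Next I would localize on single self-adjoint elements. Fix $a\in\A_{\rm sa}$; by hypothesis (i) the commutative Banach algebra $\A(a)$ is regular, and (by Theorem~\ref{DixBai}) the Dixmier--Baillet calculus $\mathfrak D_\tau\ni f\mapsto f(a)\in\widetilde{\A}(a,1)$ is available. The Sinclair lemma (restricted to $\A(a)$) combined with regularity shows that for each $a$ there is a finite set $F_a\subset{\rm Spec}(a)$ such that every $f\in\mathfrak D_\tau$ vanishing on a neighborhood of $F_a$ produces an element $f(a)\in\mathscr I(\theta)$. Consequently, $h(\overline{\mathscr I(\theta)})$ consists only of primitive $^*$-ideals $P$ for which, in every representative representation, the spectrum of $\pi(a)$ has finite intersection with the obstruction set; feeding this through the Wiener property (ii) and the correspondence ${\rm Prim}\,\mathrm{C}^*(\A)\to{\rm Prim}_*\A$, one obtains that each $P\in h(\overline{\mathscr I(\theta)})$ has finite codimension.

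With finite codimensionality of each hull element in hand, hypothesis (iii) forces $h(\overline{\mathscr I(\theta)})$ itself to be a finite set of primitive ideals (otherwise $\mathfrak S(\theta)$ would fail to be separating) and guarantees that $\overline{\mathscr I(\theta)}=k\bigl(h(\overline{\mathscr I(\theta)})\bigr)$ by the synthesis property. A finite intersection of finite-codimension ideals still has finite codimension in $\A$, which is the first assertion.

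For the \textquoteleft furthermore\textquoteright\ clause, let $I=\overline{\mathscr I(\theta)}$ and assume $\mathscr I(\theta)$ is closed. Then $I=\mathscr I(\theta)$ is of finite codimension in $\A=\X_1$; one refines $I$ by passing to a closed two-sided $^*$-ideal $J\subset I$ still of finite codimension (take $J=I\cap I^*\cdot I$ or similar), and invokes hypothesis (iv) to equip $J$ with a bounded left approximate identity $(e_\lambda)$. Cohen factorization inside $J$ writes every element of $J\X_1$ as $e\xi$ with $e\in J,\ \xi\in\X_1$, and on such products continuity of $\theta$ follows directly from $e\in\mathscr I(\theta)$. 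Since $J$ has finite codimension and $\X_1=\A$ is a Banach $\A$-bimodule, continuity of $\theta|_J$ automatically extends through the finite-dimensional complement, yielding continuity on all of $\A$. The converse is trivial.

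The step I anticipate as the main obstacle is the middle one: explicitly producing the elements $f(a)\in\mathscr I(\theta)$ from the $\mathfrak D_\tau$-calculus and Sinclair's lemma, and then leveraging them to show that every point of $h(\overline{\mathscr I(\theta)})$ corresponds to a \emph{finite-codimensional} primitive ideal. This is precisely where the interplay between local regularity, the smooth functional calculus of Theorem~\ref{DixBai}, and the Wiener property is delicate; the remaining steps are relatively formal once this control has been achieved.
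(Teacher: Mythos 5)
You should first be aware that the paper does not actually prove this statement: it is imported verbatim from \cite{Fl25} (Theorem 3.6 there), and the ``proof'' in the paper consists solely of explaining why the two modified hypotheses still suffice --- in particular, that $\overline{\mathscr I(\theta)}^{\norm{\cdot}_{\A}}$ coincides with the intersection of the primitive $^*$-ideals containing $\mathscr I(\theta)$ and is therefore automatically a $^*$-ideal, so that the weakened condition \emph{(iv)} applies to it. Your proposal instead tries to rebuild the Runde/Bade--Curtis--Sinclair argument from scratch; the architecture (separating space, $a\in\mathscr I(\theta)$ iff $a$ annihilates $\mathfrak S(\theta)$ on both sides, localization to $\A(a)$, hull--kernel plus synthesis, Cohen factorization for the ``furthermore'' clause) is the right one, but as written it has genuine gaps.

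Concretely: (1) you invoke Theorem \ref{DixBai} and the $\mathfrak D_\tau$-calculus, but the present theorem is stated for an \emph{arbitrary} reduced Banach $^*$-algebra satisfying \emph{(i)}--\emph{(iii)}; the Dixmier--Baillet calculus exists only for $(k,p,q)$-differential subalgebras and is not a consequence of hypothesis \emph{(i)}. The localization step must run on the Gelfand calculus of the regular commutative algebra $\A(a)$ alone. (2) The roles of \emph{(ii)} and \emph{(iii)} are misassigned: the finiteness and finite codimensionality of the relevant hull come from the Sinclair stability lemma applied to orthogonal systems manufactured from regularity, not from hypothesis \emph{(iii)}, and the parenthetical ``otherwise $\mathfrak S(\theta)$ would fail to be separating'' is not an argument. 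Synthesis \emph{(iii)} is used only afterwards, to upgrade the inclusion $\overline{\mathscr I(\theta)}\subseteq k\big(h(\mathscr I(\theta))\big)$ to an equality, and the Wiener property \emph{(ii)} is what guarantees that a proper closed ideal has nonempty hull so that the hull--kernel machinery sees it at all. (3) You explicitly defer the central step --- producing the elements $f(a)\in\mathscr I(\theta)$ and deducing finite codimension of each ideal in the hull --- calling it ``the main obstacle''; since that is precisely where the proof lives, the proposal is an outline rather than a proof. (4) In the ``furthermore'' part the ad hoc replacement $J=I\cap I^*\cdot I$ is unnecessary and unmotivated: $\overline{\mathscr I(\theta)}$ is already a $^*$-ideal, being an intersection of kernels of $^*$-representations, which is exactly the observation the paper makes to justify using the weakened hypothesis \emph{(iv)}.
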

    \begin{proof}
        This is exactly \cite[Theorem 3.6]{Fl25}, except for conditions \emph{(i)} and \emph{(iv)}. Condition \emph{(i)} is obviously stronger now, so let us explain why the weakening of Condition \emph{(iv)} still implies the same result. Indeed, following the proof of \cite[Theorem 3.6]{Fl25}, one sees that 
        $$ \overline{\mathscr I(\theta)}^{\norm{\cdot}_{\A}}=\bigcap_{\substack{P\supset \mathscr I(\theta) \\ P\in {\rm Prim}_*\A} }P=k\big(\mathscr I(\theta) \big),$$
        which shows that $\overline{\mathscr I(\theta)}^{\norm{\cdot}_{\A}}$ is a $^*$-ideal. Now, we note that condition \emph{(iv)} was only required to ensure that $\overline{\mathscr I(\theta)}^{\norm{\cdot}_{\A}}$ contains a left approximate identity.
    \end{proof}

As in \cite{Fl25}, condition \emph{(iv)} in Theorem \ref{theprop} is the most restrictive. In fact, we do not know of any simple way to imply it with generality. It is satisfied by $C^*$-algebras and amenable Banach algebras \cite[Proposition VII.2.31]{He89}. The amenability condition, however, seems way too restrictive. 

Condition \emph{(iv)} can obviously be bypassed if nontrivial ideals of finite codimension do not exist inside $\A$. The next result aims in that direction.

\begin{thm}[\cite{Fl25}]\label{theprop-simple}
    Let $\A$ be a symmetric, reduced, unital Banach $^*$-algebra, $\X_1$ a Banach $\A$-bimodule, $\X_2$ a weak Banach $\A$-bimodule and $\theta:\X_1\to\X_2$ an $\A$-intertwining operator. Further suppose that 
    \begin{enumerate}
        \item[(i)] For all $a\in\A_{\rm sa}$, $\A(a)$ is a regular Banach algebra.
        \item[(ii)] $\A$ has the Wiener property $(W)$.
        \item[(iii)] ${\rm C^*}(\A)$ has no proper closed two-sided ideals with finite codimension.
    \end{enumerate} Then $\theta$ is continuous.
\end{thm}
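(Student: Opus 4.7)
My plan is to combine Theorem \ref{theprop} with the unitality of $\A$ via a Neumann series argument. The strategy is to first deduce $\overline{\mathscr I(\theta)}^{\|\cdot\|_\A}=\A$ from Theorem \ref{theprop} together with hypothesis (iii), then promote this norm-density of the continuity ideal to continuity of $\theta$ by exploiting invertibility of left multiplication.

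The first step is to verify the hypotheses of Theorem \ref{theprop}. Conditions (i) and (ii) there coincide with our (i) and (ii). For condition (iii) of Theorem \ref{theprop}, I would show that no primitive $^*$-ideal $P\in{\rm Prim}_*\A$ has finite codimension in $\A$: if $P=\ker\Pi$ for a topologically irreducible $^*$-representation $\Pi:\A\to\mathbb B(\mathcal H)$ with $\mathcal H$ finite-dimensional, then $\Pi$ extends to $\tilde\Pi:{\rm C^*}(\A)\to\mathbb B(\mathcal H)$, and $\ker\tilde\Pi$ would be a proper closed two-sided ideal of finite codimension in ${\rm C^*}(\A)$, contradicting hypothesis (iii). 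Hence no nonempty finite subset of ${\rm Prim}_*\A$ consists of ideals of finite codimension, and condition (iii) of Theorem \ref{theprop} is vacuously satisfied.

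Applying Theorem \ref{theprop} to $\theta$ then yields that $\overline{\mathscr I(\theta)}^{\|\cdot\|_\A}$ is a closed $^*$-ideal of finite codimension in $\A$, given explicitly (following the formula displayed in the proof of Theorem \ref{theprop}) by $\overline{\mathscr I(\theta)}^{\|\cdot\|_\A}=k(F)$, where $F\subset{\rm Prim}_*\A$ is the set of primitive $^*$-ideals containing $\overline{\mathscr I(\theta)}$. Each such $P\in F$ satisfies $\dim(\A/P)\leq\dim(\A/\overline{\mathscr I(\theta)})<\infty$, which by the previous step is impossible. Therefore $F=\emptyset$ and $\overline{\mathscr I(\theta)}^{\|\cdot\|_\A}=k(\emptyset)=\A$; in particular, $\mathscr I(\theta)$ is norm-dense in $\A$.

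To conclude, I would invoke unitality. Since $\X_1$ is a Banach $\A$-bimodule with jointly continuous action, there exists $M>0$ such that $\|a\xi\|_{\X_1}\leq M\|a\|_\A\|\xi\|_{\X_1}$ for all $a\in\A$ and $\xi\in\X_1$. By density, pick $a\in\mathscr I(\theta)$ with $\|a-1\|_\A<1/M$; the left-multiplication operator $L_a\in\mathbb B(\X_1)$ then satisfies $\|L_a-I\|_{\mathbb B(\X_1)}\leq M\|a-1\|_\A<1$, so $L_a$ is invertible in $\mathbb B(\X_1)$ via the Neumann series. Since $a\in\mathscr I(\theta)$, the map $\theta\circ L_a:\xi\mapsto\theta(a\xi)$ is continuous from $\X_1$ to $\X_2$, and therefore $\theta=(\theta\circ L_a)\circ L_a^{-1}$ is continuous as a composition of two continuous maps. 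The most delicate point of the argument is the transfer, used in the first two steps, between \emph{finite codimension at the level of ${\rm C^*}(\A)$} (our hypothesis (iii)) and \emph{finite codimension in ${\rm Prim}_*\A$}, which depends on $\A$ being reduced and symmetric so that topologically irreducible $^*$-representations lift faithfully to ${\rm C^*}(\A)$.
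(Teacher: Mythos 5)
The paper does not actually prove this statement: Theorem \ref{theprop-simple} is imported verbatim from \cite{Fl25}, and unlike its companion Theorem \ref{theprop} it receives no proof sketch here, so there is no in-paper argument to compare yours against. Your derivation of it from Theorem \ref{theprop} is sound and essentially the natural one. The reduction of hypothesis (iii) is correct: a topologically irreducible $^*$-representation of $\A$ with finite-codimensional kernel necessarily acts on a finite-dimensional Hilbert space (for $\xi\neq 0$ the subspace $\Pi(\A)\xi$ is finite-dimensional, hence closed and invariant, hence all of $\H$), it extends to ${\rm C^*}(\A)$, and its kernel there would violate (iii). One small imprecision: the resulting case $F=\emptyset$ in condition (iii) of Theorem \ref{theprop} is not \emph{vacuous} --- you still need $\emptyset$ to be a set of synthesis, i.e.\ that $\A$ is the only closed two-sided ideal with empty hull, which is exactly what the Wiener property (ii) provides. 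Your conclusion $\overline{\mathscr I(\theta)}^{\norm{\cdot}_\A}=\A$ then follows either from the displayed formula or, more directly, from $(W)$: a proper closed two-sided ideal of finite codimension would be contained in a finite-codimensional element of ${\rm Prim}_*\A$, which you have excluded.

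The one point you should make explicit is that the Neumann-series step silently assumes $\X_1$ is unit-linked, i.e.\ $1\xi=\xi$ for all $\xi\in\X_1$; otherwise $L_1\neq I$ and the estimate $\norm{L_a-I}\leq M\norm{a-1}_\A$ fails. This is not cosmetic: if one admits the zero bimodule actions on a Banach space $\X_1$, then any discontinuous linear map into a similarly trivial $\X_2$ is an $\A$-intertwining operator, so the statement is false without the unit-linked convention. Presumably \cite{Fl25} (following Runde) adopts that convention for bimodules over unital algebras, and under it your argument is complete; but the hypothesis deserves a sentence.
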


Note that conditions \emph{(i)} and \emph{(ii)} were already proven for $(k,p,q)$-differential subalgebras (see Theorem \ref{locallyreg} and Theorem \ref{wienerprop}). So the next theorem is an immediate application of Theorem \ref{theprop-simple}.

\begin{thm}\label{theprop-simple-applied}
    Let $\A$ be a unital $(k,p,q)$-differential subalgebra. Also consider a Banach $\A$-bimodule $\X_1$, a weak Banach $\A$-bimodule $\X_2$, and an $\A$-intertwining operator $\theta:\X_1\to\X_2$. If ${\rm C^*}(\A)$ has no proper closed two-sided ideals with finite codimension, then $\theta$ is continuous.
\end{thm}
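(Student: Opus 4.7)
The plan is to verify that $\A$ satisfies every hypothesis of Theorem~\ref{theprop-simple} and then invoke that theorem directly with the given $\X_1$, $\X_2$, and $\theta$. This is essentially a bookkeeping exercise, as the heavy lifting has already been accomplished earlier in the paper.

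First I would check the standing structural assumptions on $\A$. Unitality is given by hypothesis. The algebra $\A$ is reduced because it sits as a Banach $^*$-subalgebra of the $C^*$-algebra $\B$: any faithful $^*$-representation of $\B$ (for instance the GNS representation) restricts to a faithful $^*$-representation of $\A$, so the canonical map $\A\to{\rm C^*}(\A)$ is injective. Symmetry of $\A$ is the content of Theorem~\ref{symmetry}.

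Next I would verify the three numbered conditions of Theorem~\ref{theprop-simple}. Condition~(i) — regularity of $\A(a)$ for every $a\in\A_{\rm sa}$ — is proven inside Theorem~\ref{locallyreg}, whose argument actually establishes the stronger pointwise statement rather than merely the dense version in the definition of local regularity. Condition~(ii), the Wiener property $(W)$, is Theorem~\ref{wienerprop}: the hypothesis there of a self-adjoint bounded approximate identity is trivially satisfied here by $\{1\}\subset\A$ since $\A$ is unital. Condition~(iii) is precisely what we are assuming. With all hypotheses in place, Theorem~\ref{theprop-simple} delivers the continuity of $\theta$. Since every step is a direct invocation of earlier material, there is no genuine obstacle to overcome at this stage; the substance of the result rests on the earlier Theorems~\ref{symmetry}, \ref{locallyreg}, and \ref{wienerprop}, together with the automatic continuity machinery of \cite{Fl25} packaged in Theorem~\ref{theprop-simple}.
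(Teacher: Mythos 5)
Your argument is correct and is essentially the paper's own proof: the paper likewise deduces the theorem as an immediate application of Theorem~\ref{theprop-simple}, citing Theorem~\ref{locallyreg} for condition (i) and Theorem~\ref{wienerprop} for condition (ii). Your version is slightly more careful in that it explicitly records reducedness, symmetry (Theorem~\ref{symmetry}), and the fact that the unit supplies the self-adjoint bounded approximate identity needed to invoke Theorem~\ref{wienerprop} — details the paper leaves implicit.
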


In order to also apply Theorem \ref{theprop} in our setting, we still need to verify its condition \emph{(iii)}. We will do that now. We follow an approach inspired by \cite[Lemma 10]{Ba83}.

\begin{lem}\label{barnes}
    Let $\A$ be a $(k,p,q)$-differential subalgebra. Let $I$ be a closed two-sided $^*$-ideal of $\A$ that contains a bounded left approximate identity. If $a\in I$ and $\epsilon>0$, then there exist $b_1,b_2\in I$ such that 
    $$
    \norm{a-b_1a}_{\A}<\epsilon\quad\text{ and }\quad b_2b_1=b_1. 
    $$
\end{lem}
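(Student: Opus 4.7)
The plan is to realize both $b_1$ and $b_2$ as functional calculus images of a single self-adjoint element of $I$, applying Theorem \ref{DixBai}. First, note that $I$ is itself a Banach $^*$-algebra with continuous involution inherited from $\A$, so Remark \ref{onesidedunit} provides a self-adjoint, bounded approximate identity $(e_\alpha)\subset I$. Since $a\in I$, one has $\|e_\alpha a - a\|_{\A}\to 0$.

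Next, I would harvest two specific functions from $\mathfrak D_\tau$ using Domar's regularity through Remark \ref{normal}. Fix $\delta\in(0,1)$ and $M>1$, and apply Remark \ref{normal} with closed set $X=[-\delta,\delta]\cup\{x:|x|\geq M\}$ and compact set $Y=\{1\}$ to obtain $f\in\mathfrak D_\tau$ with $f\equiv 0$ on $X$ and $f(1)=1$; in particular, ${\rm Supp}(f)\subset[-M,-\delta]\cup[\delta,M]$ is compact, $f(0)=0$, and $f(1)=1$. A second application with $X'=\{0\}$ and $Y'={\rm Supp}(f)$ produces $g\in\mathfrak D_\tau$ with $g(0)=0$ and $g\equiv 1$ on ${\rm Supp}(f)$. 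The crucial consequence is that $g\cdot f = f$ pointwise on $\R$, hence as elements of $\mathfrak D_\tau$.

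Inspection of the proof of Lemma \ref{appidentity} reveals that its conclusion only requires the net $(b_\alpha)$ to be $\A$-bounded with $b_\alpha a\to a$ in $\A$-norm: the tail integral is controlled via Remark \ref{joingrowth}, which depends only on the $\A$-boundedness of $(b_\alpha)$, while the main integral reduces to the convergence $\|b_\alpha a - a\|_{\A}\to 0$. Consequently, Lemma \ref{appidentity} applies verbatim with $(e_\alpha)\subset I$ and the given $a\in I$, yielding $\|f(e_\alpha)a - a\|_{\A}\to 0$. Choose $\alpha_0$ with $\|f(e_{\alpha_0})a - a\|_{\A}<\epsilon$ and define
$$
b_1 := f(e_{\alpha_0}), \qquad b_2 := g(e_{\alpha_0}).
$$

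Since $f(0)=g(0)=0$, Theorem \ref{DixBai}(i) places both $b_1$ and $b_2$ in $\A(e_{\alpha_0})\subseteq I$ (as $I$ is a closed subalgebra containing $e_{\alpha_0}$). The homomorphism property of $\varphi_{e_{\alpha_0}}$ from Theorem \ref{DixBai}(iv), combined with the identity $gf=f$, gives
$$
b_2 b_1 = g(e_{\alpha_0})f(e_{\alpha_0}) = (gf)(e_{\alpha_0}) = f(e_{\alpha_0}) = b_1,
$$
while the first inequality $\|a - b_1 a\|_{\A}<\epsilon$ was arranged in the previous step. The main technical obstacle is confirming that $f$ and $g$ can be simultaneously chosen inside $\mathfrak D_\tau$ with the prescribed vanishing, support, and value conditions; this reduces cleanly to the two applications of Remark \ref{normal} described above, made possible by Domar's regularity of $\mathfrak D_\tau$.
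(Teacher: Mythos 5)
Your proof is correct and follows essentially the same route as the paper's: obtain a self-adjoint bounded approximate identity in $I$ via Remark \ref{onesidedunit}, pick a compactly supported $f\in\mathfrak D_\tau$ with $f(0)=0$, $f(1)=1$ and a $g\in\mathfrak D_\tau$ equal to $1$ on ${\rm Supp}(f)$, run the argument of Lemma \ref{appidentity} for the net in $I$, and set $b_1=f(e_{\alpha_0})$, $b_2=g(e_{\alpha_0})$. Your extra care in arranging ${\rm Supp}(f)$ away from $0$ so that $g$ can also be taken with $g(0)=0$ (ensuring $b_2\in I$) is a detail the paper leaves implicit, but the argument is the same.
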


\begin{proof}
    Without loss of generality, we may assume that $I$ contains a self-adjoint bounded approximate identity $b_\alpha\in I$ (see Remark \ref{onesidedunit}).

    Let $f\in \mathfrak D_\tau$ such that $f(0)=0$ and $f(1)=1$. Note that $f(b_\alpha)\in I$ and, repeating the proof of Lemma \ref{appidentity}, one gets that 
    $$
    \lim_{\alpha} \| f(b_\alpha)a-a\|_\A=0, \quad \text{ for all }a\in I.
    $$
    If we further suppose that $f$ is compactly supported and pick $g\in \mathfrak D_\tau$ such that $g\equiv 1$ on ${\rm Supp}(f)$, then we also have
    $$
    g(b_\alpha)f(b_\alpha)=(g\cdot f)(b_\alpha)=f(b_\alpha).
    $$
    Choosing $\alpha$ large enough proves the claim.
\end{proof}

\begin{thm}\label{synth}
    Let $\A$ be a $(k,p,q)$-differential subalgebra and suppose that every closed two-sided $^*$-ideal $I\subset\A$ of finite codimension has a bounded left approximate identity. Let $F$ be a finite closed subset of $\,{\rm Prim}_*\A$ such that every $P\in F$ has finite codimension. Then $F$ is a set of synthesis.
\end{thm}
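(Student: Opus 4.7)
The structural starting point is that $k(F)=\bigcap_{P\in F}P$ is a closed two-sided $^*$-ideal of finite codimension in $\A$: the quotient $\A/k(F)$ embeds in the finite-dimensional direct sum $\bigoplus_{P\in F}\A/P$, each summand being finite-dimensional by hypothesis. By the standing assumption, $k(F)$ admits a bounded left approximate identity, which Remark \ref{onesidedunit} upgrades to a self-adjoint two-sided one. Let $I$ be any closed two-sided ideal of $\A$ with $h(I)=F$. The inclusion $I\subset k(F)$ is automatic, so everything reduces to proving $k(F)\subset I$.

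Fix $a\in k(F)$ and $\epsilon>0$. Applying Lemma \ref{barnes} to the ideal $k(F)$ produces elements $b_1,b_2\in k(F)$ satisfying $\|a-b_1a\|_\A<\epsilon$ and $b_2b_1=b_1$. If we can establish $b_1\in I$, then $b_1a\in I$, so $a$ lies within $\epsilon$ of $I$ in the $\A$-norm; letting $\epsilon\to 0$ and using that $I$ is closed yields $a\in I$, and hence $k(F)\subset I$. The whole argument therefore rests on showing $b_1\in I$. We do this by passing to the quotient Banach algebra $\A/I$, writing $\bar b_i=b_i+I$, and proving that $\bar b_2$ is quasinilpotent. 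Granting this, $1-\bar b_2$ becomes invertible in the unitization $\widetilde{\A/I}$, and the identity $(1-b_2)b_1=0$, projected to $\A/I$, becomes $(1-\bar b_2)\bar b_1=0$, forcing $\bar b_1=0$.

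Quasinilpotence of $\bar b_2$ in $\A/I$ is obtained by identifying the Jacobson radical of $\A/I$ with $k(F)/I$. Algebraic primitive ideals of $\A/I$ correspond bijectively to algebraic primitive ideals of $\A$ containing $I$. Since $\A$ is symmetric (Theorem \ref{symmetry}) and $^*$-regular (Theorem \ref{regularity}), the algebraic primitive ideals of $\A$ can be identified with the primitive $^*$-ideals in ${\rm Prim}_*\A$ via standard results on symmetric Banach $^*$-algebras (cf.\ \cite[Chapters 10--11]{Pa01}); combined with the hypothesis $h(I)=F$, this forces every algebraic primitive of $\A/I$ to have the form $P/I$ with $P\in F$. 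Therefore the Jacobson radical of $\A/I$ equals $\bigcap_{P\in F}P/I=k(F)/I$, and $\bar b_2\in k(F)/I$ is automatically quasinilpotent.

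The main obstacle is precisely this identification of the Jacobson radical of $\A/I$ with $k(F)/I$: one must carefully invoke the coincidence of algebraic primitive ideals with primitive $^*$-ideals in the symmetric, $^*$-regular setting, which is the bridge between the hull-kernel framework of the paper and the quasinilpotence argument used here. Every other ingredient---the finite codimensionality of $k(F)$, the Barnes-type factorization of Lemma \ref{barnes}, and the closedness of $I$---has already been established in the preceding sections.
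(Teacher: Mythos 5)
Your overall strategy is genuinely different from the paper's: the paper defines the set $M=\{b\in k(F)\mid \exists a\in k(F),\ ab=b\}$, uses Lemma \ref{barnes} to show $h(M)=F$ and that $M$ acts as local units on $k(F)$, and then invokes Ludwig's hull--kernel lemma \cite[Lemma 2]{Lu80} to force $M\subset I$ and hence $k(F)\subset I$. Your reduction to ``$b_1\in I$'' via $b_2b_1=b_1$ and invertibility of $1-\bar b_2$ in $\widetilde{\A/I}$ is sound, as is the final approximation step. The problem is the step you yourself flag as the main obstacle: the assertion that for a symmetric, $^*$-regular Banach $^*$-algebra the \emph{algebraic} primitive ideals coincide with the primitive $^*$-ideals. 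This is not a standard consequence of those two hypotheses and is not established in the paper. Symmetry gives you Pt\'ak's inequality $\rho(a)\le\rho(a^*a)^{1/2}$ and hence the equality of the Jacobson radical with the reducing ideal, i.e.\ that the \emph{intersections} $\bigcap{\rm Prim}\,\A$ and $\bigcap{\rm Prim}_*\A$ agree --- not that the two families of ideals agree. Likewise, $^*$-regularity concerns the map ${\rm Prim}\,{\rm C^*}(\A)\to{\rm Prim}_*\A$ and says nothing about ${\rm Prim}\,\A$. Whether ${\rm Prim}\,\A={\rm Prim}_*\A$ is a genuinely hard question even for symmetric group algebras, so as written the identification of ${\rm Rad}(\A/I)$ with $k(F)/I$ is unsupported, and with it the quasinilpotence of $\bar b_2$.

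The argument can be repaired, but not by the route you cite. If $I$ is a closed two-sided \emph{$^*$-ideal}, then $\A/I$ is again a symmetric Banach $^*$-algebra, and the radical-equals-reducing-ideal theorem applied to $\A/I$ gives exactly ${\rm Rad}(\A/I)=k(h(I))/I=k(F)/I$, which is all you need; for a general closed two-sided ideal $I$ with $h(I)=F$ one first passes to $I\cap I^*$, which is a $^*$-ideal with the same hull (primitive $^*$-ideals are self-adjoint and prime, so $h(I\cap I^*)=h(I)\cup h(I^*)=F$), and $k(F)\subset I\cap I^*\subset I$ then finishes the proof. Note that your proof must handle non-$^*$-ideals, since the paper's definition of a set of synthesis quantifies over all closed two-sided ideals. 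With these two repairs your approach goes through and is a legitimate alternative to the paper's use of Ludwig's lemma; without them, the quasinilpotence claim is a genuine gap.
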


\begin{proof}
    Define 
    $$
    M=\{b\in k(F)\mid \exists a \in k(F) \text{ such that }ab=b\}.
    $$
    One obviously has that $h(M)\supset F$, but Lemma \ref{barnes} actually implies $h(M)= F$. Indeed, $k(F)=\cap_{P\in F}P$ is a finite codimensional, closed two-sided $^*$-ideal, so for every $a\in k(F)$ there is a sequence $b_n\in M$ making $a=\lim_{n\to\infty} b_na$. Therefore $a\in P'$, for all $P'\in h(M)$ and $h(M)= F$. 
    
    It clear that $F\subset h(a)$ for all the elements $a$ involved in the definition of $M$. Now, if $I$ is a closed two-sided ideal of $\A$ with $h(I)=F$, then because of \cite[Lemma 2]{Lu80}, we have $M\subset I$. Then applying Lemma \ref{barnes} again, we have $k(F)\subset I$ and hence $I=k(F)$.
\end{proof}

With this, we can state our general criterion for automatic continuity. 

\begin{thm}\label{theprop-applied}
    Let $\A$ be a $(k,p,q)$-differential subalgebra where all closed $^*$-ideals of finite codimension have a bounded left approximate identity. Let $\X$ be a weak Banach $\A$-bimodule and $\theta:\A\to\X$ an $\A$-intertwining operator. Then $\theta$ is continuous if and only if $\mathscr I(\theta)$ is closed. 
\end{thm}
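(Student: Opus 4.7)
The statement is essentially a repackaging of Theorem \ref{theprop}, applied to our setting. My plan is therefore to verify each of the four hypotheses of Theorem \ref{theprop} for the algebra $\A$ and the map $\theta$, and then conclude.

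First I would note that $\A$ is itself a closed $^*$-ideal of $\A$ of finite codimension (codimension $0$), so by hypothesis it admits a bounded left approximate identity. Combined with Remark \ref{onesidedunit}, this upgrades to a self-adjoint bounded approximate identity in $\A$. Hypotheses (i) and (ii) of Theorem \ref{theprop} are then immediate: (i) is Theorem \ref{locallyreg}, which in fact yields regularity of $\A(a)$ for \emph{every} self-adjoint $a$ (stronger than local regularity), while (ii) is Theorem \ref{wienerprop} applied with the self-adjoint bounded approximate identity just produced.

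Next I would verify condition (iii) of Theorem \ref{theprop}: every finite subset $F\subset{\rm Prim}_*\A$ whose members all have finite codimension is a set of synthesis. This is precisely the content of Theorem \ref{synth}, whose hypotheses match ours verbatim (we are assuming that every finite-codimensional closed $^*$-ideal has a bounded left approximate identity). Condition (iv) of Theorem \ref{theprop} is also exactly our standing hypothesis, so nothing else needs to be checked.

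With all four conditions verified and with $\X_1 = \A$ (viewed as a Banach $\A$-bimodule via left and right multiplication, as in Example \ref{ex-inter}(ii) applied to the identity) and $\X_2 = \X$, Theorem \ref{theprop} delivers both conclusions: $\overline{\mathscr I(\theta)}^{\norm{\cdot}_\A}$ has finite codimension in $\A$, and $\theta$ is continuous if and only if $\mathscr I(\theta)$ is closed. The ``in particular'' clause about $\X$ being an (honest) Banach $\A$-bimodule would follow because in that case $\mathscr I(\theta)$ is automatically closed: continuity of the maps $\xi\mapsto \theta(a\xi)$ and $\xi\mapsto\theta(\xi a)$ in a Banach bimodule setting passes to norm limits of $a$. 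I expect no serious obstacle, since all heavy lifting has been carried out in earlier sections; the only step requiring any care is the verification that Theorem \ref{wienerprop} is available, i.e.\ producing a self-adjoint bounded approximate identity in $\A$ from the hypothesis by taking $I=\A$ together with Remark \ref{onesidedunit}.
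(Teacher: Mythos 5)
Your proposal is correct and follows essentially the same route as the paper: the paper's proof simply cites Theorem \ref{theprop}, noting that conditions \emph{(i)} and \emph{(ii)} are Theorems \ref{locallyreg} and \ref{wienerprop}, condition \emph{(iii)} is Theorem \ref{synth}, and condition \emph{(iv)} is the standing hypothesis. Your extra observation that the hypothesis applied to $I=\A$ (codimension $0$) together with Remark \ref{onesidedunit} supplies the self-adjoint bounded approximate identity needed for Theorem \ref{wienerprop} is a detail the paper leaves implicit, and it is verified correctly.
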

\begin{proof}
    It follows from Theorem \ref{theprop}. \emph{(i)} and \emph{(ii)} were proven in Theorem \ref{locallyreg} and Theorem \ref{wienerprop}. Condition \emph{(iii)} is covered by Theorem \ref{synth} and condition \emph{(iv)} is taken as an assumption.
\end{proof}

In the setting of the above theorem, if $\X$ is a Banach $\A$-bimodule instead of a weak Banach $\A$-bimodule, then $\mathscr I(\theta)$ is closed and we obtain the (automatic) continuity of $\theta$. This is recorded in the next corollaries.

\begin{cor}\label{thecor-applied}
    Let $\A$ be a $(k,p,q)$-differential subalgebra where all closed $^*$-ideals of finite codimension have a bounded left approximate identity. Let $\X$ be a Banach $\A$-bimodule and $\theta:\A\to\X$ an $\A$-intertwining operator. Then $\theta$ is continuous. 
\end{cor}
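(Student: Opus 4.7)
The plan is to invoke Theorem \ref{theprop-applied} directly; the entire content of the corollary reduces to observing that the continuity ideal $\mathscr I(\theta)$ is automatically closed when $\X$ is a (full) Banach $\A$-bimodule rather than merely a weak one. All the other hypotheses of Theorem \ref{theprop-applied} are satisfied verbatim (since $(k,p,q)$-differentiality and the approximate-identity assumption are carried over), so the only thing left to verify is this closedness, exactly as stated in the paragraph preceding the corollary.

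To prove the closedness, I would use the \emph{separating space}
\[
\mathfrak S(\theta)=\{\xi\in\X\mid \exists\,(b_n)\subset\A,\ b_n\to 0 \text{ and } \theta(b_n)\to \xi\},
\]
which is a closed linear subspace of $\X$ by a standard diagonal argument. The key step is to prove the identification
\[
\mathscr I(\theta)=\{a\in\A\mid a\cdot\mathfrak S(\theta)=\mathfrak S(\theta)\cdot a=\{0\}\}.
\]
The inclusion $\subset$ is routine: if $a\in\mathscr I(\theta)$, $b_n\to 0$, $\theta(b_n)\to \xi$, then $ab_n\to 0$ by joint continuity of the action; writing $\theta(ab_n)=a\theta(b_n)+T_a(b_n)$, where $T_a(b):=\theta(ab)-a\theta(b)$ is continuous by the intertwining hypothesis, one gets $\theta(ab_n)\to a\xi$, while the continuity of $b\mapsto\theta(ab)$ forces $\theta(ab_n)\to 0$, so $a\xi=0$. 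The reverse inclusion $\supset$ is the main (but classical) step: apply the closed graph theorem to $L_a:b\mapsto\theta(ab)$, noting that the separating space of $L_a$ is contained in $\mathfrak S(\theta)$, so that $a\cdot\mathfrak S(\theta)=\{0\}$ implies the separating space of $L_a$ is trivial and hence $L_a$ is bounded; the symmetric argument handles $R_a$.

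Once this identification is in hand, closedness of $\mathscr I(\theta)$ is immediate, because the right-hand side is the joint annihilator of the fixed closed subspace $\mathfrak S(\theta)$ under the two actions $\A\times\X\to\X$ and $\X\times\A\to\X$, which are \emph{jointly} continuous precisely because $\X$ is a Banach $\A$-bimodule --- this is the only point at which the strengthening over Theorem \ref{theprop-applied} is used. An appeal to Theorem \ref{theprop-applied} then produces the continuity of $\theta$. The only nontrivial obstacle in the plan is the reverse inclusion in the identification of $\mathscr I(\theta)$, but this is a textbook closed-graph argument from the automatic continuity literature going back to Bade--Curtis and Ringrose, and requires no input specific to $(k,p,q)$-differential subalgebras.
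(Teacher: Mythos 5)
Your proposal is correct and follows the paper's own route: the corollary is obtained by feeding Theorem \ref{theprop-applied} the observation, already recorded after the definition of the continuity ideal, that $\mathscr I(\theta)$ is automatically closed when the target is a genuine Banach $\A$-bimodule; you simply make explicit the standard separating-space proof of that closedness (via the identification of $\mathscr I(\theta)$ with the two-sided annihilator of $\mathfrak S(\theta)$), which the paper leaves implicit. One small repair is needed in your reverse inclusion: from $\mathfrak S(L_a)\subseteq\mathfrak S(\theta)$ and $a\cdot\mathfrak S(\theta)=\{0\}$ one only gets $a\cdot\mathfrak S(L_a)=\{0\}$, not $\mathfrak S(L_a)=\{0\}$; the correct intermediate step is the stability lemma $\mathfrak S(L_a)=\mathfrak S(M_a\circ\theta)=\overline{a\cdot\mathfrak S(\theta)}$, where $L_a=M_a\circ\theta+T_a$ with $M_a\colon\xi\mapsto a\xi$ and $T_a$ continuous --- this is indeed the textbook fact you are gesturing at, and with it the argument closes as you describe.
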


\begin{cor}\label{thecor-amenable}
    Let $\A$ be an amenable $(k,p,q)$-differential subalgebra. Let $\X$ be a Banach $\A$-bimodule and $\theta:\A\to\X$ an $\A$-intertwining operator. Then $\theta$ is continuous. 
\end{cor}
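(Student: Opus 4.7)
The plan is to reduce the statement to the previously established Corollary \ref{thecor-applied}, whose only hypothesis beyond $\A$ being a $(k,p,q)$-differential subalgebra is that every closed $^*$-ideal of finite codimension in $\A$ admits a bounded left approximate identity. If I can extract that consequence from amenability alone, the corollary follows immediately, since $\X$ is already a Banach $\A$-bimodule and the other structural hypotheses have been built into the definition.

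To verify the missing hypothesis, I would invoke the classical characterization (due to Helemskii, and recorded for instance in Runde's monograph on amenability) of closed two-sided ideals in amenable Banach algebras that possess bounded approximate identities: such an ideal $I \subset \A$ has a bounded approximate identity if and only if $I$ is weakly complemented in $\A$, meaning that the annihilator
$$
I^{\perp} = \{\varphi \in \A^{*} : \varphi|_{I} = 0\}
$$
admits a closed complement in $\A^{*}$. For a closed ideal $I$ of finite codimension, the canonical identification $I^{\perp} \cong (\A/I)^{*}$ shows that $I^{\perp}$ is finite-dimensional, and finite-dimensional subspaces of a Banach space are always complemented via a standard Hahn--Banach construction of a continuous projection onto them. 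Thus $I$ is weakly complemented, and the cited characterization produces a bounded two-sided approximate identity in $I$, which is in particular a bounded left approximate identity.

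With the hypothesis of Corollary \ref{thecor-applied} verified for the amenable $(k,p,q)$-differential subalgebra $\A$, the continuity of $\theta$ is an immediate application of that corollary. The only nontrivial ingredient is the cited Helemskii-type characterization of ideals with bounded approximate identities inside amenable algebras, which is the natural place to anticipate an obstacle; once it is quoted, everything else reduces to the elementary observation that finite-codimensional closed subspaces of Banach spaces are complemented.
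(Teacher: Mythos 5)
Your proposal is correct and matches the paper's intended argument: the paper justifies the amenable case by noting that condition (iv) of Theorem \ref{theprop} (bounded approximate identities in closed ideals of finite codimension) holds for amenable Banach algebras, citing exactly the Helemskii-type characterization via weak complementation that you invoke, and then the statement follows from Corollary \ref{thecor-applied}. Your explicit observation that finite-codimensional closed ideals are automatically weakly complemented is the right way to fill in the citation.
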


Finally, we also consider the specific case of derivations. In such a case, we can appeal to the recent results in \cite{flores2025der} to obtain the following two theorems. They show that finding dense $(k,p,q)$-differential subalgebras can be a useful tool to obtain results about automatic continuity for the ambient algebra.

\begin{thm}
    Let $\B$ be a Banach algebra that contains a $(k,p,q)$-differential subalgebra $\A$ as a Banach subalgebra. Let $D:\B\to\X$ be a derivation into a Banach $\B$-bimodule. Further suppose that $\A$ is unital and that ${\rm C^*}(\A)$ has no proper closed two-sided ideals of finite codimension. Then $D$ is continuous.
\end{thm}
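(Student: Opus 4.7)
The plan is to recognize $D$ as an $\A$-intertwining operator whose source module is $\B$ itself, and then to invoke Theorem \ref{theprop-simple-applied} directly, bypassing any explicit bootstrapping from continuity on $\A$ to continuity on $\B$.

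First, I would observe that the continuous inclusion $\A\hookrightarrow\B$, together with the continuity of multiplication in $\B$, makes $\B$ into a Banach $\A$-bimodule (with the actions $a\cdot b=ab$ and $b\cdot a=ba$ for $a\in\A$, $b\in\B$). Similarly, since $\X$ is a Banach $\B$-bimodule, the induced actions of $\A$ on $\X$ inherited via the inclusion are jointly continuous, so $\X$ is in particular a weak Banach $\A$-bimodule.

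Next, I would verify that $D:\B\to\X$ is an $\A$-intertwining operator. For each $a\in\A$ and every $b\in\B$, the Leibniz rule gives
$$
D(ab)-aD(b)=D(a)b,\qquad D(ba)-D(b)a=bD(a).
$$
Both right-hand sides depend continuously on $b\in\B$, since they amount to left/right multiplication in the Banach $\B$-bimodule $\X$ by the fixed element $D(a)\in\X$. This is precisely the intertwining condition from the definition.

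Finally, Theorem \ref{theprop-simple-applied} applies with $\X_1=\B$, $\X_2=\X$ and $\theta=D$: the hypotheses that $\A$ is a unital $(k,p,q)$-differential subalgebra and that $\mathrm{C}^*(\A)$ has no proper closed two-sided ideals of finite codimension are assumed, so the theorem directly yields the continuity of $D$. The only conceptual step — which I expect is essentially the content being drawn from \cite{flores2025der} — is the recognition that the Leibniz rule promotes a derivation on the ambient algebra $\B$ into an $\A$-intertwining operator with source module $\B$; after that observation, the automatic-continuity machinery already developed in the paper handles the remainder without any further work, and no density assumption on $\A\subset\B$ is needed.
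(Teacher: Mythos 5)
Your reduction of the statement to Theorem \ref{theprop-simple-applied} with $\X_1=\B$ is clean as far as it goes: $\B$ is indeed a Banach $\A$-bimodule via the continuous inclusion, $\X$ restricts to a (weak) Banach $\A$-bimodule, and the Leibniz rule does exhibit $D$ as an $\A$-intertwining operator. The gap is in the last step. The mechanism behind Theorem \ref{theprop-simple} (via Theorem \ref{theprop} and \cite{Ru96,Fl25}) is to show that the continuity ideal $\mathscr I(\theta)\subset\A$ is all of $\A$, so that in particular $1_{\A}\in\mathscr I(\theta)$ and the maps $\xi\mapsto\theta(1_{\A}\xi)$, $\xi\mapsto\theta(\xi 1_{\A})$ are continuous; continuity of $\theta$ is then read off from the identity $\theta(\xi)=\theta(1_{\A}\xi)$. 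This last identification requires $1_{\A}$ to act as the identity on $\X_1$. When $\X_1=\A$ (the only case in which Theorem \ref{theprop} itself asserts continuity) this is automatic, but for $\X_1=\B$ it amounts to requiring $1_{\A}b=b=b1_{\A}$ for every $b\in\B$, i.e.\ that the unit of $\A$ is a two-sided unit for the ambient algebra --- which is not among the hypotheses and does not follow from $\A$ being a Banach subalgebra. Concretely, take $\B=\A\oplus\mathfrak C$ with $\mathfrak C$ a Banach space equipped with the zero product and $\X$ a Banach $\B$-bimodule on which the $\A$-summand acts trivially: every linear map $\mathfrak C\to\X$ extends to a derivation on $\B$ that passes all of your verifications, yet discontinuous ones exist. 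So the intertwining machinery, applied as you apply it, only controls $b\mapsto D(ab)$ and $b\mapsto D(ba)$ for $a\in\A$ --- morally, the behaviour of $D$ on $1_{\A}\B 1_{\A}$ --- and your closing remark that ``no density assumption on $\A\subset\B$ is needed'' is exactly where the argument is incomplete.

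For comparison, the paper does not run this argument at all: it obtains the theorem by invoking the derivation-specific results of \cite{flores2025der}, and its surrounding discussion explicitly frames the result in terms of \emph{dense} $(k,p,q)$-differential subalgebras of the ambient algebra (density of a unital $\A$ in $\B$ forces $1_{\A}$ to be a unit for $\B$, which is precisely what closes the loop above). To repair your proof you would need either to add the hypothesis that $1_{\A}$ is an identity for $\B$ (e.g.\ via density), or to import from \cite{flores2025der} the bridge from continuity of $b\mapsto D(ab)$, $a\in\A$, to continuity of $D$ on all of $\B$.
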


\begin{thm}
Let $\A,\B,\mathfrak C$ be Banach algebras such that $\A$ is a $(k,p,q)$-differential subalgebra and such that the inclusions $\A\subset\B,\mathfrak C\subset {\rm C^*}(\A)$ are continuous. Then every derivation $D:\B\to \mathfrak C$ is continuous.
\end{thm}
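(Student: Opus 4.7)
The plan is to reduce the statement to the automatic continuity criterion developed in \cite{flores2025der} for derivations into subalgebras of a $C^*$-envelope, by verifying that $\A$, as a $(k,p,q)$-differential subalgebra, fulfills all the harmonic-analytic hypotheses that criterion demands.

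First, I would restrict $D$ to $\A$ and view it, via the continuous inclusion $\mathfrak C \hookrightarrow {\rm C^*}(\A)$, as a derivation $D|_\A : \A \to {\rm C^*}(\A)$ with ${\rm C^*}(\A)$ equipped with its natural $\A$-bimodule structure. All the structural properties established earlier in the paper then apply: symmetry (Theorem~\ref{symmetry}), local regularity (Theorem~\ref{locallyreg}), $^*$-regularity (Theorem~\ref{regularity}), the Wiener property (Theorem~\ref{wienerprop}, using the bounded approximate units available in any closed ideal of a $C^*$-algebra), and the smooth functional calculus of Theorem~\ref{DixBai}. These are precisely the ingredients identified in \cite{flores2025der} as sufficient to force continuity of $D|_\A$ and, crucially, to propagate that continuity to $D$ itself.

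Next I would analyze the separating space $\mathfrak S(D) = \{\, y \in \mathfrak C \mid \exists\, b_n \to 0 \text{ in } \B,\ D(b_n) \to y\,\}$, which is a closed $\B$-subbimodule of $\mathfrak C \subset {\rm C^*}(\A)$. Continuity of $D|_\A$ eliminates the contribution of $\A$-sequences to $\mathfrak S(D)$, and the combination of $^*$-regularity and the Wiener property for $\A$ should then imply that $\mathfrak S(D)$ is annihilated by every topologically irreducible $^*$-representation of ${\rm C^*}(\A)$. Since ${\rm C^*}(\A)$ is faithfully represented by the direct sum of such representations, we would conclude $\mathfrak S(D) = \{0\}$, and hence that $D$ is continuous.

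The main technical obstacle, and the reason the result must be imported from \cite{flores2025der} rather than proved directly from the earlier sections, is the bootstrapping from continuity on $\A$ to continuity on $\B$: the $\A$-norm is typically strictly stronger than the $\B$-norm restricted to $\A$, so a naive density argument does not suffice. The trick used in \cite{flores2025der} is to work entirely inside ${\rm C^*}(\A)$, where $C^*$-algebraic techniques become available --- bounded approximate identities in closed ideals, Kaplansky-type density, and in an essential way the stability of $\A$ under the $\mathfrak D_\tau$-functional calculus given by Theorem~\ref{DixBai}, which supplies the smooth cut-off functions needed to localize elements of $\mathfrak S(D)$ within irreducible representations. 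This is exactly where the $(k,p,q)$-differential structure on $\A$ is doing the work.
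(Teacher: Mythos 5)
Your overall strategy --- defer to the criterion of \cite{flores2025der} after checking that $(k,p,q)$-differential subalgebras enjoy the required harmonic-analytic properties --- coincides with the paper's, which in fact offers nothing beyond that citation together with a pointer to the structural theorems proved earlier. The difficulty lies in \emph{which} properties you claim to be available: two of the ingredients you feed into the reduction do not follow from the stated hypotheses, and this is a genuine gap.

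First, you invoke the Wiener property via Theorem \ref{wienerprop}, justified by ``the bounded approximate units available in any closed ideal of a $C^*$-algebra.'' This is a non sequitur: Theorem \ref{wienerprop} requires the Banach $^*$-algebra $\A$ itself to contain a self-adjoint bounded approximate identity, a hypothesis conspicuously absent from the theorem you are proving (and stated explicitly wherever the paper actually needs property $(W)$, e.g.\ Theorem \ref{theprop-applied} and Corollary \ref{thecor-applied}); approximate units in ideals of ${\rm C^*}(\A)$ say nothing about $\A$. Second, you take for granted that $D|_\A$ is continuous and then try to propagate. Within this paper, continuity of a derivation out of $\A$ is only obtained under extra assumptions --- either that all finite-codimensional closed $^*$-ideals of $\A$ have bounded left approximate identities (Corollary \ref{thecor-applied}), or that $\A$ is unital and ${\rm C^*}(\A)$ has no proper finite-codimensional closed two-sided ideals (Theorem \ref{theprop-simple-applied}) --- and neither is assumed here. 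Even granting continuity of $D|_\A$ in the $\A$-norm, it would not ``eliminate the contribution of $\A$-sequences'' to the separating space, which is built from sequences tending to zero in the weaker $\B$-norm. Since the theorem is asserted for an arbitrary $(k,p,q)$-differential subalgebra, the criterion imported from \cite{flores2025der} must rest only on the unconditional properties --- symmetry (Theorem \ref{symmetry}), local regularity (Theorem \ref{locallyreg}), $^*$-regularity (Theorem \ref{regularity}) and the $\mathfrak D_\tau$-functional calculus (Theorem \ref{DixBai}) --- so your sketch, as written, establishes at best a version of the statement carrying additional hypotheses.
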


When applied to a differential subalgebra, the above theorem recovers \cite[Theorem 13.1(ii)]{KS94}, and, in the particular case of domains of closed $^*$-derivations (see Subsection \ref{derivations}), it recovers Longo's relative boundedness result \cite{Lo79}.

\section*{Acknowledgments}

The author thankfully acknowledges support by the NSF grant DMS-2144739. He is also grateful to Professor Ben Hayes for the interesting discussions surrounding the present topic.

\printbibliography

\bigskip
\bigskip
ADDRESS

\smallskip
\smallskip
Felipe I. Flores

Department of Mathematics, University of Virginia,

114 Kerchof Hall. 141 Cabell Dr,

Charlottesville, Virginia, United States

E-mail: hmy3tf@virginia.edu
\end{document}